\documentclass[a4paper,english]{jnsao}
\usepackage[utf8]{inputenc}
\usepackage{csquotes}
\usepackage{babel}
\usepackage{tikz}
\usepackage{enumitem}
\usepackage{comment}
\usepackage{mathtools}
\usepackage{mdframed}
\usepackage{algpseudocode,algorithm,algorithmicx}
\usepackage{nicecleveref}
\usepackage[textsize=footnotesize,color=DarkOrange!40]{todonotes}
\usepackage{float}
\usepackage{grffile}
\usepackage[section]{placeins}
\manuscriptcopyright{}
\manuscriptlicense{}
\algnewcommand\algorithmicforalll{\textbf{for all}}
\algdef{S}[FOR]{ForAll}[1]{\algorithmicforalll\ #1\ \algorithmicdo}

\newcommand{\dotp}[2]{\langle #1, #2 \rangle}
\mdfdefinestyle{examplebg}{
    backgroundcolor=black!7,%
    hidealllines=true,%
    innertopmargin=-.3em,%
    innerbottommargin=.7em,%
    innerleftmargin=.7em,%
    innerrightmargin=.7em,%
}

\crefname{condition}{Condition}{Conditions}
\theoremstyle{definition}
\newtheorem{assumption}[definition]{Assumption}

\surroundwithmdframed[style=examplebg]{example}
\makeatletter
\crefname{ALC@unique}{line}{lines}
\makeatother

\let\citep\cite

\newcommand{\term}{\emph}

\newcommand{\field}[1]{\mathbb{#1}}
\newcommand{\N}{\mathbb{N}}

\newcommand{\R}{\field{R}}
\newcommand{\extR}{\overline \R}

\newcommand{\norm}[1]{\|#1\|}

\newcommand{\abs}[1]{|#1|}

\newcommand{\inv}[1]{#1^{-1}}

\newcommand{\Union}\bigcup
\newcommand{\Isect}\bigcap
\newcommand{\union}\cup
\newcommand{\isect}\cap
\newcommand{\bigunion}\bigcup
\newcommand{\bigisect}\bigcap

\newcommand{\defeq}{:=}

\newcommand{\downto}{\searrow}
\newcommand{\upto}{\nearrow}

\newcommand{\M}{{M}}
\newcommand{\Imeas}{\mathscr{I}}

\DeclareMathOperator{\Dom}{dom}

\DeclareMathOperator{\TV}{TV}
\DeclareMathOperator{\GSBV}{GSBV}
\DeclareMathOperator{\SBV}{SBV}
\DeclareMathOperator{\BV}{BV}
\DeclareMathOperator{\Lmeasof}{\mathscr{L}}

\DeclareMathOperator{\A}{\mathcal{A}}
\DeclareMathOperator{\Hs}{\mathcal{H}}

\makeatletter
\def \uminus@sym{\setbox0=\hbox{$\cup$}\rlap{\hbox 
        to\wd0{\hss\raise0.5ex\hbox{$\scriptscriptstyle{-}$}\hss}}\box0}
    \def \uminus    {\mathrel{\uminus@sym}}
\makeatother

\makeatletter
\def \weaktostar@sym{\setbox0=\hbox{$\rightharpoonup$}\rlap{\hbox 
        to\wd0{\hss\raise1ex\hbox{$\scriptscriptstyle{*\,}$}\hss}}\box0}
    \def \weaktostar    {\mathrel{\weaktostar@sym}}
\makeatother

\def\BoundMeas{S}
\def\BoundMeasOf{\mathscr{H}^{N-1}}
\def\tBoundMeas{\tilde{S}}
\def\Lmeas{x}
\def\tLmeas{\tilde{x}}

\def\WM{W}

\def\extR{\overline \R}

\def\ee{\mathscr{e}_k}

\DeclareFontFamily{U}{mathx}{\hyphenchar\font45}
\DeclareFontShape{U}{mathx}{m}{n}{<-> mathx10}{}
\DeclareSymbolFont{mathx}{U}{mathx}{m}{n}

\DeclareMathOperator{\dist}{dist}
\DeclareMathOperator{\prox}{prox}

\def\ddd{\,d}

\renewcommand{\prox}[3]{\mathrm{prox}_{#1#2} \left(#3 \right) }

\def\XXint#1#2#3{{\setbox0=\hbox{$#1{#2#3}{\int}$ }
\vcenter{\hbox{$#2#3$ }}\kern-.6\wd0}}


\renewrobustcmd{\downto}{{{\mathchoice%
            {\rotatebox[origin=c]{-20}{$\to$}}
            {\rotatebox[origin=c]{-20}{$\to$}}
            {\rotatebox[origin=c]{-20}{\scalebox{0.75}{$\to$}}}
            {\rotatebox[origin=c]{-20}{\scalebox{0.6}{$\to$}}}
}}}

\renewrobustcmd{\upto}{{{\mathchoice%
            {\rotatebox[origin=c]{20}{$\to$}}
            {\rotatebox[origin=c]{20}{$\to$}}
            {\rotatebox[origin=c]{20}{\scalebox{0.75}{$\to$}}}
            {\rotatebox[origin=c]{20}{\scalebox{0.6}{$\to$}}}
}}}


%

\newcommand{\subfloatrecoii}[4]{\subfloat[#1]{\hspace*{#2}\includegraphics[width=#3, trim=70 70 70 70,clip]{#4}\hspace*{#2}}}%
\newcommand{\subfloatrecoi}[3]{\subfloatrecoii{#1}{0.18cm}{#2}{#3}}%
\newcommand{\subfloatrecoiib}[4]{\subfloat[#1]{\hspace*{#2}\includegraphics[width=#3, trim=60 60 60 60,clip]{#4}\hspace*{#2}}}%
\newcommand{\subfloatrecoib}[3]{\subfloatrecoiib{#1}{0.18cm}{#2}{#3}}%
\newcommand{\subfloatcolorbar}[4]{\hspace{#1}\vtop{\vskip#2\hbox{\includegraphics[width=#3,keepaspectratio,trim={0px 0px 0px 0px},clip]{#4}}}}%

\newcommand*{\textlabel}[2]{%
  \edef\@currentlabel{#1}
  \phantomsection
  #1\label{#2}
}
\newcommand{\subcstageb}[9]{%
    \subfloat[#1\vspace{-0.2cm}]%
    {%
        \label{#2}
        \begin{minipage}{#3}%
            \includegraphics[width=1\linewidth, trim=#6 #7 #8 #9,clip]{#4}\\%
            \includegraphics[width=1\linewidth, trim=#6 #7 #8 #9,clip]{#5}%
        \end{minipage}%
    }%
}%
\newcommand{\subcstagee}[6]{%
    {%
        \begin{minipage}{0.18\linewidth}%
            \subfloat[#1\vspace{-0.2cm}]{%
            \includegraphics[width=1\linewidth, trim=130 130 130 130,clip]{#3}}\\%
            \includegraphics[width=1\linewidth, trim=130 130 130 130,clip]{#4}\\%
            \subfloat[#2\vspace{-0.2cm}]{
            \includegraphics[width=1\linewidth, trim=80 80 80 80,clip]{#5}}\\%
            \includegraphics[width=1\linewidth, trim=80 80 80 80,clip]{#6}%
        \end{minipage}%
    }%
}%
\newcommand{\subcstagef}[7]{%
    \subfloat[#1]%
    {%
        \begin{minipage}{0.22\linewidth}%
            \includegraphics[width=1\linewidth, trim=80 80 80 80,clip]{#2}\\%
            \includegraphics[width=1\linewidth, trim=80 80 80 80,clip]{#3}\\%
            \includegraphics[width=1\linewidth, trim=80 80 80 80,clip]{#4}\\%
            \includegraphics[width=1\linewidth, trim=80 80 80 80,clip]{#5}\\%
            \includegraphics[width=1\linewidth, trim=80 80 80 80,clip]{#6}\\%
            \includegraphics[width=1\linewidth, trim=80 80 80 80,clip]{#7}%
        \end{minipage}%
    }%
}%


\def\tabitem{~~\llap{\textbullet}~~}
\def\AreaMSCon{$7.58$}
\def\AreaMSRes{$28.15$}
\def\AreaTVCon{$21.28$}
\def\AreaTVRes{$31.36$}
\def\AreaTVbCon{$54.35$}
\def\AreaTVbRes{$25.18$ }
\def\AreaRealCon{$7.21$}
\def\AreaRealRes{$29.13$}

\author{
    Jyrki Jauhiainen\thanks{Department of Applied Physics, University of Eastern Finland, Kuopio, Finland. \email{jyrki.jauhiainen@uef.fi}}
    \and
    Tuomo Valkonen\thanks{ModeMat, Escuela Politécnica Nacional, Quito, Ecuador \emph{and} Department of Mathematics and Statistics, University of Helsinki, Finland. \email{tuomo.valkonen@iki.fi}}
    \and
    Aku Seppänen\thanks{Department of Applied Physics, University of Eastern Finland, Kuopio, Finland. \email{aku.seppanen@uef.fi}}
}
\title{Mumford-Shah regularization in electrical impedance tomography with complete electrode model}
\shortauthor{Jauhiainen, Valkonen, and Seppänen}
\shorttitle{Mumford–Shah regularization in electrical impedance tomography with CEM}

\acknowledgements{%
    This work was supported by the Academy of Finland, Centre of Excellence of Inverse Modelling and Imaging, 2018–2025 (project 336795).
    T.~Valkonen was supported by the Escuela Politécnica Nacional internal grant PIIF-20-01 as well as the Academy of Finland grant 314701.
}

\begin{document}
\maketitle

\begin{abstract}
    In electrical impedance tomography, we aim to solve the conductivity within a target body
    through electrical measurements made on the surface of the target. This inverse conductivity problem is severely ill-posed, especially in real applications with only partial boundary data available. Thus regularization has to be introduced. Conventionally regularization promoting smooth features is used, however, the Mumford–Shah regularizer familiar for image segmentation is more appropriate for targets consisting of several distinct objects or materials.
    It is, however, numerically challenging.
    We show theoretically through $\Gamma$-convergence that a modification of the Ambrosio–Tortorelli approximation of the Mumford–Shah regularizer is applicable to electrical impedance tomography, in particular the complete electrode model of boundary measurements.
    With numerical and experimental studies, we confirm that this functional works in practice and produces higher quality results than typical regularizations employed in electrical impedance tomography when the conductivity of the target consists of distinct smoothly-varying regions.
\end{abstract}

\section{Introduction}
\label{sec:intro}

Electrical impedance tomography (EIT) is an imaging modality where the electrical conductivity of a target body is inferred from electrical boundary measurements. This problem is often called the \emph{inverse conductivity problem} or \emph{Calderon's problem} \cite{calderon1980inverse,calderon2006inverse}. In abstract terms, Calderon's problem is to determine the conductivity of the target from a Dirichlet to Neumann map $\Lambda_\gamma: H^{1/2}(\partial \Omega) \to H^{-1/2}(\partial \Omega)$, $\Lambda_\gamma f = \gamma \tfrac{\partial u }{\partial \nu}\rvert_{\partial \Omega}$. Parametrized by the conductivity $\gamma$ within the domain $\Omega \subset \R^N$, the latter maps the electrical potentials at the boundary $\partial \Omega$ to electrical currents through the boundary. Inside the domain $ \Omega$, the electric potentials $u$ and the conductivity $\gamma$ are governed by the elliptic partial differential equation (PDE)
\begin{equation}
    \label{eq:calderon}
    \begin{cases}
        \nabla \cdot \gamma \nabla  u = 0 & 
        x \in \Omega\\
        \quad \quad\;\; u = f& 
        x \in \partial \Omega.
    \end{cases}
\end{equation}

\begin{figure}[!tbp]
    \centering
    \includegraphics[width=1.0\linewidth]{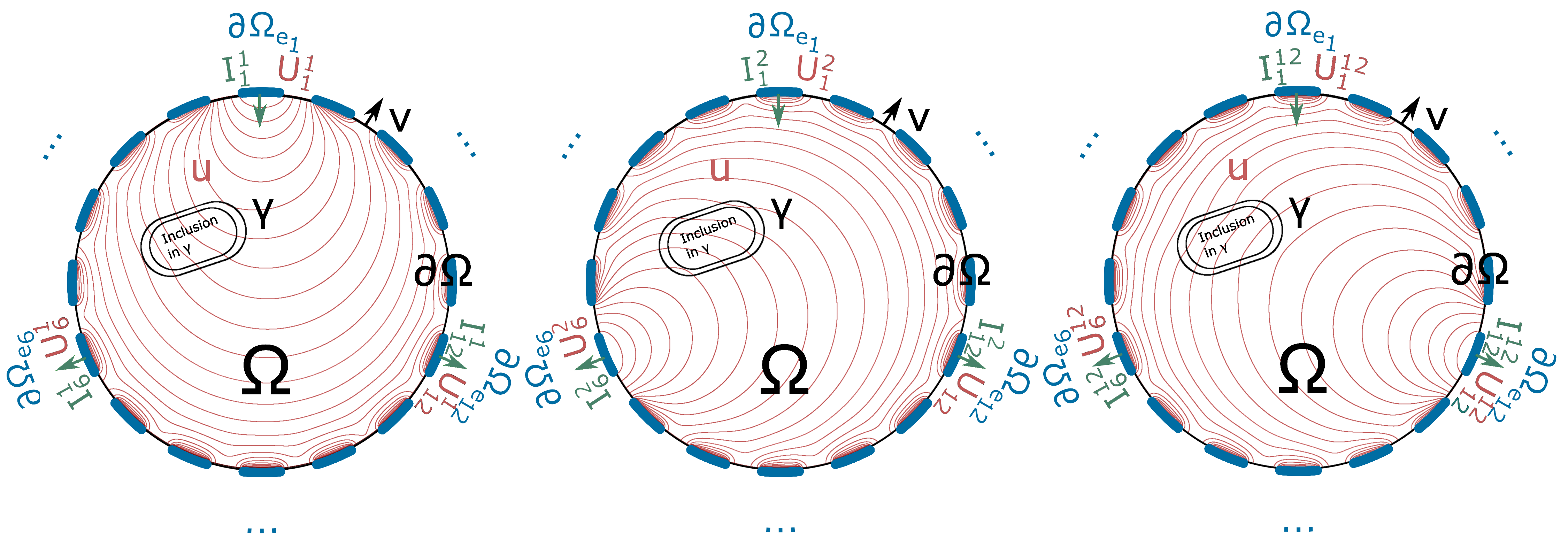}
    \caption{Illustration of the EIT measurement setup. The electrodes are sequentially set to potentials $U^j_k$ and the currents $I^j_k$ caused by the potential $u$ inside the domain $\Omega$ are measured. From the measured currents $I^j_k$, the conductivity $\gamma$ inside the domain is reconstructed. The vector $\nu$ is the outward normal of the boundary $\partial \Omega$. }
    \label{fig:EIT_illu}
\end{figure}

In practice, EIT measurements are collected using finite-sized electrodes, modelled by $\partial \Omega_{e_k}$, on the surface of the target body, performing current injections and measuring the potential differences due to the current injection. This is the more common approach. An alternate approach, which we employ in this paper and that has previously been used in, e.g., \cite{voss2018imaging,voss2019three,jauhiainen2020non,ripgn}, is to measure the currents  $I^j=(I^j_1,I^j_2,\dots,I^j_{P_1})$ caused by the potential excitations $U^j = (U^j_1,U^j_2\dots,U_{P_1}^j)$; see \Cref{fig:EIT_illu}. Usually, multiple sets of potential excitations $U^j$ and current measurements $I^j$, $j=1,\dots, P_2$, are carried out.
To date, the most physically relevant model, adapting \eqref{eq:calderon} to realistic boundary measurements, is the so-called Complete Electrode Model (CEM) \cite{cheng1989electrode}
\begin{subequations}\label{eq:CEM}
    \begin{alignat}{4}
        \label{eq:CEM1}
            \nabla \cdot (\gamma(x) \nabla u^j(x)) &=0  &&
            \quad\text{for } x\in \Omega, \\
        \label{eq:CEM2}
            u^j(x) + \zeta_{k} \gamma(x)\dotp{\nabla u^j(x)}{\nu} &= U^j_{k}   &&
            \quad\text{for } x\in \partial \Omega_{e_{k}}\text{,}\;\; k=1,\dots,P_1, \\
        \label{eq:CEM3}
            \int_{\partial \Omega_{e_k}} \gamma(x) \dotp{\nabla u^j(x)}{\nu}\: \ddd\BoundMeas &= -I^j_{k} &&
            \quad\text{for } k=1,\dots,P_1,\\
            \label{eq:CEM4} 
            \gamma(x)\dotp{\nabla u^j(x)}{\nu} &=0   &&
            \quad\text{for } x\in \partial \Omega \setminus (\partial \Omega_{e_1} \union \ldots \union \partial \Omega_{e_{P_1}}),
    \end{alignat}
\end{subequations}
where $\zeta_{k}$ is the contact impedance of an electrode $k$, i.e. the impedance caused by the interface between the electrode and the medium of the target, and $\nu$ is a unit normal pointing out of $\Omega$. Typically, a weak formulation is employed.

Due to the ill-posedness of the inverse conductivity problem, regularization methods \cite{engl2000regularization} need to be employed to obtain a solution with desired characteristics.
While direct approaches such as Dbar \cite{mueller2012linear} exist, we concentrate on variational regularisation, which incolves an explicit regularizer $F$ constructed to promote desired solution features. The corresponding minimization problem needs to be solved with iterative optimization methods.
To form the variational model, let $\Imeas = (\Imeas^1,\dots,\Imeas^{P_2})\in \R^{\M}$ be the measured currents during multiple different potential excitations $U^1,\ldots,U^{P_2} \in \R^{P_1}$.
It may be that $\M < P_1P_2$ if the measurement device does not measure all the currents $\Imeas_k^j$ during an excitation $j$. 
Let $I(\gamma) \in \R^{\M}$ be the corresponding electric currents $I_k^j$ obtained by solving \eqref{eq:CEM} for given conductivity $\gamma$. Our variational problem is to find $\hat \gamma$ solving
\begin{equation}
    \label{eq:minFG1}
    \min_{\gamma_m \le \gamma \le \gamma_M} F(\gamma) + G(\gamma),
\end{equation}
where $\gamma_m$ and $\gamma_M$ are the bounds for positive and finite electrical conductivity: $F$ is the regularization functional, and the data fidelity is
\begin{equation}\label{eq:G1}
    G(\gamma) = \tfrac{1}{2a}\norm{\WM(I(\gamma)-\mathscr I)}_{2}^2.
\end{equation}
We write $a > 0$ for the regularization parameter and include the weight matrix $\WM \in \R^{M\times M}$ to account for the noise of each measured current.
    
The choice of regularization functional $F$ has a significant impact on the solution of the inverse EIT problem, e.g., the squared norm of the gradient will impose smooth solutions. However, in practice the target often contains several smoothly-varying materials with distinct edges. An example of such an application is the imaging of concrete structures; the metallic reinforcements \cite{karhunen2010a} and cracks \cite{karhunen2010b} are sharp features while the moisture distribution is spatially smooth \cite{smyl2016}. Another example application is in the industrial process tomography; phase boundaries and diffusive processes, respectively, cause sharp and smooth variations in the conductivity.
Total variation (TV) regularization, i.e., $1$-norm of the (distributional) image gradient, allows sharp edges between materials, but generally suffers from the stair-casing effect: it imposes piecewise constant solutions everywhere.
Total generalized variation (TGV) \cite{bredies2009tgv,sampta2011tgv} can be used to avoid the stair-casing effect while maintaining other desirable characteristics of TV \cite{tuomov-jumpset2,l1tgv}. It has been applied to EIT in \cite{shi2019reduction}.
In practice, however, neither TV nor TGV may not sufficiently well recover edges and distinct objects from very incomplete data.

The Mumford–Shah (M-S) regularizer \cite{mumford1989optimal}, familiar from image segmentation, promotes a small number of distinct smoothly-varying objects with sharp edges to a much higher extent than TV or TGV do. It does this by only penalising the length of object edges instead of the height of the edges.
It is defined by
\begin{equation}
    \label{eq:msf}
    F(\gamma) = \norm{\abs{ \nabla \gamma}}_2^2 + \alpha \mathscr{H}^{N-1}(S_\gamma).
\end{equation}
in the space of (generalized) special functions of bounded variation \cite{ambrosio2000functions}.
This space concerns functions that admit an approximate differential $\nabla \gamma$ outside a \term{jump set} or the \term{approximate discontinuity set}  $S_\gamma$.
Controlled by the parameter $\alpha > 0$, in \eqref{eq:msf} we penalize the length ($N=2$) or area ($N=3$) of the jump set using the Hausdorff measure $\mathscr{H}^{N-1}$ of dimension $N-1$.

For more general settings, Ambrosio and Tortorelli \cite{ambrosio1990approximation} showed that as $k \upto \infty$, the functionals
\begin{equation}
    \label{eq:apmsf}
    F_k(\gamma,z) = \int_\Omega \left( (\abs{\nabla \gamma}^2 + \abs{\nabla z}^2)(1-z^2)^{2k} + \frac{1}{4}(\alpha k z)^2\right) \ddd\Lmeas
    \quad (k > 0)
\end{equation}
approximate $F$ defined in \eqref{eq:msf} in the sense of $\Gamma$-convergence with the underlying topology given by convergence in measure.
We refer to \cite{braides2002gamma} for an introduction to $\Gamma$-convergence.
The variable $z$ in \eqref{eq:apmsf} plays the role of a control variable for the gradient of $\gamma$ and the minimization is done with respect to the both variables $\gamma$ and $z$; that is, the approximate problem is
\begin{equation}
    \label{eq:minFGappI}
    \min_{\gamma,z} G(\gamma) + F_k(\gamma,z).
\end{equation}

The question then is, do the solutions $\gamma_k$ of the approximate problems \eqref{eq:minFGappI} converge to a solution $\gamma$ of the original problem \eqref{eq:minFG1}?
For continuous $G$, the $\Gamma$-convergence of $F_k$ readily implies that of $G+F_k$. If $\{G+F_k\}_{k \in \N}$ are “equi-mild coercive”, it is also possible to show that the solutions converge; see \cite{braides2002gamma}. In \cite{ambrosio1990approximation}, Ambrosio and Tortorelli showed directly for the denoising data term $G(\gamma) = \int_\Omega \abs{\gamma - \gamma^M}^p \ddd\Lmeas$ with $\gamma^M \in L^\infty(\Omega)$ that solutions to \eqref{eq:minFGappI} approximate solution to the problem
\begin{equation}\label{eq:minFGI}
    \min_\gamma G(\gamma) + F(\gamma)
\end{equation}
More general data terms, such as \eqref{eq:G1}, were not treated.
In \cite{ambrosio1992approximation}, Ambrosio and Tortorelli briefly discussed their treatment, and showed the $\Gamma$-convergence of the alternative approximating functionals
\begin{equation}\label{eq:FeA}
    F_\lambda(\gamma,z) 
    = \int_\Omega \left[\lambda \abs{\nabla z}^2 + \alpha ( z^2 + o_\lambda)\abs{\nabla \gamma}^2 + \frac{(z-1)^2}{4\lambda}\right] \ddd\Lmeas,
\end{equation}
where the approximation takes place as $o_\lambda \to 0$, $\lambda \to 0$ and $\gamma\to 0$ in $L^2$ topology. The benefit of this functional is that it simplifies the numerical implementation.
In contrast to the original approximation \eqref{eq:apmsf}, for which \eqref{eq:minFGappI} may have no solutions, the $o_\lambda$ term in \eqref{eq:FeA} together with the $L^2$-coercivity of $G$ guarantees the existence of solutions to the approximating problems. The approach, however, still has one difficulty with application to the EIT problem: $G$ given by \eqref{eq:G1} can only be proven to be continuous if $\gamma_m \le \gamma \le \gamma_M$ for some constants $\gamma_M > \gamma_m > 0$, whereas the $\Gamma$-convergence proofs of \cite{ambrosio1990approximation,ambrosio1992approximation} specifically depend on $\gamma$ being equal to zero on subdomains. We will, therefore, further need to adapt those proofs.

The Mumford–Shah regularizer and Ambrosio–Tortorelli approximation have previously been studied for the \emph{continuum model} of EIT in \cite{rondi2001enhanced}. In this work, a very strict “Q-property” is imposed on the conductivities $\gamma$ and the aforementioned difficulty with the $\Gamma$-convergence proofs regarding the continuity of $G$ when $\gamma=0$ is circumvented by replacing $G(\gamma)$ by $G(\hat \gamma)$ for a suitably “forced” $\hat\gamma$, and introducing an additional $L^2$ penalty.
Moreover, a drawback of the continuum model of EIT compared to CEM is that it models neither the electrodes nor the contact impedances. In \cite{huska2020spatially}, a reconstruction approach was proposed based on CEM and a regularizer with close appearances to the Ambrosio–Tortorelli functional. However, this approach is based on the  linearization of $G$ and the control variable $z$ is to be obtained \emph{a priori}, either from example from a photograph, or from an initial reconstruction. As such, an asymptotic theory, and the theoretical properties of the Mumford–Shah functional are not available to judge theoretical reconstruction qualities. Nevertheless, the numerical studies suggest that the proposed method outperforms TV.


Optimization problems involving the Mumford-Shah regularizer \eqref{eq:msf} are in general very challenging due to a high level of nonsmoothness and nonconvexity.
Effective algorithms have been developed for the restriction to piecewise constant functions, also known as the Potts model
\cite{geman1984stochastic,storath2014jump-sparse,storath2015joint,tuomov-nlpdhgm-general}.
For separating two objects, the Chan–Vese convex relaxation \cite{chan2001active} can also be efficiently solved.
Moreover, in \cite{hohm2015algorithmic} the Alternating Directions Method of Multipliers (ADMM) is applied to Mumford–Shah regularized problems with non-linear forward operators. Through a regular finite differences discretisation, the generally expensive ADMM subproblems become a series of one-dimensional Mumford–Shah problems that can be solved efficiently. However, they require that the data term satisfies the coercivity assumption $G(\gamma) \to \infty$ as $\gamma \to \infty$. This is not the case for the EIT/CEM data term. Moreover, the finite differences discretisation is severely limiting when the forward operator involves PDEs on non-rectangular domains.

The aim of this paper is to apply the Ambrosio–Tortorelli approximation \eqref{eq:apmsf} of the Mumford–Shah regularization functional to the complete electrode model of EIT, solving for the control variable $z$ simultaneously with the conductivity $\gamma$.
In \cref{ssec:Gconv}, we show the $\Gamma$-convergence and the convergence of solutions for the approximate EIT problems \eqref{eq:minFGappI}. To ensure the continuity of CEM and the existence of solutions to the approximating problems \eqref{eq:minFGappI}, we will modify $F_k$ slightly by imposing constraints $\gamma_m \le \gamma \le \gamma_M$ and $0 \le z \le 1-\epsilon_k$. The constraint $z \le 1-\epsilon_k$ serves a similar purpose as $o_\lambda$ in \eqref{eq:FeA}. Before we embark on proving $\Gamma$-convergence, we first show in \cref{sec:BP} the continuity of $G$ in measure when $\gamma$ is bounded away from zero.
We finish in \cref{sec:numerical} by evaluating the practical performance of the approach numerically and experimentally. \Cref{table:Symbols} shows explains the notation used in the manuscript.

To keep the length of this manuscript manageable, we do not discuss the $\Gamma$-convergence of finite element approximations to the original problem or the function space Ambrosio–Tortorelli approximations.
Discretisation and application of the Mumford–Shah functional to piecewise constant (Potts model) and piecewise affine functions are studied in \cite{gobbino1998finite,chambolle1999finite,chambolle1999discrete,ramlau2010regularization,klann2013regularization,kiefer2019efficient,foare2019semi}. Of these works, \cite{ramlau2010regularization,klann2013regularization} also discuss applications to computerized and single photon emission tomography. Further in the theme of applications of the Mumford–Shah functional to inverse problems, a level-set method is presented in \cite{ramlau2007mumford} for X-ray tomography. Finally, \cite{weinmann2016mumford} study algorithms for discretised Mumford–Shah regularisation with manifold-valued data.

\begin{table}
    \caption{Common symbols used in the manuscript. }
    \label{table:Symbols}
    \centering
    \footnotesize
    \begin{tabular}{ r l r l}
        Symbol & Explanation & Symbol & Explanation \\
        \hline
        \hline
        $\gamma$ & Electrical conductivity & $\Omega$ & Domain representing the monitored object\\
        $u$ & Electric potential inside $\Omega$ & $\zeta_k$ & Contact impedance of the electrode $k$ \\
        $U_k^j$ & Potential of the electrode $k$ at excitation $j$  & $I_k^j$& Current through the electrode $k$ at excitation $j$ \\
        $P_1$ &Number of excitations  & $P_2$& Number of electrodes \\
        $\mathscr{I}$ & Vector of measured currents & $W$& The weight matrix of the measurements \\
        $F$ & M-S functional with $\gamma$ constraints & $G$ & Data fidelity term \\ 
        $\mathscr{H}$ &  Hausdorff measure & $S_\gamma$ & Jump set of $\gamma$ \\ 
        $\mathscr{B}$ & Space of Borel functions & $\alpha$ & Jump set regularization parameter\\ 
        $z$ & Auxiliary jump set control variable & $k,\lambda$ & Jump set control parameter \\ 
        $F_k$, $F_\lambda$ & A-T approximation of M-S functional & $\bar F_k$ & Modified A-T functional\\ 
        $D_{k,N}$ & Space of the functionals $F_k$ & $\bar D_{k,N}$ & Space of the functionals $\bar F_k$\\
        $\gamma_m,\gamma_M$ & The conductivity constraints & $\epsilon_k$ & An additional constraint variable for $z$ \\
        $H$ & Hilbert space & $\Hs$ & Weak solution space of CEM \\ 
        $B$ & Bilinear form associated with CEM & $L$ & Linear form associated with CEM \\ 
        $\norm{\cdot}^2_{\zeta*}$ & Norm associated with $\Hs$ & $\Lmeasof$ & Lebesgue measure \\
        $G_h$ & Data term of the FE approximation of CEM & $a$ & Typical regularization parameter \\
        $N_n$ & Number of nodes in the FE mesh& $N_e$ & Number of nodes in the FE mesh \\
        $\phi$ & Linear FE basis function & $\delta$ & Indicator function \\
        $f^*$ & Convex conjugate of $f$ & $\text{prox}$ & Proximal mapping \\
        $t^i,s^i$ & Step parameters of the NLPDPS & $w,\beta$ & Step parameters of the RIPGN\\ 
    \end{tabular} 
\end{table}
\section{Basic properties}
\label{sec:BP}

In this section, we study basic properties of the EIT data term $G$ given by \eqref{eq:G1}, as well as the corresponding approximation problems \eqref{eq:minFGappI}. Specifically, we show that $G$ is continuous in the topology of convergence in measure, as long as $0 <\gamma_m \le \gamma_M < \infty$  almost everywhere. This readily follows from the continuity of each of the individual the currents $I^j$, ($j=1,\ldots,P_2$). Without loss of generality, we therefore concentrate on $P_2=1$, and for brevity drop the measurement setup indicator $j$ from the potentials and currents $u^j$, $I^j$, and $U^j$.

\subsection{The complete electrode model}
\label{ssec:CEM}

We work with the weak formulation of the PDE \eqref{eq:CEM}.
Given a domain $\Omega \subset \R^N$, we define the space $\Hs$ of weak solutions, consisting of both the inner potential $u$ and the currents $I$, as
\[
    \Hs(\Omega)\defeq H^1(\Omega) \oplus \R^{P_1}.
\]
We equip this space with the norm
\begin{equation}\label{eq:Hnorm}
    \norm{(v,V)}^2_{\Hs}\defeq  \norm{v }_{H^1}^2 + \norm{V}_{2}^2\quad \left((v,V) \in \Hs\right),   
\end{equation}
where $\norm{\cdot}_{H^1}$ is the natural norm of the Hilbert space $H^1(\Omega)$.
Now, following \cite{VossThesis2020}, see also \cite{jauhiainen2020non}, a weak solution $(u,I)$ to \eqref{eq:CEM} is characterized by
\begin{equation}\label{eq:bilin}
    B_\gamma((u,I),(v,V)) = L(v,V), \quad \left((v,V) \in \Hs\right),
\end{equation}
where
\begin{equation}\label{eq:B}
    \begin{aligned}[t]
        B_\gamma((u,I),(v,V))&
        = \int_{\Omega} \gamma \dotp{\nabla u}{ \nabla v} \ddd\Lmeas + \sum_{k=1}^{P_1} \tfrac{1}{\zeta_k}\int_{\partial \Omega_{e_k}} uv d \BoundMeas
        \\
        \MoveEqLeft[-1]
         - \sum_{k=1}^{P_1} \tfrac{1}{\zeta_k}\int_{\partial \Omega_{e_k}} uV_k d \BoundMeas + \sum_k^{P_1} I_kV_k,
    \end{aligned}
\end{equation}
and
\begin{equation*}
    L(v,V) = \sum_{k=1}^{P_1} \tfrac{1}{\zeta_k} \int_{\partial \Omega_{e_k}} U_k(v-V_k) \ddd\BoundMeas.
\end{equation*}
The well-posedness of this formulation has been shown in \cite{somersalo1992existence}; see also \cite{jauhiainen2020non}.
Here and throughout, we write $dS$ for integration on the boundary of the relevant domain with respect to the $(N-1)$-dimensional Hausdorff measure. Likewise $dx$ denotes integration with respect to the $N$-dimensional Lebesgue measure $\Lmeasof$.

In the next subsection, we will analyze under what conditions the currents $I$ obtained from \eqref{eq:bilin} are continuous in measure. For this, it will be convenient to define some additional norms besides \eqref{eq:Hnorm}. For $\zeta=(\zeta_1,\ldots,\zeta_{P_1}) \in (0,\infty)$, we define
\[
    \norm{(v,V)}^2_{\zeta*}\defeq \norm{\nabla v}_{2} ^2 + \norm{v}^2_{\partial \Omega_e\zeta} + \norm{V}^2_{2},
    \quad \text{with}\quad 
    \norm{v}^2_{\partial \Omega_e\zeta} \defeq \sum_{k=1}^{P_1} \tfrac{1}{\zeta_k}\int_{\partial \Omega_{e_k}} v^2 d \BoundMeas
\]
with
\[
    \norm{\nabla v}_2^2 \defeq \int_{\Omega} \abs{ \nabla v }^2 \ddd\Lmeas \defeq  \int_{\Omega} \dotp{\nabla v}{ \nabla v} \ddd\Lmeas.
\]
Here $\abs{ \cdot }$ is to be understood as the spatial Euclidean norm at a point $x$ in the integration domain.
If $\Omega$ has Lipschitz boundary, then for some $\Lambda,\lambda > 0$, we have (compare \cite{jauhiainen2020non})
\begin{equation}
    \label{eq:normequiv}
    \Lambda\norm{(v,V)}_{\zeta*} \ge  \norm{(v,V)}_{\Hs} \ge \lambda\norm{(v,V)}_{\zeta*}
    \quad \left((v,V) \in \Hs\right),
\end{equation}

We denote solutions to the weak formulation \eqref{eq:bilin} of the EIT model by $w = (u,I)$ and use the inequalities
\begin{equation}\label{eq:normineq}
    \norm{\nabla u}_{2} \le \norm{w}_{\zeta *} \quad\text{and}\quad \norm{I}_{2} \le \norm{w}_{\zeta *}
\end{equation}
that follow from the definition of $\norm{w}^2_{\zeta *}$.

\subsection{Continuity of the conductivity-to-current maps}\label{ssec:cim}

Let $(\Omega,\Sigma,\mu)$ be a measure space, where $\Sigma$ is $\sigma$-algebra on $\Omega$, and $\mu$ a measure on this $\sigma$-algebra. We say that $\gamma_k\to\gamma$ in measure if for every $\epsilon > 0$,
$$
    \lim\limits_{k\to\infty} \mu(\lbrace x \in \Omega \mid \abs{\gamma(x)-\gamma_k(x) } \ge \epsilon \rbrace) = 0.
$$
We generally take $\mu=\Lmeasof$ the Lebesgue measure on $\Omega \subset \R^N$ and $\Sigma$ the Borel-measurable sets without explicitly stating this.
For $\gamma_M>\gamma_m>0$, we write
\[
    [\gamma_m,\gamma_M]
    \defeq \{ \gamma \in L^\infty(\Omega) \mid \gamma_m \le \gamma \le \gamma_M \text{ a.e.}\},
\]
where the “almost everywhere” or “a.e.” is also with respect to the Lebesgue measure on $\Omega$.
Then assuming that $\gamma \in [\gamma_m,\gamma_M]$, we will show that the electrical currents $I(\gamma_k) \in \R^{P_1}$ converge to $I(\gamma)$ if $\gamma_k \to \gamma$ in (Lebesgue) measure.
We initially work assuming the scaling condition $\zeta^{-1}_k\BoundMeasOf(\partial \Omega_{e_k}) \le 1$, but remove it at the end.
We start by showing the coercivity of $B$ with respect to $\norm{\cdot}_{\zeta*}$

\begin{lemma}
    \label{lemma:coer}
    Suppose $0 < \gamma_m \le \gamma \in L^\infty(\Omega)$ and $\zeta^{-1}_k\BoundMeasOf(\partial \Omega_{e_k}) \le 1$ for all $k=1,\dots,P_1$. Then there exists $C_1 > 0$ independent of $\gamma$ such that
    \begin{equation}\label{eq:Nest1}
        B_\gamma((v,V),(v,V)) \ge C_1\norm{(v,V)}^2_{\zeta*}
        \quad\text{for all}\quad (v,V) \in \Hs.
    \end{equation}
\end{lemma}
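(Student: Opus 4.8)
The plan is a direct coercivity estimate: expand $B_\gamma((v,V),(v,V))$, keep the two ``good'' quadratic terms $\int_\Omega \gamma\abs{\nabla v}^2\ddd\Lmeas$ and $\norm{v}^2_{\partial\Omega_e\zeta} + \norm{V}^2_2$ coming from the diagonal, and control the one indefinite cross term by Young's inequality so that it is absorbed by the good terms. Concretely, substituting $(u,I)=(v,V)$ into \eqref{eq:B} gives
\[
    B_\gamma((v,V),(v,V)) = \int_\Omega \gamma\abs{\nabla v}^2\ddd\Lmeas + \norm{v}^2_{\partial\Omega_e\zeta} - \sum_{k=1}^{P_1}\tfrac{1}{\zeta_k}\int_{\partial\Omega_{e_k}} v V_k \ddd\BoundMeas + \norm{V}^2_2 ,
\]
since $\sum_k I_k V_k = \norm{V}^2_2$. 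The first term is bounded below by $\gamma_m\norm{\nabla v}_2^2$ using $\gamma\ge\gamma_m$, which is where (and the only place where) the lower bound on $\gamma$ enters, and why the resulting constant will depend on $\gamma$ only through $\gamma_m$.

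For the cross term I would first apply Cauchy--Schwarz on each electrode, $\int_{\partial\Omega_{e_k}}\abs{v}\ddd\BoundMeas \le (\int_{\partial\Omega_{e_k}} v^2\ddd\BoundMeas)^{1/2}\,\BoundMeasOf(\partial\Omega_{e_k})^{1/2}$, and then use exactly the scaling hypothesis $\zeta_k^{-1}\BoundMeasOf(\partial\Omega_{e_k})\le 1$ to rewrite $\tfrac{1}{\zeta_k}\BoundMeasOf(\partial\Omega_{e_k})^{1/2} = \zeta_k^{-1/2}(\zeta_k^{-1}\BoundMeasOf(\partial\Omega_{e_k}))^{1/2}\le \zeta_k^{-1/2}$. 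This turns the cross term into $\sum_k\abs{V_k}\,(\tfrac{1}{\zeta_k}\int_{\partial\Omega_{e_k}} v^2\ddd\BoundMeas)^{1/2}$, and a further Cauchy--Schwarz over $k=1,\dots,P_1$ bounds it by $\norm{V}_2\,\norm{v}_{\partial\Omega_e\zeta}$. Young's inequality then yields $\le \tfrac12\norm{V}_2^2 + \tfrac12\norm{v}^2_{\partial\Omega_e\zeta}$, so that
\[
    B_\gamma((v,V),(v,V)) \ge \gamma_m\norm{\nabla v}_2^2 + \tfrac12\norm{v}^2_{\partial\Omega_e\zeta} + \tfrac12\norm{V}_2^2 \ge C_1\norm{(v,V)}^2_{\zeta*}
\]
with $C_1 \defeq \min\{\gamma_m,\tfrac12\} > 0$, which is independent of $\gamma$ as required.

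There is no genuine obstacle here; the only points needing care are (i) getting the cross term estimate to consume \emph{at most half} of each of $\norm{v}^2_{\partial\Omega_e\zeta}$ and $\norm{V}^2_2$, which is what makes the scaling hypothesis $\zeta_k^{-1}\BoundMeasOf(\partial\Omega_{e_k})\le 1$ do its job cleanly, and (ii) tracking the constant so that it depends on $\gamma$ only through $\gamma_m$ (and not, e.g., through $\gamma_M$ or the modulus of continuity of $\gamma$), since this uniformity is what later permits passing to the limit $\gamma_k\to\gamma$ in measure. The scaling condition itself is harmless: as announced before the lemma, it can be removed afterwards by rescaling each $\zeta_k$ (equivalently, absorbing the factor into the electrode term), at the cost of a constant $C_1$ that then also depends on $\max_k \zeta_k^{-1}\BoundMeasOf(\partial\Omega_{e_k})$.
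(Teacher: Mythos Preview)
Your proof is correct and follows essentially the same approach as the paper: expand $B_\gamma((v,V),(v,V))$, bound the gradient term below using $\gamma\ge\gamma_m$, and absorb the cross term into the two remaining quadratic terms via Young's inequality and the scaling hypothesis. The only cosmetic difference is that the paper applies Young's inequality pointwise, $\tfrac{1}{\zeta_k}\int_{\partial\Omega_{e_k}} vV_k\,\ddd\BoundMeas \le \tfrac{1}{2\zeta_k}\int_{\partial\Omega_{e_k}}(v^2+V_k^2)\,\ddd\BoundMeas$, and then uses $\zeta_k^{-1}\BoundMeasOf(\partial\Omega_{e_k})\le 1$ on the $V_k^2$ term, arriving at the same bound in one step rather than via two applications of Cauchy--Schwarz; the resulting constant is the same (up to the harmless difference between $\min\{\gamma_m,\tfrac12\}$ and $\tfrac12\min\{1,\gamma_m\}$). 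One small inaccuracy in your closing remark: the paper removes the scaling condition not by rescaling the contact impedances $\zeta_k$ but by rescaling the \emph{domain} $\Omega$ (see \cref{thm:cim}).
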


\begin{proof}
    Young's inequality and $\zeta^{-1}_k \BoundMeasOf(\partial \Omega_{e_k}) \le 1$ give
    \[
        \tfrac{1}{\zeta_k}\int_{\partial \Omega_{e_k}} vV_k \ddd\BoundMeas 
        \le \tfrac{1}{2\zeta_k}\int_{\partial \Omega_{e_k}} \bigl(  v^2  + V_k^2 \bigr)\ddd\BoundMeas 
        \le \tfrac{1}{2}\bigl(  \tfrac{1}{\zeta_k}\int_{\partial \Omega_{e_k}} v^2 \ddd\BoundMeas  + V_k^2 \bigr).
    \]
    Thus, taking $C_1 = \tfrac{1}{2}\min \left\lbrace1,\gamma_m\right\rbrace$, we have
    \begin{equation*}
        \begin{aligned}[t]
            B_\gamma((v,V),(v,V)) &
            = \int_{\Omega} \gamma \dotp{\nabla v}{ \nabla v} \ddd\Lmeas + \sum_{k=1}^{P_1} \tfrac{1}{\zeta_k}\int_{\partial \Omega_{e_k}} v^2 \ddd\BoundMeas- \sum_{k=1}^{P_1} \tfrac{1}{\zeta_k}\int_{\partial \Omega_{e_k}} vV_k \ddd\BoundMeas + \sum_{k=1}^{P_1} V_k^2 
            \\&
            \ge C_1\left( \int_{\Omega} \dotp{\nabla v}{ \nabla v} \ddd\Lmeas +  \sum_{k=1}^{P_1}\tfrac{1}{\zeta_k} \int_{\partial \Omega_{e_k}} v^2 \ddd\BoundMeas + \sum_{k=1}^{P_1} V_k^2 \right) 
            = C_1\norm{(v,V)}_{\zeta*}^2.
            \mbox{\qedhere}
        \end{aligned}
    \end{equation*}
\end{proof}

We can now establish the well-posedness of \eqref{eq:bilin}.

\begin{lemma}
    \label{lemma:well-posed}
    Suppose $\Omega \subset \R^N$ has Lipschitz boundary, $\gamma_m>0$, and $0 < \zeta^{-1}_k\BoundMeasOf(\partial \Omega_{e_k}) \le 1$ for all $k=1,\dots,P_1$.
    Then for any  $\gamma \in L^\infty(\Omega)$ with $\gamma \ge \gamma_m$ (a.e.), \eqref{eq:bilin} has a unique solution $w=(u,I)$.
\end{lemma}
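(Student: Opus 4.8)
The plan is a direct application of the Lax--Milgram lemma on the Hilbert space $\Hs(\Omega) = H^1(\Omega) \oplus \R^{P_1}$ equipped with the norm $\norm{\cdot}_{\Hs}$ of \eqref{eq:Hnorm}. Three things must be checked: that $B_\gamma$ is a bounded bilinear form on $\Hs \times \Hs$, that $B_\gamma$ is coercive on $\Hs$, and that $L$ is a bounded linear functional on $\Hs$. Note that $B_\gamma$ is \emph{not} symmetric, since under transposition the coupling term $-\sum_k \zeta_k^{-1}\int_{\partial\Omega_{e_k}} uV_k\,\ddd\BoundMeas$ turns into $-\sum_k \zeta_k^{-1}\int_{\partial\Omega_{e_k}} v I_k\,\ddd\BoundMeas$; I would therefore invoke the general (non-symmetric) form of Lax--Milgram rather than the Riesz representation theorem. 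Symmetry plays no role in existence or uniqueness.

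For boundedness of $B_\gamma$ I would estimate the four terms of \eqref{eq:B} in turn. The interior term is bounded by $\norm{\gamma}_{L^\infty}\norm{\nabla u}_2\norm{\nabla v}_2$ by Cauchy--Schwarz, and $\norm{\gamma}_{L^\infty} < \infty$ since $\gamma \in L^\infty(\Omega)$. For the two boundary integrals I would use the trace embedding $H^1(\Omega) \hookrightarrow L^2(\partial\Omega)$, which holds because $\Omega$ has Lipschitz boundary, together with $\zeta_k > 0$ (which follows from $0 < \zeta_k^{-1}\BoundMeasOf(\partial\Omega_{e_k})$ and $\BoundMeasOf(\partial\Omega_{e_k}) < \infty$): this gives $\tfrac{1}{\zeta_k}\int_{\partial\Omega_{e_k}} uv\,\ddd\BoundMeas \le C\norm{u}_{H^1}\norm{v}_{H^1}$ and $\tfrac{1}{\zeta_k}\int_{\partial\Omega_{e_k}} uV_k\,\ddd\BoundMeas \le C\norm{u}_{H^1}\abs{V_k}$. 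The remaining term $\sum_k I_k V_k$ is bounded by $\norm{I}_2\norm{V}_2$. Summing, $\abs{B_\gamma((u,I),(v,V))} \le C\norm{(u,I)}_{\Hs}\norm{(v,V)}_{\Hs}$ with $C$ depending only on $\Omega$, $\zeta$, and $\norm{\gamma}_{L^\infty}$. For boundedness of $L$ the same trace argument applied to the fixed data $U$ gives $\abs{L(v,V)} \le C'\norm{(v,V)}_{\Hs}$, since $\tfrac{1}{\zeta_k}\int_{\partial\Omega_{e_k}} U_k(v-V_k)\,\ddd\BoundMeas \le C(\norm{v}_{H^1} + \abs{V_k})$.

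Coercivity is already in hand: the scaling hypothesis $\zeta_k^{-1}\BoundMeasOf(\partial\Omega_{e_k}) \le 1$ assumed here lets us apply \cref{lemma:coer}, giving $B_\gamma((v,V),(v,V)) \ge C_1\norm{(v,V)}_{\zeta*}^2$, and the norm equivalence \eqref{eq:normequiv} then yields $B_\gamma((v,V),(v,V)) \ge (C_1/\Lambda^2)\norm{(v,V)}_{\Hs}^2$. With boundedness, coercivity and boundedness of $L$ established, Lax--Milgram produces a unique $w=(u,I) \in \Hs$ satisfying \eqref{eq:bilin}. The main point worth flagging is that there is no real obstacle: the only non-routine inputs are the trace embedding, supplied by the Lipschitz hypothesis, and \cref{lemma:coer}; the rest is bookkeeping of the $\zeta$-dependent constants and care about the non-symmetry of $B_\gamma$.
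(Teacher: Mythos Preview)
Your proposal is correct and follows essentially the same route as the paper: invoke Lax--Milgram, obtaining coercivity from \cref{lemma:coer} together with the norm equivalence \eqref{eq:normequiv}, and boundedness of $B_\gamma$ and $L$ by routine estimates. The paper compresses the boundedness check into a single appeal to \eqref{eq:normequiv} (since the $\norm{\cdot}_{\zeta*}$ norm already contains the boundary terms), whereas you spell it out via the trace embedding and note the non-symmetry of $B_\gamma$; these are cosmetic differences, not substantive ones.
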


\begin{proof}
    The equivalence \eqref{eq:normequiv} together with \cref{lemma:coer} establish the coercivity of $B$.
    As $B$ and $L$ are clearly continuous by the same \eqref{eq:normequiv}, the claim follows from the Lax–Milgram theorem.
\end{proof}

In the next lemma, we show that $\norm{\nabla u}_2$ for a solution $(u,I)$ of \eqref{eq:bilin} has an upper bound independent of the conductivity $\gamma$.

\begin{lemma}\label{lemma:CEMBound}
    Suppose $0 < \gamma_m \le \gamma \in L^\infty(\Omega)$. a.e., $\Omega \subset \R^N$ is a bounded Lipschitz domain, and $\zeta^{-1}_k \BoundMeasOf(\partial \Omega_{e_k}) \le 1$ for all $k=1,\dots,P_1$. Then there exists a constant $C_2>0$, independent of $\gamma$, such that any solution $(u,I) \in \Hs$ of \eqref{eq:bilin} satisfies
    \begin{equation}\label{eq:Lest}
        \norm{(u,I)}_{\zeta*} \le C_2\norm{U}_{2},
    \end{equation}
    Moreover, both $\norm{\nabla u}_{2},\norm{I}_{2} \le  C_2\norm{U}_{2}$.
\end{lemma}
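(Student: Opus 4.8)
The estimate \eqref{eq:Lest} is a standard Lax--Milgram a-priori bound combined with a control on the load functional $L$. The plan is as follows. First, test the weak formulation \eqref{eq:bilin} against the solution itself, i.e. take $(v,V)=(u,I)=w$, so that $B_\gamma(w,w)=L(w)$. By \cref{lemma:coer} the left-hand side is bounded below by $C_1\norm{w}_{\zeta*}^2$. Second, bound the right-hand side $L(u,I)=\sum_{k=1}^{P_1}\tfrac{1}{\zeta_k}\int_{\partial\Omega_{e_k}}U_k(u-I_k)\,\ddd\BoundMeas$ from above. Using Cauchy--Schwarz on each electrode together with $\zeta_k^{-1}\BoundMeasOf(\partial\Omega_{e_k})\le 1$,
\[
    \tfrac{1}{\zeta_k}\int_{\partial\Omega_{e_k}}\abs{U_k u}\,\ddd\BoundMeas
    \le \abs{U_k}\Bigl(\tfrac{1}{\zeta_k}\BoundMeasOf(\partial\Omega_{e_k})\Bigr)^{1/2}\Bigl(\tfrac{1}{\zeta_k}\int_{\partial\Omega_{e_k}}u^2\,\ddd\BoundMeas\Bigr)^{1/2}
    \le \abs{U_k}\,\norm{u}_{\partial\Omega_e\zeta},
\]
and similarly $\tfrac{1}{\zeta_k}\int_{\partial\Omega_{e_k}}\abs{U_k I_k}\,\ddd\BoundMeas\le\abs{U_k}\,\abs{I_k}$. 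Summing over $k$ and applying Cauchy--Schwarz in $\R^{P_1}$ gives $L(w)\le\norm{U}_2(\norm{u}_{\partial\Omega_e\zeta}+\norm{I}_2)\le\sqrt{2}\,\norm{U}_2\norm{w}_{\zeta*}$, since both $\norm{u}_{\partial\Omega_e\zeta}$ and $\norm{I}_2$ are dominated by $\norm{w}_{\zeta*}$ (cf.\ \eqref{eq:normineq}).

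Combining the two bounds, $C_1\norm{w}_{\zeta*}^2\le\sqrt{2}\,\norm{U}_2\norm{w}_{\zeta*}$. If $\norm{w}_{\zeta*}=0$ the claim is trivial; otherwise divide by $\norm{w}_{\zeta*}$ to obtain $\norm{w}_{\zeta*}\le C_2\norm{U}_2$ with $C_2=\sqrt{2}/C_1$, which is independent of $\gamma$ because $C_1$ from \cref{lemma:coer} is. Finally, the two pointwise consequences $\norm{\nabla u}_2\le C_2\norm{U}_2$ and $\norm{I}_2\le C_2\norm{U}_2$ follow immediately from \eqref{eq:normineq}.

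There is no real obstacle here; the only point requiring a little care is the handling of the boundary term $\int_{\partial\Omega_{e_k}}U_k u\,\ddd\BoundMeas$, where one must use the scaling hypothesis $\zeta_k^{-1}\BoundMeasOf(\partial\Omega_{e_k})\le 1$ to convert the $L^1(\partial\Omega_{e_k})$-type pairing against the constant $U_k$ into something controlled by the weighted boundary norm $\norm{u}_{\partial\Omega_e\zeta}$ appearing in $\norm{w}_{\zeta*}$, rather than by the full $H^1$-norm (which would introduce a trace constant and potentially a $\gamma$-dependence through the equivalence \eqref{eq:normequiv}). Keeping the estimate entirely within the $\norm{\cdot}_{\zeta*}$ norm is what makes the constant $C_2$ uniform in $\gamma$.
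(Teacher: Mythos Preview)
Your proof is correct and follows essentially the same route as the paper: test \eqref{eq:bilin} against $(u,I)$, invoke the coercivity of \cref{lemma:coer}, and bound $L(u,I)$ by $\sqrt{2}\,\norm{U}_2\norm{(u,I)}_{\zeta*}$ to arrive at the same constant $C_2=\sqrt{2}/C_1$; the only cosmetic difference is that the paper keeps $u-I_k$ together via the auxiliary inequality \eqref{eq:uIineq}, whereas you split the two pieces. One small slip: in your displayed estimate you pass from $\abs{U_k}\bigl(\zeta_k^{-1}\int_{\partial\Omega_{e_k}}u^2\,\ddd\BoundMeas\bigr)^{1/2}$ directly to $\abs{U_k}\,\norm{u}_{\partial\Omega_e\zeta}$ before summing, which after Cauchy--Schwarz in $\R^{P_1}$ would produce an extra $\sqrt{P_1}$; sum the penultimate expression first and then apply Cauchy--Schwarz to obtain the stated $\norm{U}_2\norm{u}_{\partial\Omega_e\zeta}$.
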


\begin{proof}
    The assumption $\zeta^{-1}_k \BoundMeasOf(\partial \Omega_{e_k}) \le 1$ for all $k=1,\dots,P_1$, Young's and Hölder's inequalities give
    \begin{equation}\label{eq:uIineq}
        \tfrac{1}{\zeta_k}\left(\int_{\partial \Omega_{e_k}} (u-I_k) \ddd\BoundMeas\right)^2
        \le
        \tfrac{2}{\zeta_k}\left( \int_{\partial \Omega_{e_k}} u^2 \ddd\BoundMeas +  I_k^2\BoundMeasOf(\partial \Omega_{e_k}) \right)
        \le
        2\left( \tfrac{1}{\zeta_k} \int_{\partial \Omega_{e_k}} u^2 \ddd\BoundMeas +  I_k^2 \right).
    \end{equation}
    Now letting $(u,I)$ be a solution to \eqref{eq:bilin}, as exists by \cref{lemma:well-posed}, and using \cref{lemma:coer} and \eqref{eq:uIineq} we obtain
    \[
        \begin{aligned}[t]
            C_1\norm{(u,I)}^2_{\zeta*}
            &
            \le
            B_\gamma((u,I),(u,I))
            =
            L(u,I)
            =
            \sum_{k=1}^{P_1} \tfrac{1}{\zeta_k} \int_{\partial \Omega_{e_k}} U_k(u-I_k) \ddd\BoundMeas
            \\
            &
            \le
            \norm{U}_{2}\sqrt{ \sum_{k=1}^{P_1} \tfrac{1}{\zeta_k} \left(\int_{\partial \Omega_{e_k}} (u-I_k)d \BoundMeas\right)^2 }
            \le
            \norm{U}_{2}\sqrt{\sum_{k=1}^{P_1}  2\left(\tfrac{1}{\zeta_k} \int_{\partial \Omega_{e_k}} u^2 \ddd\BoundMeas +  I_k^2  \right)}.
        \end{aligned}
    \]
    Finally using $0 \le \int_\Omega \nabla u \cdot \nabla u d \Lmeas$ gives
    \begin{multline*}
        \norm{U}_{2}\sqrt{\sum_{k=1}^{P_1}  2\left(\tfrac{1}{\zeta_k} \int_{\partial \Omega_{e_k}} u^2 \ddd\BoundMeas +  I_k^2  \right)}
        \le
        \norm{U}_{2}\sqrt{ 2\left(  \int_\Omega \nabla u \cdot \nabla u d \Lmeas + \sum_{k=1}^{P_1} \left(\tfrac{1}{\zeta_k}  \int_{\partial \Omega_{e_k}} u^2 \ddd\BoundMeas +  I_k^2 \right)\right)}
        \\
        =
        \sqrt{2}\norm{U}_{2}\norm{(u,I)}_{\zeta*}.
    \end{multline*}
    Altogether, therefore we obtain \eqref{eq:Lest} for $C_2 = \frac{\sqrt{2}}{C_1}$. The inequalities for $\norm{\nabla u}_{2}$ clearly follow from \eqref{eq:normineq}.
\end{proof}

Before showing the continuity of the electrical currents $I$ in measure, we establish two more technical auxiliary results.

\begin{lemma}\label{lemma:cinfbound}
    Suppose that $\zeta^{-1}_k \BoundMeasOf(\partial \Omega_{e_k}) \le 1$ for all $k=1,\dots,P_1$ and that $\Omega \subset \R^N$ is a bounded Lipschitz domain. Given solutions $(u_1,I_1),(u_2,I_2) \in \Hs$ of \eqref{eq:bilin} for the respective conductivities $\gamma_1$ and $\gamma_2$, there exists $C_3 > 0$ independent of $(u_1,I_1)$, $(u_2,I_2)$ such that
    \begin{equation}\label{eq:Gest}
        \left| \int_{\Omega} \gamma \dotp{\nabla u_2}{ \nabla (u_2-u_1)} \ddd\Lmeas \right| \le C_3\norm{\gamma}_{\infty}
        \quad\text{for all}\quad \gamma \in L^\infty(\Omega).
    \end{equation}
\end{lemma}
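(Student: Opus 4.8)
The plan is to estimate the integral directly by Cauchy--Schwarz and then absorb the resulting gradient norms using the uniform bound of \cref{lemma:CEMBound}.

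First I would apply the pointwise Cauchy--Schwarz inequality for the spatial inner product $\dotp{\freevar}{\freevar}$ at each $x \in \Omega$, then pull out $\norm{\gamma}_\infty$ and use Hölder's (or Cauchy--Schwarz) inequality in $L^2(\Omega)$, to obtain
\[
    \left| \int_{\Omega} \gamma \dotp{\nabla u_2}{ \nabla (u_2-u_1)} \ddd\Lmeas \right|
    \le \norm{\gamma}_\infty \int_\Omega \abs{\nabla u_2}\,\abs{\nabla(u_2-u_1)} \ddd\Lmeas
    \le \norm{\gamma}_\infty \,\norm{\nabla u_2}_2 \,\norm{\nabla(u_2-u_1)}_2 .
\]
Then I would use the triangle inequality $\norm{\nabla(u_2-u_1)}_2 \le \norm{\nabla u_1}_2 + \norm{\nabla u_2}_2$ to reduce everything to the individual gradient norms $\norm{\nabla u_1}_2$ and $\norm{\nabla u_2}_2$.

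Finally, since $\gamma_1,\gamma_2$ are bounded below by $\gamma_m > 0$ (the standing assumption of this subsection, under which the solutions exist), \cref{lemma:CEMBound} applies to each and gives $\norm{\nabla u_i}_2 \le C_2\norm{U}_2$ for $i=1,2$, with $C_2$ independent of the conductivity. Combining the three displays yields
\[
    \left| \int_{\Omega} \gamma \dotp{\nabla u_2}{ \nabla (u_2-u_1)} \ddd\Lmeas \right|
    \le \norm{\gamma}_\infty \cdot C_2\norm{U}_2 \cdot \bigl(2 C_2\norm{U}_2\bigr)
    = 2 C_2^2 \norm{U}_2^2 \,\norm{\gamma}_\infty ,
\]
so one may take $C_3 = 2 C_2^2 \norm{U}_2^2$. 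This depends only on $\Omega$, the electrodes, the contact impedances, and the fixed excitation $U$, hence in particular not on the particular solutions $(u_1,I_1)$, $(u_2,I_2)$, as required.

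The argument is essentially routine: the only point that needs attention is that invoking \cref{lemma:CEMBound} requires both conductivities to satisfy the lower bound $\gamma \ge \gamma_m$, which holds here; there is no genuine obstacle.
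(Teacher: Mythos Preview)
Your proof is correct and follows essentially the same route as the paper: pull out $\norm{\gamma}_\infty$, bound the remaining integral in terms of $\norm{\nabla u_1}_2$ and $\norm{\nabla u_2}_2$, and invoke \cref{lemma:CEMBound}. The only cosmetic difference is that the paper uses Young's inequality pointwise (obtaining $C_3 = 3C_2^2\norm{U}_2^2$) whereas you use Cauchy--Schwarz in $L^2$ (obtaining the slightly sharper $C_3 = 2C_2^2\norm{U}_2^2$).
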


\begin{proof}
    Let $(u_1,I_1),(u_2,U_2) \in \Hs(\Omega)$. For vectors $ \nabla u_1(x), \nabla u_2(x) \in \R^N$, the Cauchy-Schwartz gives $\abs{\dotp{\nabla u_1(x)}{\nabla u_2(x)}} \le {\abs{\nabla u_1(x)}\abs{\nabla u_2(x)}}$, and Young's inequality gives ${\abs{\nabla u_1(x)}\abs{\nabla u_2(x)}} \le \tfrac{1}{2}(\abs{\nabla u_1(x)}^2 + \abs{\nabla u_2(x)}^2) \le \abs{\nabla u_1(x)}^2 + \abs{\nabla u_2(x)}^2$. Thus, the triangle inequality and \cref{lemma:CEMBound} yield for $C_3 \defeq 3 C_2^2\norm{U}_{2}^2$ that
    \begin{equation}\label{eq:essup}
        \begin{aligned}[t]
            \left| \int_{\Omega} \gamma \dotp{\nabla u_2}{ \nabla (u_2-u_1)} \ddd\Lmeas \right|
            &
            \le
            \norm{\gamma}_{\infty} \bigl( \int_{\Omega}  \left|\dotp{\nabla u_2}{ \nabla (u_2-u_1)}\right| \ddd\Lmeas \bigr)
            \\
            &
            \le
            \norm{\gamma}_{\infty} \left( \int_{\Omega}  \abs{\nabla u_2}^2 + \abs{\nabla u_2}^2 + \abs{\nabla u_1}^2 \ddd\Lmeas \right)
            \\
            &
            \le
            \norm{\gamma}_{\infty} 3 C_2^2\norm{U}_{2}^2 = C_3\norm{\gamma}_{\infty}.
            \mbox{\qedhere}
        \end{aligned}
    \end{equation}
\end{proof}

\begin{lemma}\label{lemma:nunuk}
    Suppose $f,g_k:\Omega \to \R^N$ are such that $\abs{f} \in L^2(\Omega)$ and $\lbrace \abs{ g_k}\rbrace_{k\in\N}$ is bounded in $L^2$. Let $\Omega_k \subset \Omega$ is a sequence of measurable sets such that $\lim_{k \to \infty}\Lmeasof(\Omega_k)= 0$. Then $\lim_{k\to\infty}\int_{\Omega_k} \abs{\dotp{ f}{ g_k}} \ddd\Lmeas = 0$.
\end{lemma}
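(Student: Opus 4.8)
The plan is to reduce the statement to the absolute continuity of the Lebesgue integral of the fixed function $\abs{f}^2\in L^1(\Omega)$, with everything else being two elementary inequalities.

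First I would bound the integrand pointwise: by the Cauchy–Schwarz inequality in $\R^N$, $\abs{\dotp{f(x)}{g_k(x)}}\le\abs{f(x)}\,\abs{g_k(x)}$ for a.e.\ $x\in\Omega$. Applying Hölder's inequality on the set $\Omega_k$ then gives
\[
    \int_{\Omega_k}\abs{\dotp{f}{g_k}}\ddd\Lmeas
    \le\int_{\Omega_k}\abs{f}\,\abs{g_k}\ddd\Lmeas
    \le\Bignorm{\abs{f}}_{L^2(\Omega_k)}\Bignorm{\abs{g_k}}_{L^2(\Omega_k)}
    \le M\,\Bignorm{\abs{f}}_{L^2(\Omega_k)},
\]
where $M\defeq\sup_{k\in\N}\norm{\abs{g_k}}_{L^2(\Omega)}<\infty$ by the boundedness hypothesis on $\{\abs{g_k}\}$.

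It therefore suffices to show that $\int_{\Omega_k}\abs{f}^2\ddd\Lmeas\to 0$. Since $\abs{f}^2\in L^1(\Omega)$, this is precisely the absolute continuity of the Lebesgue integral: for every $\epsilon>0$ there is $\delta>0$ such that $\Lmeasof(A)<\delta$ implies $\int_A\abs{f}^2\ddd\Lmeas<\epsilon$, and since $\Lmeasof(\Omega_k)\to 0$ we eventually have $\Lmeasof(\Omega_k)<\delta$. Alternatively, to avoid invoking this fact directly, argue by contradiction: if $\int_{\Omega_k}\abs{f}^2\ddd\Lmeas\not\to 0$, pass to a subsequence along which this integral is bounded below by some $\epsilon_0>0$; since $\Lmeasof(\Omega_k)\to 0$, pass to a further subsequence $(\Omega_{k_j})_j$ with $\sum_j\Lmeasof(\Omega_{k_j})<\infty$, so that $\chi_{\Omega_{k_j}}\to 0$ a.e.\ by Borel–Cantelli; then dominated convergence with dominating function $\abs{f}^2$ gives $\int_{\Omega_{k_j}}\abs{f}^2\ddd\Lmeas\to 0$, a contradiction. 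Combining with the display above yields $\int_{\Omega_k}\abs{\dotp{f}{g_k}}\ddd\Lmeas\le M\,\Bignorm{\abs{f}}_{L^2(\Omega_k)}\to 0$.

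There is no genuinely hard step here: the whole argument is Cauchy–Schwarz, Hölder, and the uniform $L^2$-bound on $\{\abs{g_k}\}$ used only to absorb $\norm{\abs{g_k}}_{L^2(\Omega_k)}$ into the constant $M$. The only point I would state explicitly rather than leave implicit is the last one — that the mass of a fixed integrable function over sets of vanishing Lebesgue measure tends to zero — which is where the $L^2$-membership of $\abs{f}$ (hence $L^1$-membership of $\abs{f}^2$) is actually used.
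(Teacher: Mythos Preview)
Your proof is correct and follows essentially the same route as the paper: pointwise Cauchy--Schwarz, then H\"older to reduce to $\sqrt{\int_{\Omega_k}\abs{f}^2\ddd\Lmeas}$ times the uniform $L^2$-bound on $\{g_k\}$, and finally the absolute continuity of the integral of the fixed $L^1$ function $\abs{f}^2$ over sets of vanishing measure. The paper simply cites a textbook for this last step rather than giving your Borel--Cantelli alternative, but the argument is otherwise identical.
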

\begin{proof}
    Since  $\lbrace \abs{ g_k}\rbrace_{k\in\N}$ is bounded in $L^2$, there exists $C > 0$ such that $\int_{\Omega}  \abs{ g_k}^2 \ddd\Lmeas \le C$. Now write $\chi_{\Omega_k}$ for the $\lbrace 0,1 \rbrace$-valued characteristic function of $\Omega_k$. Similarly to the proof of \cref{lemma:cinfbound}, the Cauchy-Schwartz inequality gives $\abs{\dotp{ f}{ g_k}} \le \sqrt{\abs{ f}^2\abs{ g_k}^2}$. Further, Hölder's inequality and the fact that $\int_{\Omega_k} \abs{ f}\abs{ g_k} \ddd\Lmeas = \int_{\Omega} \chi_{\Omega_k} \abs{ f}\abs{ g_k} \ddd\Lmeas$ gives
    \[
        \begin{aligned}[t]
            \int_{\Omega_k} \abs{\dotp{ f}{ g_k}} \ddd\Lmeas
            &
            \le \int_{\Omega} \sqrt{(\abs{ f}\chi_{\Omega_k})^2 \abs{ g_k}^2} \ddd\Lmeas
            \\
            &
            \le \sqrt{\int_{\Omega} (\abs{ f }\chi_{\Omega_k})^2 \ddd\Lmeas} \sqrt{\int_{\Omega} \abs{ g_k}^2 \ddd\Lmeas} \le \sqrt{\int_{\Omega_k} \abs{ f}^2 \ddd\Lmeas} \sqrt{C}.
        \end{aligned}
    \]
    It is well-known (see Corollary 16.9 in \cite{jost2006postmodern}) that given a Lebesgue integrable function  $f: \Omega \to [-\infty,\infty]$ and a sequence of measurable sets $\Omega_k \subset \Omega$ such that $\lim_{k \to \infty}\Lmeasof(\Omega_k)= 0$, then $\lim_{k\to \infty} \int_{\Omega_k} f \ddd\Lmeas = 0$.
    Now since $\lim_{k \to \infty}\Lmeasof(\Omega_k)= 0$, the conditions of this result are satisfied for $\abs{f}^2$ and $\Omega_k$, and thus $\int_{\Omega_k} \abs{ f}^2 \ddd\Lmeas\to 0$ as $k\to \infty$, meaning that also $\sqrt{\int_{\Omega_k} \abs{ f}^2 \ddd\Lmeas} \sqrt{C} \to 0$ yielding
    $$
        0\le \lim_{k\to \infty}\int_{\Omega_k} \abs{\dotp{ f}{ g_k}} \ddd\Lmeas \le \lim_{k\to \infty} \sqrt{\int_{\Omega_k} \abs{ f}^2 \ddd\Lmeas} \sqrt{C}=0.
    $$ 
    This finishes the proof.
\end{proof}

We are now ready to show the continuity of $I$ in measure. 

\begin{lemma}\label{lemma:cim0}
    Suppose that $\zeta^{-1}_k \BoundMeasOf(\partial \Omega_{e_k}) \le 1$ for all $k=1,\dots,P_1$ and $\Omega \subset \R^N$ is a bounded Lipschitz domain. Let $\gamma,\gamma_k \in [\gamma_m,\gamma_M]$ be such that $\gamma_k\to\gamma$ in (Lebesgue) measure. Then $I(\gamma_k) \to I(\gamma)$.
\end{lemma}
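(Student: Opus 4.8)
The plan is to bound $\norm{w_k-w}_{\zeta*}$ directly, where $w_k=(u_k,I_k)$ and $w=(u,I)$ are the solutions of \eqref{eq:bilin} for $\gamma_k$ and $\gamma$ respectively; these exist and are unique by \cref{lemma:well-posed}, since $\gamma_k,\gamma\ge\gamma_m$. First I would record the uniform bounds: by \cref{lemma:CEMBound} we have $\norm{w_k}_{\zeta*},\norm{w}_{\zeta*}\le C_2\norm{U}_2$ for all $k$, so $\set{\abs{\nabla u_k}}_{k\in\N}$ is bounded in $L^2(\Omega)$, and hence so is $\set{\abs{\nabla(u_k-u)}}_{k\in\N}$ by the triangle inequality; moreover $\abs{\nabla u}\in L^2(\Omega)$.

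Next I would subtract the two instances of \eqref{eq:bilin}. Since the conductivity enters $B_\gamma$ in \eqref{eq:B} only through the volume term $\int_\Omega\gamma\dotp{\nabla u}{\nabla v}\ddd\Lmeas$, writing $B_0$ for the remaining $\gamma$-independent part of the (bilinear) form, for every $(v,V)\in\Hs$ we get $\int_\Omega\gamma_k\dotp{\nabla u_k}{\nabla v}\ddd\Lmeas-\int_\Omega\gamma\dotp{\nabla u}{\nabla v}\ddd\Lmeas+B_0(w_k-w,(v,V))=0$. Taking $(v,V)=w_k-w$ and splitting $\nabla u_k=\nabla(u_k-u)+\nabla u$ in the first term, this identity rearranges into $B_{\gamma_k}(w_k-w,w_k-w)+\int_\Omega(\gamma_k-\gamma)\dotp{\nabla u}{\nabla(u_k-u)}\ddd\Lmeas=0$. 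Applying the coercivity estimate \eqref{eq:Nest1} of \cref{lemma:coer} (valid since $\gamma_k\ge\gamma_m$) to the first summand and Cauchy–Schwarz pointwise to the second, we arrive at the key inequality
\[
    C_1\norm{w_k-w}_{\zeta*}^2\le\int_\Omega\abs{\gamma_k-\gamma}\,\abs{\nabla u}\,\abs{\nabla(u_k-u)}\ddd\Lmeas.
\]

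To show the right-hand side tends to $0$, I would fix $\epsilon>0$ and split $\Omega$ into $\Omega_k^\epsilon\defeq\set{x\in\Omega\mid\abs{\gamma_k(x)-\gamma(x)}\ge\epsilon}$ and its complement. On the complement the integrand is at most $\epsilon\abs{\nabla u}\abs{\nabla(u_k-u)}$, so Cauchy–Schwarz together with the uniform $L^2$-bounds above yields a contribution bounded by $\epsilon C$ with $C$ independent of $k$. On $\Omega_k^\epsilon$ I would bound $\abs{\gamma_k-\gamma}\le\gamma_M-\gamma_m$ and invoke \cref{lemma:nunuk} with $f=\abs{\nabla u}$, $g_k=\abs{\nabla(u_k-u)}$ (scalar-valued, i.e. the case $N=1$) and the sets $\Omega_k^\epsilon$, which satisfy $\Lmeasof(\Omega_k^\epsilon)\to0$ by convergence in measure; this shows $\int_{\Omega_k^\epsilon}\abs{\nabla u}\abs{\nabla(u_k-u)}\ddd\Lmeas\to0$. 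Hence $\limsup_{k\to\infty}C_1\norm{w_k-w}_{\zeta*}^2\le\epsilon C$, and letting $\epsilon\downto0$ gives $\norm{w_k-w}_{\zeta*}\to0$; the claim $I(\gamma_k)\to I(\gamma)$ then follows from $\norm{I_k-I}_2\le\norm{w_k-w}_{\zeta*}$, cf.\ \eqref{eq:normineq}.

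The main obstacle is exactly the volume integral in the key inequality: the factor $\abs{\nabla(u_k-u)}$ depends on $k$, so one cannot simply pass to the limit under the integral by dominated convergence on a fixed integrand. This is precisely why \cref{lemma:nunuk} is stated for a varying sequence $g_k$ bounded in $L^2$, and combining it with the $\epsilon$-level splitting and the uniform $L^2$-bound on $\set{\abs{\nabla(u_k-u)}}_{k}$ is what makes the argument go through. The remaining ingredients — the bookkeeping of subtracting the weak forms and the two elementary estimates — are routine.
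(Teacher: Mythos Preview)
Your proof is correct and follows essentially the same approach as the paper's: derive the identity $B_{\gamma_k}(w_k-w,w_k-w)=\int_\Omega(\gamma-\gamma_k)\dotp{\nabla u}{\nabla(u_k-u)}\ddd\Lmeas$, apply the coercivity of \cref{lemma:coer}, split $\Omega$ at an $\epsilon$-level, and handle the two pieces with the uniform $L^2$-bounds and \cref{lemma:nunuk}. The only (minor) difference is that on the set where $\abs{\gamma_k-\gamma}<\epsilon$ you use a direct Cauchy--Schwarz estimate instead of invoking \cref{lemma:cinfbound}, which slightly streamlines the argument; note also that you could apply \cref{lemma:nunuk} directly with $f=\nabla u$ and $g_k=\nabla(u_k-u)$ rather than their scalar magnitudes, matching the lemma's stated codomain $\R^N$.
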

\begin{proof}    
    Let $w \defeq (u,I)$ and $w_k\defeq (u_k,I_k)$ be the solutions to \eqref{eq:bilin} for the respective conductivities $\gamma$ and $\gamma_k$, as exist by \cref{lemma:well-posed}. Then
    \begin{equation}\label{eq:Best}
        \begin{aligned}[t]
            B_{\gamma_k}(w - w_k, w - w_k) &= B_{\gamma_k}(w, w - w_k) - B_{\gamma_k}(w_k, w - w_k)
            \\
            &
            = B_{\gamma_k}(w, w - w_k) - L(w - w_k)
            \\&= B_{\gamma_k}(w, w - w_k) - B_{\gamma}(w, w - w_k)
            \\
            &
            = \int_{\Omega} (\gamma_k - \gamma) \dotp{\nabla u}{ \nabla (u-u_k)} \ddd\Lmeas.
        \end{aligned}
    \end{equation}
    By \cref{lemma:coer}, \eqref{eq:normineq}, and \eqref{eq:Best},
    \begin{equation}
        \label{eq:cimA}
        \begin{aligned}[t]
        \norm{I(\gamma)-I(\gamma_k)}_{2}^2
        &
        \le \norm{w_k-w}_{\zeta^*}^2
        \\
        &
        \le C_1^{-1}B_{\gamma_k}(w-w_k, w-w_k)
        \le C_1^{-1} \int_{\Omega} \abs{\gamma_k - \gamma} \abs{\dotp{\nabla u}{ \nabla (u-u_k)}} \ddd\Lmeas.
        \end{aligned}
    \end{equation}
    Let
    \[
        \Omega_{k}^< \defeq \lbrace x \in \Omega \mid \abs{\gamma_k -\gamma} < \epsilon_0  \rbrace
        \quad\text{and}\quad
        \Omega_{k}^\ge \defeq \lbrace x \in \Omega \mid \abs{\gamma_k -\gamma} \ge \epsilon_0 \rbrace.
    \]
    Since  $\norm{\gamma_k - \gamma}_\infty < \epsilon_0$ in $\Omega_{k}^\ge$, by \cref{lemma:cinfbound},
    \begin{equation}\label{eq:cimB}
        \begin{aligned}[t]
            & C_1^{-1} \int_{\Omega_{k}^<} \abs{\gamma_k - \gamma} \abs{\dotp{\nabla u}{ \nabla (u-u_k)}} \ddd\Lmeas
             <   C_1^{-1}  C_3 \epsilon_0.
        \end{aligned}        
    \end{equation}
    Further, in the set $\Omega^\ge_k$, using triangle inequality and $\gamma,\gamma_k \in [\gamma_m,\gamma_M]$ a.e., we have that
    \begin{equation}\label{eq:cimC}
        \begin{aligned}[t]
            & C_1^{-1} \int_{\Omega_{k}^\ge} \abs{\gamma_k - \gamma} \abs{\dotp{\nabla u}{ \nabla (u-u_k)}} \ddd\Lmeas
            \le  C_1^{-1} \abs{\gamma_M -\gamma_m}\int_{\Omega_{k}^\ge}  \abs{\dotp{\nabla u}{ \nabla (u-u_k)}} \ddd\Lmeas.
        \end{aligned}        
    \end{equation}
    Since $\gamma_k \to \gamma$ in measure, $\Lmeasof(\Omega_{k}^\ge) \to  0$ as $k\to\infty$. Let then $f \defeq C_1^{-1} \abs{\gamma_M -\gamma_m} {\nabla u}$ and $g_k \defeq {\nabla (u-u_k)}$. Clearly $f \in L^2(\Omega)$ (since $u \in H^1(\Omega)$), moreover by triangle inequality, Hölder's inequality, and \cref{lemma:CEMBound}, we have that 
    $$
        \abs{g_k} \le 2 C_1^{-1} \abs{\gamma_M -\gamma_m}\Lmeasof(\Omega) \sqrt{C_2\norm{U}_{2}},
    $$
    Clearly the conditions of \cref{lemma:nunuk} are satisfied for $\Lmeasof(\Omega_{k}^\ge)$, $f$, and $\lbrace g_k\rbrace_{k\in\N}$ meaning that for every $\epsilon_1 > 0$ there exists a $k_1$ so that $ C_1^{-1} \abs{\gamma_M -\gamma_m}\int_{\Omega_{k}^\ge}  \abs{\dotp{\nabla u}{ \nabla (u-u_k)}} \ddd\Lmeas < \epsilon_1$ when $k \ge k_1$. Let $\epsilon > 0$. Take $\epsilon_0 = C_1\epsilon^2/(2C_3)$, $\epsilon_1 = \epsilon^2/2$ and let $k$ to be large enough so that $k \ge k_1$. Combining \cref{eq:cimA,eq:cimB,eq:cimC}
    \[
        \begin{aligned}[t]
            \norm{I(\gamma)-I(\gamma_k)}_{2}^2
            &
            \le \norm{w_k-w}_{\zeta^*}^2
            \\
            &
            \le C_1^{-1}\left| \int_{M} (\gamma_k - \gamma) \dotp{\nabla u}{ \nabla (u-u_k)} \ddd\Lmeas \right|
            <   C_1^{-1}  C_3 \epsilon_0 + \epsilon_1 \le \epsilon^2.
        \end{aligned}
    \]
    Since $\epsilon > 0$ was arbitrary, we deduce, as claimed, that $I(\gamma_k) \to I(\gamma)$.
\end{proof}

It is easy to see that \Cref{lemma:coer} does not hold in general; for example, take $\zeta^{-1}_k = 4, \BoundMeasOf(\partial \Omega_{e_k}), v=1/2,$ and $ V_k = 1$. Next, however, we will show that whenever we are concerned with solutions to \eqref{eq:bilin}, the condition $\zeta^{-1}_k\BoundMeasOf(\partial \Omega_{e_k}) \le 1$ is without loss of generality, as every solution in $\Hs( \Omega)$ corresponds to a solution in $\Hs(\tilde \Omega)$ on a domain $\tilde\Omega$ for which this condition holds. Next we use this argument to generalize the results of \cref{lemma:cim0}. We note that the same argument can be also used to generalize \cref{lemma:CEMBound,lemma:cinfbound} but doing so is not necessary for the continuity proof.

\begin{theorem}\label{thm:cim}
    Suppose that $\Omega \subset \R^N$ is a bounded Lipschitz domain. Let $\gamma,\gamma_k \in [\gamma_m,\gamma_M]$ be such that $\gamma_k\to\gamma$ in (Lebesgue) measure.
    Then $I(\gamma_k) \to I(\gamma)$.
\end{theorem}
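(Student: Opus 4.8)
The plan is to reduce the statement to \cref{lemma:cim0} by showing that the normalisation $\zeta_k^{-1}\BoundMeasOf(\partial\Omega_{e_k})\le 1$ used there entails no loss of generality. The key observation is that rescaling the conductivity by a suitably small positive constant—equivalently, enlarging the contact impedances—enforces this normalisation while altering the currents only by a fixed positive multiplicative factor, and therefore preserves convergence. If the normalisation already holds, \cref{lemma:cim0} applies directly, so I may assume it fails for at least one electrode.

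First I would fix the constant
\[
    s \defeq \min_{k=1,\dots,P_1}\frac{\zeta_k}{\BoundMeasOf(\partial\Omega_{e_k})},
\]
a fixed positive number because there are finitely many electrodes with $\zeta_k>0$ and $\BoundMeasOf(\partial\Omega_{e_k})>0$. Consider the CEM \eqref{eq:CEM} on the \emph{same} domain and electrodes, with the same excitation $U$, but with conductivity $s\gamma$ and contact impedances $\zeta_k/s$; write $\hat B$, $\hat L$ for its forms \eqref{eq:B} and $\hat I$ for its conductivity-to-current map. Scaling $\gamma\mapsto s\gamma$ together with $\zeta_k^{-1}\mapsto s\zeta_k^{-1}$ multiplies the volume integral and every boundary integral in \eqref{eq:B} by $s$, while replacing $I$ by $sI$ scales the remaining sum $\sum_k I_kV_k$ to match, so that
\[
    \hat B_{s\gamma}\bigl((u,sI),(v,V)\bigr) = s\,B_\gamma\bigl((u,I),(v,V)\bigr)
    \quad\text{and}\quad
    \hat L(v,V) = s\,L(v,V)
    \qquad\text{for all }(v,V)\in\Hs .
\]
Hence $(u,I)\mapsto(u,sI)$ is a bijection from the solution set of \eqref{eq:bilin} onto that of the rescaled weak formulation. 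The original problem is well posed by \cite{somersalo1992existence}, and the rescaled one satisfies $(\zeta_k/s)^{-1}\BoundMeasOf(\partial\Omega_{e_k}) = s\,\zeta_k^{-1}\BoundMeasOf(\partial\Omega_{e_k})\le 1$ by the choice of $s$, hence is well posed by \cref{lemma:well-posed}; comparing the unique solutions on the two sides gives $\hat I(s\gamma) = s\,I(\gamma)$ for every $\gamma\in[\gamma_m,\gamma_M]$.

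It then remains to apply \cref{lemma:cim0} to the rescaled problem. Its conductivities obey $s\gamma,\,s\gamma_k\in[s\gamma_m,s\gamma_M]$ with $s\gamma_M>s\gamma_m>0$, and $s\gamma_k\to s\gamma$ in Lebesgue measure since for every $\epsilon>0$ the set $\{\,x\in\Omega : |s\gamma_k(x)-s\gamma(x)|\ge\epsilon\,\}=\{\,x\in\Omega : |\gamma_k(x)-\gamma(x)|\ge\epsilon/s\,\}$ has vanishing measure. As the rescaled data satisfies the normalisation, \cref{lemma:cim0} yields $\hat I(s\gamma_k)\to\hat I(s\gamma)$, that is $s\,I(\gamma_k)\to s\,I(\gamma)$; dividing by $s>0$ gives $I(\gamma_k)\to I(\gamma)$, as claimed.

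I do not anticipate a genuine analytical obstacle here; the only point needing care is the bookkeeping in the construction above—checking that the rescaled conductivities remain in one \emph{fixed} interval bounded away from $0$ and $\infty$, which holds because $s$ depends only on the electrode geometry and the contact impedances, not on the approximating sequence $\{\gamma_k\}$, so that \cref{lemma:cim0} is indeed applicable to the rescaled problem. One could equivalently present this reduction, as hinted before the theorem, as passing to a rescaled domain $\tilde\Omega$; it is worth noting that a plain dilation of $\Omega$ leaves the ratio $\zeta_k^{-1}\BoundMeasOf(\partial\Omega_{e_k})$ invariant, so it is the conductivity (equivalently contact-impedance) rescaling that carries the argument. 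The same device would extend \cref{lemma:CEMBound,lemma:cinfbound} beyond the normalised setting, although this is not needed here.
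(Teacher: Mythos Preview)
Your argument is correct and takes a genuinely different route from the paper. The paper passes to a dilated domain $\tilde\Omega = c\Omega$ for a suitably chosen $c>0$, keeps the contact impedances $\zeta_k$ fixed, rescales the conductivity to $\tilde\gamma(cx)=c\gamma(x)$, and verifies via a change of variables that solutions correspond through $(u,I)\mapsto(\tilde u,c^{N-1}I)$; the dilation multiplies each $\BoundMeasOf(\partial\Omega_{e_k})$ by $c^{N-1}$ and so enforces the normalisation needed for \cref{lemma:cim0}. You instead keep $\Omega$ fixed and rescale $\gamma\mapsto s\gamma$ together with $\zeta_k\mapsto\zeta_k/s$, observing that every term in $B$ and $L$ picks up the common factor $s$, so that $(u,I)\mapsto(u,sI)$ carries solutions to solutions. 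Your route avoids the change-of-variables bookkeeping and leaves the underlying space $\Hs(\Omega)$ untouched, reducing the verification to a one-line computation; the paper's route has the minor conceptual advantage of not altering the model parameters $\zeta_k$. One caveat on your closing parenthetical: a dilation of $\Omega$ with the $\zeta_k$ held fixed---exactly what the paper does---multiplies $\BoundMeasOf(\partial\Omega_{e_k})$ by $c^{N-1}$ and therefore \emph{does} change the ratio $\zeta_k^{-1}\BoundMeasOf(\partial\Omega_{e_k})$; this is precisely the mechanism behind the paper's reduction, so your remark that the dilation leaves the ratio invariant is inaccurate. This does not affect the validity of your proof.
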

\begin{proof}
    Take $\tilde \Omega = \lbrace c x \mid x \in \Omega \rbrace$ for
    $
        c \defeq (\check\zeta/\check S)^{1/(N-1)}
    $
    with
    $
        \check\zeta \defeq \min_{k} \zeta_k
    $
    and
    $
        \check S \defeq \max_{k}\BoundMeasOf(\partial \Omega_{e_k}).
    $

    Functions in $u \in H^1( \Omega)$ are in one-to-one correspondence with functions $\tilde u \in H^1(\tilde \Omega)$ through the transformation $\tilde u= u( \inv c x) $, i.e., $\tilde u(cx) = u(x)$.
    So suppose $( u, I)$ solves \eqref{eq:bilin} on $\Hs( \Omega)$ for $\gamma$ and let $\tilde \gamma(x) = c \gamma(\inv c x)$. 
    We will show that $(\tilde u, \tilde I)\defeq (\tilde u, c^{N-1}I) \in \Hs(\Omega)$ solves \eqref{eq:bilin} on $\Hs( \tilde\Omega)$ for $\tilde \gamma$.
    Let $(\tilde v, V) \in \Hs(\tilde \Omega)$ be an arbitrary test function on $\Hs(\tilde \Omega)$ and define $v(x) = \tilde v(c^{-1}x)$ so that $(v, V) \in \Hs(\Omega)$.
    A change of variables shows that
    \[
        \begin{aligned}[t]
        B_{\tilde \gamma}((\tilde u, \tilde I), (\tilde v, V))
        &
        =
        \int_{\tilde \Omega} \tilde \gamma \dotp{\nabla \tilde u}{\nabla \tilde v} d\tLmeas + \sum_{k=1}^{P_1} \tfrac{1}{\zeta_k}\int_{\partial \tilde \Omega_{e_k}} \tilde u \tilde v \ddd\BoundMeas
        - \sum_{k=1}^{P_1} \tfrac{1}{\zeta_k}\int_{\partial \tilde \Omega_{e_k}} \tilde u V_k \ddd\tBoundMeas + \sum_k^{P_1} \tilde I_kV_k
        \\
        &
        =
        \int_{\Omega} c\gamma \dotp{c^{-1}\nabla u}{c^{-1}\nabla v} c^{N}\ddd\Lmeas + \sum_{k=1}^{P_1} \tfrac{c^{N-1}}{\zeta_k}\int_{\partial \Omega_{e_k}} uv \ddd\BoundMeas
        \\
        \MoveEqLeft[-1]
        - \sum_{k=1}^{P_1} \tfrac{c^{N-1}}{\zeta_k}\int_{\partial \Omega_{e_k}} uV_k \ddd\BoundMeas
        + \sum_k^{P_1} c^{N-1}I_kV_k
        \\
        &
        = c^{N-1} B_\gamma((u, I), (v, V))
        \end{aligned}
    \]
    whereas
    \[
        \begin{aligned}[t]
        L(\tilde v, V)
        &
        = \sum_{k=1}^{P_1} \tfrac{1}{\zeta_k} \int_{\partial \tilde  \Omega_{e_k}} U_k(\tilde v-V_k) \ddd\BoundMeas
        = \sum_{k=1}^{P_1} \tfrac{c^{N-1}}{\zeta_k} \int_{\partial \Omega_{e_k}} U_k( v-V_k) \ddd\BoundMeas
        = c^{N-1}L(v, V).
        \end{aligned}
    \]
    Since $(u,I)$ solves \eqref{eq:bilin}, $B_\gamma((u,I),(v,V)) = L(v,V)$ for any $(v,V) \in \Hs(\Omega)$. Particularly this holds to our choice $(v,V)$ showing that $B_{\tilde \gamma}((\tilde u,\tilde I),(\tilde v,V)) = L(\tilde v,V)$. Since $(\tilde v, V) \in \Hs(\tilde \Omega)$ was arbitrary, this proves that $(\tilde u, \tilde I)$ solves \eqref{eq:bilin} on $\Hs( \tilde\Omega)$ for $\tilde \gamma$.

    We have
    $$
        \begin{aligned}[t]
        \zeta^{-1}_k\tilde  \xi(\partial \tilde \Omega_{e_k})
        &
        = c^{1-N} \zeta^{-1}_k\BoundMeasOf(\partial \Omega_{e_k})
        \\
        &
        \le c^{1-N} \zeta^{-1}_m \BoundMeasOf(\partial \Omega_{e_M}) = \tfrac{\check\zeta}{\check S} (\min_{k} \zeta_k)^{-1}_m \max_{k}\BoundMeasOf(\partial \Omega_{e_k}) = 1.
        \end{aligned}
    $$
    Therefore $\tilde\Omega$ satisfies the assumption $\zeta^{-1}_k\tilde  \xi(\partial \tilde \Omega_{e_k}) \le 1$ of \cref{lemma:cim0} for all $k$.
    Now, given that $\gamma_k \to \gamma$ in measure with $\gamma_k,\gamma \in [\gamma_m,\gamma_M]$, we have $c\gamma_k,c\gamma \in [c\gamma_m,c\gamma_M]$ and $c\gamma_k \to c\gamma$ in measure. Since $\zeta^{-1}_k\tilde  \xi(\partial \tilde \Omega_{e_k}) \le 1$ holds for all $k$, \cref{lemma:cim0} shows that $c^{N-1} I(\gamma_k) \to c^{N-1}I(\gamma)$, establishing the claim. The well-posedness of \eqref{eq:bilin} on $\Omega$ is established by instead supposing that $(\tilde u,\tilde I)$ solves \eqref{eq:bilin} on $\Hs(\tilde \Omega)$ for $\tilde \gamma$ and letting $(v,V)\in \Hs( \Omega)$.
\end{proof}
\begin{remark}
    It is a consequence of \cref{thm:cim} that if $\gamma,\gamma_k \in [\gamma_m,\gamma_M]$, then $I$ is also continuous w.r.t. $\norm{\cdot}_p$, $1 \le p \le \infty$, since $L^p$ convergence implies convergence in measure (through Tchebychev's inequality).
\end{remark}

\section{Approximation of the Mumford–Shah functional}
\label{ssec:Gconv}

In this section, we analyze the approximation \eqref{eq:apmsf} of the M-S regularizer \eqref{eq:msf} in the EIT reconstruction problem.
Recall that the data term reads
\begin{equation}\label{eq:G}
    G(\gamma) \defeq \tfrac{1}{2a}\norm{\WM( I(\gamma) - \Imeas)}_{2}^2.
\end{equation}
With $F$ given by \eqref{eq:msf}, our goal is to show the convergence of solutions of the approximate problems
\begin{equation}\label{eq:minFGapp}
    \min\limits_{\gamma,z} \bar F_k(\gamma,z) + G(\gamma),\quad(k > 0)
\end{equation}
to solutions of the target problem
\begin{equation}\label{eq:minFG}
    \min\limits_{\gamma} F(\gamma) + G(\gamma).
\end{equation}

\subsection{Setting up the approximation}

We cannot use the results of Ambrosio and Tortorelli from \cite{ambrosio1990approximation} directly, since their work, in parts, depends on the data fidelity term $G$ being a simple power-of-norm distance with no forward operator. Further, \cite{ambrosio1992approximation}, while amenable to the treatment of general smooth fidelities, doesn't significantly help us, as it still cannot directly handle the constraint $\gamma \ge \gamma_m>0$.
We will, however, base our work on \cite{ambrosio1990approximation} as far as possible, and adapt the rest. To do so, we replace the domain
\begin{equation*}
    D_{k,N}(\Omega) \defeq \left\lbrace (\gamma,z) \in \mathscr{B}(\Omega) \times \mathscr{B}(\Omega)\mid \phi \circ z \in H^{1}(\Omega), \Psi(n \wedge \gamma \vee -n, z) \in H^{1}(\Omega)\; \forall n \in \N \right\rbrace,
\end{equation*}
where $\Omega \subset \R^N$, $\phi(t) \defeq \int_0^1 (1-s^2)^k ds$, and $\Psi(s,t)\defeq s(1-t)^{h+1}$, of the approximating functionals $F_k$ of \eqref{eq:apmsf} by
\[
    \bar D_{k,N}(\Omega) \defeq \lbrace (\gamma,z) \in D_{k,N}(\Omega) \mid \gamma \in [\gamma_m,\gamma_M] \rbrace.
\]
We also replace $F_k$ by the slightly modified
\begin{equation}\label{eq:Fk1app}
   \bar F_k(\gamma,z) \defeq  \begin{cases} \int_\Omega  (\abs{\nabla \gamma}^2 + \abs{\nabla z}^2)(1-z^2)^{2k} + \frac{1}{4}(\alpha k z)^2 \ddd\Lmeas, & (\gamma,z)  \in \bar D_{k}(\Omega),z\le 1-\epsilon_k,
   \\
   \infty, & \mbox{otherwise, } \end{cases}
\end{equation}
where $k\in\N$ and $\epsilon_k>0$.
This differes from $F_k$ of  \cite{ambrosio1990approximation} only by the bounds $\gamma \in [\gamma_m,\gamma_M]$ and $z \le 1-\epsilon_k$, and reduces to that when $\gamma \in [\gamma_m,\gamma_m]$ and $\epsilon_k=0$.
These bounds are needed to ensure the existence and boundedness of solutions to \eqref{eq:minFGapp} with $G$ as in \eqref{eq:G}.

Since we need to restrict the conductivities $\gamma \in [\gamma_m,\gamma_M]$, we replace the space $\GSBV(\Omega)$ of generalized functions of bounded variation, used in \cite{ambrosio1990approximation} by $\SBV(\Omega)$, the space of special functions of bounded variation, consisting of all functions of bounded variation $\gamma \in \BV(\Omega)$ with zero Cantor part in the distributional derivative.
Indeed, $\GSBV(\Omega) \cap [\gamma_m,\gamma_M] = \SBV(\Omega) \cap [\gamma_m,\gamma_M]$.
We refer to \cite{ambrosio2000functions} for detailed definitions of these spaces.

To expand $F$ of \eqref{eq:msf} to the space $(\gamma,z) \in \mathscr B(\Omega)  \times \mathscr B(\Omega)$, using (superfluous) control $z$ we define
\begin{equation}\label{eq:defF}
    F(\gamma,z) \defeq  \begin{cases} \int_\Omega \abs{ \nabla \gamma}^2\ddd\Lmeas + \alpha  \mathscr H^{N-1}(S_\gamma), & \gamma  \in SBV(\Omega),\gamma \in [\gamma_m,\gamma_M], z=0 \\ +\infty, & \mbox{otherwise. } \end{cases}
\end{equation}

Before proving the $\Gamma$-convergence of $\bar F_k+G_k$ to $F+G$, we show that the approximating problems have solutions.

\begin{theorem}\label{thm:FGaexists}
    Suppose that $\Omega \subset \R^N$ is a bounded Lipschitz domain. Fix $k \in \N$ and $0 < \epsilon_k < 1$. Let $\bar F_k$ be given by \eqref{eq:Fk1app} and $G$ by \eqref{eq:G}. Then \eqref{eq:minFGapp} has a solution in $\bar D_{k,N}(\Omega)$.
\end{theorem}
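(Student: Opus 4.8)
The plan is a direct-method argument. Set $J(\gamma,z)\defeq\bar F_k(\gamma,z)+G(\gamma)$. First note $J\ge 0$, since $\bar F_k\ge 0$ and $G\ge 0$, and that $J$ takes a finite value: taking $\gamma\equiv\gamma_m$, $z\equiv 0$, the compositions defining $D_{k,N}(\Omega)$ reduce to constants, so $(\gamma_m,0)\in\bar D_{k,N}(\Omega)$; moreover $0\le 1-\epsilon_k$, so $\bar F_k(\gamma_m,0)=0$ and $J(\gamma_m,0)=G(\gamma_m)<\infty$. Hence $m\defeq\inf J\in[0,\infty)$, and we may fix a minimizing sequence $(\gamma_j,z_j)$ with $J(\gamma_j,z_j)\le m+1$ for all $j$.

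Next I would extract compactness. Along the sequence $\bar F_k(\gamma_j,z_j)<\infty$, so $\gamma_j\in[\gamma_m,\gamma_M]$, $0\le z_j\le 1-\epsilon_k$, and $(\gamma_j,z_j)\in D_{k,N}(\Omega)$. The crucial role of the constraint $z_j\le 1-\epsilon_k$ (with $z_j\ge 0$) is that $(1-z_j^2)^{2k}\ge\bigl(2\epsilon_k-\epsilon_k^2\bigr)^{2k}=:c_k>0$; together with the chain rule for Sobolev functions (applicable precisely because $\phi'$ and the partial derivatives of $\Psi$ stay bounded away from $0$ on the constrained ranges of $\gamma$ and $z$), a finite value of $\bar F_k$ forces $\gamma_j,z_j\in H^1(\Omega)$ with
\[
    \int_\Omega\bigl(\abs{\nabla\gamma_j}^2+\abs{\nabla z_j}^2\bigr)\ddd\Lmeas
    \le\frac1{c_k}\int_\Omega\bigl(\abs{\nabla\gamma_j}^2+\abs{\nabla z_j}^2\bigr)(1-z_j^2)^{2k}\ddd\Lmeas\le\frac{m+1}{c_k}.
\]
Since also $\norm{\gamma_j}_\infty\le\gamma_M$ and $\norm{z_j}_2^2\le 4(m+1)/(\alpha k)^2$ from the $(\alpha k z)^2$ term, the sequences $\{\gamma_j\},\{z_j\}$ are bounded in $H^1(\Omega)$. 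By Rellich--Kondrachov (using that $\Omega$ is a bounded Lipschitz domain), passing to a subsequence, not relabelled, $\gamma_j\weakto\bar\gamma$, $z_j\weakto\bar z$ weakly in $H^1(\Omega)$ and $\gamma_j\to\bar\gamma$, $z_j\to\bar z$ in $L^2(\Omega)$ and pointwise a.e. Hence $\bar\gamma\in[\gamma_m,\gamma_M]$, $0\le\bar z\le 1-\epsilon_k$, and a further chain-rule argument gives $(\bar\gamma,\bar z)\in\bar D_{k,N}(\Omega)$, so $(\bar\gamma,\bar z)$ is admissible.

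Finally I would show $(\bar\gamma,\bar z)$ attains the infimum. For the data term, $\gamma_j\to\bar\gamma$ in $L^2$ implies convergence in Lebesgue measure, so \cref{thm:cim} gives $I(\gamma_j)\to I(\bar\gamma)$, whence $G(\gamma_j)\to G(\bar\gamma)$ by continuity of $t\mapsto\tfrac1{2a}\norm{\WM(t-\Imeas)}_2^2$. For $\bar F_k$, the term $\int_\Omega\tfrac14(\alpha k z_j)^2$ converges to $\int_\Omega\tfrac14(\alpha k\bar z)^2$ by strong $L^2$ convergence. For the weighted Dirichlet part, write $w_j\defeq(1-z_j^2)^{2k}$, $\bar w\defeq(1-\bar z^2)^{2k}$, so $c_k\le w_j\le 1$ and $w_j\to\bar w$ a.e. The key observation is that $\sqrt{w_j}\,\nabla\gamma_j\weakto\sqrt{\bar w}\,\nabla\bar\gamma$ weakly in $L^2(\Omega;\R^N)$: for $\varphi\in L^2$ one has $\int\sqrt{w_j}\,\nabla\gamma_j\cdot\varphi=\int\nabla\gamma_j\cdot(\sqrt{w_j}\,\varphi)$, where $\sqrt{w_j}\,\varphi\to\sqrt{\bar w}\,\varphi$ strongly in $L^2$ by dominated convergence while $\nabla\gamma_j\weakto\nabla\bar\gamma$ weakly, so the product converges; likewise for $z_j$. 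Weak lower semicontinuity of the $L^2$-norm then yields
\[
    \int_\Omega\bigl(\abs{\nabla\bar\gamma}^2+\abs{\nabla\bar z}^2\bigr)\bar w\ddd\Lmeas
    \le\liminf_{j\to\infty}\int_\Omega\bigl(\abs{\nabla\gamma_j}^2+\abs{\nabla z_j}^2\bigr)w_j\ddd\Lmeas,
\]
so $\bar F_k(\bar\gamma,\bar z)\le\liminf_j\bar F_k(\gamma_j,z_j)$. Adding the pieces, $J(\bar\gamma,\bar z)\le\liminf_j J(\gamma_j,z_j)=m$, so $(\bar\gamma,\bar z)\in\bar D_{k,N}(\Omega)$ is a minimizer.

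\textbf{Main obstacle.} The boundedness bookkeeping is routine; the substantive step is lower semicontinuity of the degenerate weighted Dirichlet energy $\int_\Omega(\abs{\nabla\gamma}^2+\abs{\nabla z}^2)(1-z^2)^{2k}$, in which the weight depends on the very variable $z$ whose gradient it multiplies and only converges weakly. The factorization $w_j|\nabla\gamma_j|^2=|\sqrt{w_j}\,\nabla\gamma_j|^2$ circumvents this, but it leans essentially on the uniform bound $w_j\ge c_k>0$, i.e. on the constraint $z\le 1-\epsilon_k$ — which is exactly why that constraint was built into $\bar F_k$. A secondary point needing care is verifying that finiteness of $\bar F_k$ actually puts $\gamma,z$ in $H^1(\Omega)$ rather than merely in the space $D_{k,N}(\Omega)$ defined through nonlinear compositions, which again uses that the constraints keep $\phi'$ and $\partial\Psi$ away from zero.
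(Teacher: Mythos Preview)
Your proposal is correct and follows the same direct-method skeleton as the paper: show $(\gamma_m,0)$ is admissible with finite energy, extract $H^1$-bounds from the constraint $z\le 1-\epsilon_k$ via $(1-z^2)^{2k}\ge c_k>0$, pass to a convergent subsequence, and combine continuity of $G$ (from \cref{thm:cim}) with lower semicontinuity of $\bar F_k$.

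The one genuine difference is in how lower semicontinuity of the weighted Dirichlet term is obtained. The paper simply invokes \cite[Theorem 3.4]{ambrosio1990approximation}, which establishes lsc of $F_k$ with respect to convergence in measure; since on the constrained set $\bar F_k=F_k$, this suffices. You instead give a self-contained argument: the factorisation $w_j|\nabla\gamma_j|^2=|\sqrt{w_j}\,\nabla\gamma_j|^2$, the observation that $\sqrt{w_j}\,\varphi\to\sqrt{\bar w}\,\varphi$ strongly in $L^2$ by dominated convergence, and hence $\sqrt{w_j}\,\nabla\gamma_j\weakto\sqrt{\bar w}\,\nabla\bar\gamma$ weakly in $L^2$, after which weak lsc of the norm finishes. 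Your route is more elementary and avoids importing a result proved for the harder topology of convergence in measure; the paper's route is shorter on the page but relies on the external reference. One small imprecision in your ``Main obstacle'' paragraph: the factorisation step itself does not use $w_j\ge c_k$; that lower bound enters earlier, to secure the $H^1$-boundedness of $\nabla\gamma_j$ (and hence its weak convergence), without which the pairing argument would not go through.
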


\begin{proof}
    Let $J(\gamma, z) \defeq G(\gamma) + \bar F_k(\gamma,z)$ and denote the $G(\gamma_m)$-sublevel set of $J$ by
    \begin{equation*}
        K\defeq \left\lbrace(\gamma,z) \in D_{k,N}(\Omega) \mid J(\gamma,z) \le G(\gamma_m) \right\rbrace.
    \end{equation*}
    Then $K$ is non-empty since $G(\gamma_m) + F(\gamma_m,0) = G(\gamma_m)$, i.e. $(\gamma_m,0) \in K$, and $\inf_{(\gamma,z)\in K} J(\gamma,z)= \inf_{(\gamma,z)\in D_{k,N}(\Omega)} J(\gamma,z)$.

    Let $(\gamma,z) \in K$. Since $G(\gamma) + \bar F_k(\gamma,z) \le G(\gamma_m) < \infty$, $\gamma \in [\gamma_m,\gamma_M]$ and $z \in [0,1-\epsilon_k]$. Further, we have
    \[
        \begin{aligned}[t]
            \int_\Omega (\abs{\nabla \gamma}^2 + \abs{\nabla z}^2)\epsilon_k^{2k} \ddd\Lmeas &= \int_\Omega  (\abs{\nabla \gamma}^2 + \abs{\nabla z}^2)((2-1)\epsilon_k)^{2k} \ddd\Lmeas
            \le \int_\Omega  (\abs{\nabla \gamma}^2 + \abs{\nabla z}^2)((2-\epsilon_k)\epsilon_k)^{2k} \ddd\Lmeas \\ &= \int_\Omega  (\abs{\nabla \gamma}^2 + \abs{\nabla z}^2)(1-(1-\epsilon_k)^2)^{2k} \ddd\Lmeas \le \int_\Omega  (\abs{\nabla \gamma}^2 + \abs{\nabla z}^2)(1-z^2)^{2k} \ddd\Lmeas,
        \end{aligned}
    \]
    Due to the definition of $K$, it follows
    \[
        \int_\Omega (\abs{\nabla \gamma}^2 + \abs{\nabla z}^2) \ddd\Lmeas \le \epsilon_k^{-2k}(F_k(\gamma,z) + G(\gamma))\le G(\gamma_m)\epsilon_k^{-2k},
    \]
    implying that both $\gamma$ and $z$ are bounded in $H^1(\Omega)$. 
    
    Let $\lbrace (\gamma_i, z_i) \rbrace_{i\in\N} \subset K$,  be a minimizing sequence for $J$, satisfying
    \[
        J(\gamma_i, z_i) - 1/i \le \inf_{(\tilde \gamma, \tilde z)\in D_{k,N}(\Omega)} J(\tilde \gamma, \tilde z)\defeq m
        \quad\text{for all } i \in \N.
    \]
    Since we have shown both $\{\gamma_i\}_{i \in \N}$ and $\{z_i\}_{i \in \N}$ to be bounded in $H^1(\Omega)$, $\lbrace (\gamma_i, z_i) \rbrace_{i\in\N}$ has a subsequence, unrelabelled, convergent to some $(\gamma, z) \in H^1(\Omega) \times H^1(\Omega)$ a.e. and consequently also in measure \cite[Theorem 17.15]{jost2006postmodern}. The (almost everywhere) bounds $\gamma _i\in [\gamma_m,\gamma_M]$ and $z_i \in [0,1-\epsilon_k]$ 
    ensure $\gamma \in [\gamma_m,\gamma_M]$ and $z \in [0,1-\epsilon_k]$ (almost everywhere). Further, since $z \in H^1(\Omega)$, $\phi(s) = \int_0^t(1-s^2)^k \in C^1([0,1])$, $\phi' \in L^\infty([0,1])$, and $\phi(0)=0$ by \cite[Lemma 1.25]{KinunenSobolev}
    $f \circ z \in H^1(\Omega)$ so that $(\gamma,z) \in \bar D_{k,N}(\Omega)$.

    Since $\Omega \subset \R^N$ is a bounded Lipschitz domain, $\gamma,\gamma_i \in [\gamma_m,\gamma_M]$, and $\gamma_i\to \gamma$ in measure, by \cref{thm:cim} $G(\gamma_i) \to G(\gamma)$. Further, $\gamma,\gamma_i \in [\gamma_m,\gamma_M]$ and $z,z_i \in [0,1-\epsilon_k]\subset [0,1)$ yield $\bar F_k(\gamma, z) = F_k(\gamma, z)$ and $\bar F_k(\gamma_i, z_i) = F_k(\gamma_i, z_i)$.
    Thus the lower semicontinuity of $F_k$ \cite[Theorem 3.4]{ambrosio1990approximation} establishes the lower semicontinuity of $J$. By standard arguments $J(y)=m$, establishing the claim.

\end{proof}

\subsection{\texorpdfstring{$\Gamma$}{Gamma}-convergence of the regularization functionals}

Next we disuss the $\Gamma$-convergence of $\bar F_k$ to (the extended definition in \eqref{eq:defF} of) $F$ in $\mathscr{B}(\Omega) \times \mathscr{B}(\Omega)$. Together with the continuity of the currents, this property allows us to show the convergence of the solutions of \eqref{eq:minFGapp} to \eqref{eq:minFG}.

$\Gamma$-convergence means that so-called the $\Gamma$-liminf and the $\Gamma$-limsup inequalities hold for $\bar F_k$ and $F$.
The former is to say that for any $\mathscr{B}(\Omega) \times \mathscr{B}(\Omega) \ni (\gamma_k,z_k) \to (\gamma,z) \in \mathscr{B}(\Omega) \times \mathscr{B}(\Omega)$ (in measure) we have
\begin{equation}\label{eq:Gamma1}
    \liminf\limits_{k \to \infty} \bar F_k(\gamma_k,z_k) \ge F(\gamma,z).
\end{equation}
As for $\Gamma$-limsup, we require that for all $(\gamma,0) \subset \mathscr{B}(\Omega) \times \mathscr{B}(\Omega)$, there exists a \emph{reconstruction} sequence $\lbrace (\gamma_k,z_k) \rbrace_{k\in\N} \subset \mathscr{B}(\Omega) \times \mathscr{B}(\Omega)$, such that $(\gamma_k,z_k)\to (\gamma,0)$ in measure and
\begin{equation}\label{eq:Gamma2}
    \limsup\limits_{k \to \infty} \bar F_k(\gamma_k,z_k) \le F(\gamma,0).
\end{equation}
We refer to \cite{braides2002gamma} for a more comprehensive introduction to $\Gamma$-convergence.

The $\Gamma$-convergence of $F_k$ to $F$, shown originally by Ambrosio and Tortorelli in \cite{ambrosio1990approximation}, holds under the following reflection condition:

\begin{assumption}\label{Ocond}
    $\Omega \subset \R^{N}$ is an open bounded Lipschitz domain and, moreover, there exists a neighborhood $U$ of $\partial \Omega$ and an injective bi-Lipschitz $\phi:U \cap \Omega \to U\backslash\bar \Omega$ with
    \[
        \lim_{y\to x} \phi(y) = x
        \quad\text{for all}\quad x\in \partial \Omega.
    \]
\end{assumption}
This condition allows us to extend the domain of $\gamma$ from $\Omega$ to $\Omega \cup U$ by reflecting the values of $\gamma$ in $\Omega \cap U$ to $U\backslash \bar \Omega$. It is satisfied e.g. by a ball in $\R^N$ ($\phi$ simply extends the radius) and $C^2$ domains in general. In \cref{sec:Gc}, under \Cref{Ocond}, we provide proofs for the $\Gamma$-liminf and limsup inequalities. The next corollary summarizes these proofs.

\begin{corollary}\label{corol:FkGammaconv}
    Suppose that $\alpha > 0$, $\epsilon_k \downto 0$, and that \cref{Ocond} holds. Then $\bar F_k: \mathscr{B}(\Omega) \times \mathscr{B}(\Omega)  \to \bar \R$,
    defined by \eqref{eq:Fk1app} $\Gamma$-converge to $F: \mathscr{B}(\Omega)  \times \mathscr{B}(\Omega)  \to \bar \R$ defined by \eqref{eq:defF}.
\end{corollary}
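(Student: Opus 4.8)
Since the corollary merely collects the $\Gamma$-liminf inequality~\eqref{eq:Gamma1} and the $\Gamma$-limsup inequality~\eqref{eq:Gamma2}, I would prove these two separately, in both cases importing the Ambrosio--Tortorelli analysis of $F_k$ from~\cite{ambrosio1990approximation} and transferring it to $\bar F_k$ by means of two facts already noted in the text: on the constraint set $\GSBV(\Omega)\cap[\gamma_m,\gamma_M]=\SBV(\Omega)\cap[\gamma_m,\gamma_M]$, so the two target functionals coincide there; and on the finite domain of $\bar F_k$ (where $\gamma\in[\gamma_m,\gamma_M]$ and $z\le 1-\epsilon_k$) one simply has $\bar F_k=F_k$, with $\bar F_k=+\infty$ otherwise, whence $\bar F_k\ge F_k$ pointwise. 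Only \cref{Ocond} (via the reflection $\phi$) is needed, and only for the limsup bound.

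\textbf{$\Gamma$-liminf.} Let $(\gamma_k,z_k)\to(\gamma,z)$ in measure; we may assume $\liminf_k\bar F_k(\gamma_k,z_k)<\infty$ and pass to a subsequence (not relabelled) realising the liminf with $\bar F_k(\gamma_k,z_k)\le C$. Finiteness forces $\gamma_k\in[\gamma_m,\gamma_M]$ and $0\le z_k\le 1-\epsilon_k$, so, $[\gamma_m,\gamma_M]$ being closed under convergence in measure, $\gamma\in[\gamma_m,\gamma_M]$ a.e. From $\bar F_k(\gamma_k,z_k)\ge\tfrac14\int_\Omega(\alpha k z_k)^2\,\ddd\Lmeas=\tfrac{\alpha^2k^2}{4}\norm{z_k}_2^2$ we get $\norm{z_k}_2\to 0$, hence $z=0$ a.e. On this region $\bar F_k(\gamma_k,z_k)=F_k(\gamma_k,z_k)$, so the $\Gamma$-liminf inequality of Ambrosio--Tortorelli for $F_k$~\cite{ambrosio1990approximation} gives $\gamma\in\GSBV(\Omega)$ and $\liminf_k\bar F_k(\gamma_k,z_k)\ge\int_\Omega\abs{\nabla\gamma}^2\,\ddd\Lmeas+\alpha\mathscr{H}^{N-1}(S_\gamma)$; since $\gamma\in[\gamma_m,\gamma_M]$ we have $\gamma\in\SBV(\Omega)$ and the right-hand side is exactly $F(\gamma,0)=F(\gamma,z)$ by~\eqref{eq:defF}, which is~\eqref{eq:Gamma1}.

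\textbf{$\Gamma$-limsup.} Fix $(\gamma,0)$. If $F(\gamma,0)=+\infty$, the constant sequence $(\gamma,0)$ serves. Otherwise $\gamma\in\SBV(\Omega)\cap[\gamma_m,\gamma_M]$ with $\int_\Omega\abs{\nabla\gamma}^2\,\ddd\Lmeas+\alpha\mathscr{H}^{N-1}(S_\gamma)<\infty$. Under \cref{Ocond}, Ambrosio--Tortorelli furnish a recovery sequence $(\tilde\gamma_k,\tilde z_k)\to(\gamma,0)$ in measure with $\limsup_k F_k(\tilde\gamma_k,\tilde z_k)\le\int_\Omega\abs{\nabla\gamma}^2\,\ddd\Lmeas+\alpha\mathscr{H}^{N-1}(S_\gamma)$, in which $\tilde z_k\equiv 1$ on a thin tube $T_k$ around $S_\gamma$, decays to $0$ away from it, and $\tilde\gamma_k$ is a mollification of $\gamma$. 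To bring this into $\bar D_{k,N}(\Omega)$ I would (i) truncate $\gamma_k\defeq\gamma_m\vee\tilde\gamma_k\wedge\gamma_M$: this preserves convergence to $\gamma\in[\gamma_m,\gamma_M]$, does not enlarge $S_{\tilde\gamma_k}$, satisfies $\abs{\nabla\gamma_k}\le\abs{\nabla\tilde\gamma_k}$ a.e., and leaves the weight $(1-\tilde z_k^2)^{2k}$ untouched, so $F_k(\gamma_k,\tilde z_k)\le F_k(\tilde\gamma_k,\tilde z_k)$; and (ii) cap $z_k\defeq\tilde z_k\wedge(1-\epsilon_k)$, which only decreases $\abs{\nabla z_k}$, while re-choosing the mollification radius of $\gamma_k$ on $T_k$ to a scale $\rho_k>0$ small enough that $\rho_k^{-1}(2\epsilon_k)^{2k}\to 0$ --- possible since the whole sequence $\epsilon_k\downto 0$ is known in advance, e.g.\ $\rho_k\defeq(2\epsilon_k)^k$. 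The only cost of the cap is that the gradient weight on $T_k$ rises from $0$ to at most $(1-(1-\epsilon_k)^2)^{2k}\le(2\epsilon_k)^{2k}$; since there $\abs{\nabla\gamma_k}$ is of order $\rho_k^{-1}$ on a set of width of order $\rho_k$, this adds at most a constant times $\rho_k^{-1}(2\epsilon_k)^{2k}\mathscr{H}^{N-1}(S_\gamma)\to 0$ to $\int_\Omega\abs{\nabla\gamma_k}^2(1-z_k^2)^{2k}\,\ddd\Lmeas$, while the transition profile between $1-\epsilon_k$ and $0$ still returns $\alpha\mathscr{H}^{N-1}(S_\gamma)$ in the limit $\epsilon_k\to 0$ by the one-dimensional optimal-profile computation of~\cite{ambrosio1990approximation}. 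Then $(\gamma_k,z_k)\to(\gamma,0)$ in measure and $\limsup_k\bar F_k(\gamma_k,z_k)=\limsup_k F_k(\gamma_k,z_k)\le\int_\Omega\abs{\nabla\gamma}^2\,\ddd\Lmeas+\alpha\mathscr{H}^{N-1}(S_\gamma)=F(\gamma,0)$, which is~\eqref{eq:Gamma2}.

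\textbf{Main obstacle.} The delicate step is the $\Gamma$-limsup under the constraint $z\le 1-\epsilon_k$: capping the phase field strictly below $1$ reintroduces a nonzero gradient weight exactly on the regularised jump set, so one must simultaneously slow down the mollification of $\gamma_k$ \emph{and} revisit the one-dimensional optimal profile for $\tfrac14(\alpha k z)^2+\abs{\nabla z}^2(1-z^2)^{2k}$ with boundary value $1-\epsilon_k$ in place of $1$, verifying that the sharp constant $\alpha$ in front of $\mathscr{H}^{N-1}(S_\gamma)$ is not degraded in the limit. Everything else is a routine transcription of~\cite{ambrosio1990approximation}.
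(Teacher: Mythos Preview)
Your $\Gamma$-liminf argument is correct and matches the paper's \cref{lemma:liminf} (pass to a subsequence realising the liminf, use $\bar F_k\ge F_k$ pointwise, invoke the Ambrosio--Tortorelli lower bound, and observe that the constraint $\gamma_k\in[\gamma_m,\gamma_M]$ survives in the limit).

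For the $\Gamma$-limsup there is a genuine gap. You describe the Ambrosio--Tortorelli recovery sequence as if, for every $\gamma\in\SBV(\Omega)\cap[\gamma_m,\gamma_M]$, it came with an explicit tube $T_k$ around $S_\gamma$ and an adjustable ``mollification radius'' $\rho_k$. But the explicit construction (\cite[Proposition~5.1]{ambrosio1990approximation}) only delivers
\[
\limsup_{k\to\infty} F_k(\tilde\gamma_k,\tilde z_k)\le\int_\Omega\abs{\nabla\gamma}^2\,\ddd\Lmeas+\limsup_{\rho\downarrow 0}\frac{\Lmeasof(\{x:\dist(x,S_\gamma)<\rho\})}{2\rho},
\]
and the Minkowski upper content on the right may strictly exceed $\alpha\,\mathscr H^{N-1}(S_\gamma)$ for general $\gamma\in\SBV(\Omega)$. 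Ambrosio--Tortorelli close this by a density step (\cite[Propositions~5.2--5.3]{ambrosio1990approximation}, which is where \cref{Ocond} enters): first approximate $\gamma$ by minimisers $v_\ell$ whose jump sets have Minkowski content asymptotically equal to $\mathscr H^{N-1}(S_{v_\ell})$, apply the explicit construction to each $v_\ell$, and diagonalise. Your truncation, capping, and rescaling must therefore be performed at the explicit-construction stage for each $v_\ell$ (this is precisely what the paper does in \cref{lemma:Felimsup}), and only afterwards can one diagonalise as in \cref{corol:GSBV}; the paper also checks there that the approximants $v_\ell$ themselves lie in $[\gamma_m,\gamma_M]$. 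If instead you take the already-diagonalised AT sequence and cap $\tilde z_k$ at $1-\epsilon_k$ a posteriori, you lose control: the AT bound controls only $\abs{\nabla\tilde\gamma_k}^2(1-\tilde z_k^2)^{2k}$, and on $\{\tilde z_k>1-\epsilon_k\}$ the original weight may be far smaller than your replacement $(2\epsilon_k-\epsilon_k^2)^{2k}$, so the increase need not vanish and there is no radius left to ``re-choose''.

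A smaller correction: AT's $\tilde\gamma_k$ is not a mollification but the cutoff $(1-\Psi_k)\gamma$, which vanishes on the inner tube; the paper replaces it by $(1-\Psi_k)\gamma+\gamma_m\Psi_k$ to stay in $[\gamma_m,\gamma_M]$. Your truncation $\gamma_m\vee\tilde\gamma_k\wedge\gamma_M$ achieves the same effect more cleanly (it can only decrease $\abs{\nabla\gamma_k}$), and your scaling heuristic $\rho_k^{-1}(2\epsilon_k)^{2k}\to 0$ is exactly the estimate the paper carries out with the choice $b_k=k^{-2-\delta}$, using that $(2\epsilon_k-\epsilon_k^2)^{2k}$ decays faster than any polynomial once $\epsilon_k<1$. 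So your ideas are the right ones; what is missing is the two-layer structure (explicit construction for nice $v_\ell$, then density and diagonalisation) that the paper supplies in \cref{lemma:Felimsup} and \cref{corol:GSBV}.
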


\begin{proof}
    The $\Gamma$-liminf inequality \eqref{eq:Gamma1} is given by \cref{lemma:liminf} and the $\Gamma$-limsup inequality \eqref{eq:Gamma2} is given by \cref{corol:GSBV}.
    This finishes the proof.
\end{proof}

\subsection{Convergence of solutions}
Finally, we show the convergence of solutions of \eqref{eq:minFGapp} to solutions of \eqref{eq:minFG}. We do this using the continuity of the currents and $\Gamma$-convergence of $\bar F_k$ to $F$:
\begin{theorem}
    \label{thm:measure-compactness}
    Suppose that $\alpha > 0$, and that \cref{Ocond} holds. Let $\lbrace (\bar \gamma_k,\bar z_k) \rbrace \subset \bar D_{k,N}(\Omega)$ be a sequence of solutions to \eqref{eq:minFGapp} so that $\epsilon_{k} \downto  0$. Then there exists a subsequence $ (\bar \gamma_{k_l},\bar z_{k_l})  \to (\bar \gamma,0)$ in measure so that $(\bar \gamma, 0)$ solves \eqref{eq:minFG}.
\end{theorem}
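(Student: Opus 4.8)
The plan is the classical ``$\Gamma$-convergence $+$ equi-coercivity $\Rightarrow$ convergence of minimizers'' argument (cf.\ \cite{braides2002gamma}), using the continuity of the currents (\cref{thm:cim}) to promote the $\Gamma$-convergence of the regularizers (\cref{corol:FkGammaconv}) to $\Gamma$-convergence of the full objectives. First I would record a uniform energy bound along the sequence of solutions, which exist by \cref{thm:FGaexists}. Since $\gamma_m$ is constant and $\epsilon_k<1$, the pair $(\gamma_m,0)$ lies in $\bar D_{k,N}(\Omega)$ and $\bar F_k(\gamma_m,0)=0$, so optimality of $(\bar\gamma_k,\bar z_k)$ for \eqref{eq:minFGapp} together with $G\ge 0$ gives
\[
  \bar F_k(\bar\gamma_k,\bar z_k)\le \bar F_k(\bar\gamma_k,\bar z_k)+G(\bar\gamma_k)\le \bar F_k(\gamma_m,0)+G(\gamma_m)=G(\gamma_m)<\infty .
\]
In particular $\bar\gamma_k\in[\gamma_m,\gamma_M]$, $\bar z_k\in[0,1-\epsilon_k]$, and $\int_\Omega\tfrac14(\alpha k\bar z_k)^2\ddd\Lmeas$ is bounded, whence $\norm{\bar z_k}_{L^2(\Omega)}^2\le 4G(\gamma_m)/(\alpha k)^2\to 0$ and so $\bar z_k\to 0$ in measure.

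Next I would extract a subsequence of $\{\bar\gamma_k\}$ converging in measure. The bound $\bar\gamma_k\in[\gamma_m,\gamma_M]$ alone does \emph{not} provide compactness in measure, so here one must invoke the uniform bound on $\bar F_k(\bar\gamma_k,\bar z_k)$ together with the equi-coercivity (compactness) property of the Ambrosio--Tortorelli functionals from \cite{ambrosio1990approximation}: there is a subsequence $\bar\gamma_{k_l}\to\bar\gamma$ in $L^1(\Omega)$, hence in measure, with $\bar\gamma\in\GSBV(\Omega)$. Since $[\gamma_m,\gamma_M]$ is closed under a.e.\ convergence (pass to an a.e.-convergent sub-subsequence) and $\GSBV(\Omega)\cap[\gamma_m,\gamma_M]=\SBV(\Omega)\cap[\gamma_m,\gamma_M]$ as recorded earlier, we obtain $\bar\gamma\in\SBV(\Omega)\cap[\gamma_m,\gamma_M]$, and along this subsequence $(\bar\gamma_{k_l},\bar z_{k_l})\to(\bar\gamma,0)$ in measure. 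I expect this to be the main obstacle: all the other steps are routine $\Gamma$-convergence bookkeeping, whereas here one genuinely needs the AT compactness estimate (the energy controlling, via the co-area formula, a truncated $\BV$-type seminorm of $\bar\gamma_k$); the $L^\infty$ constraint only serves to keep the limit in $\SBV$ rather than merely $\GSBV$.

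To conclude I would combine the two halves of the $\Gamma$-convergence with \cref{thm:cim}. Applying the $\Gamma$-liminf inequality \eqref{eq:Gamma1} along $(\bar\gamma_{k_l},\bar z_{k_l})\to(\bar\gamma,0)$, and using $G(\bar\gamma_{k_l})\to G(\bar\gamma)$ by \cref{thm:cim},
\[
  F(\bar\gamma,0)+G(\bar\gamma)\le\liminf_{l\to\infty}\bigl(\bar F_{k_l}(\bar\gamma_{k_l},\bar z_{k_l})+G(\bar\gamma_{k_l})\bigr)
  =\liminf_{l\to\infty}\ \min_{\gamma,z}\bigl(\bar F_{k_l}(\gamma,z)+G(\gamma)\bigr),
\]
the last equality since $(\bar\gamma_{k_l},\bar z_{k_l})$ minimizes \eqref{eq:minFGapp} (and note $F(\bar\gamma,0)<\infty$). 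Conversely, for any competitor $\gamma\in\SBV(\Omega)\cap[\gamma_m,\gamma_M]$ (others have $F(\gamma,0)=\infty$ and may be ignored), the $\Gamma$-limsup inequality \eqref{eq:Gamma2} supplies a recovery sequence $(\gamma_k,z_k)\to(\gamma,0)$ in measure with $\limsup_k\bar F_k(\gamma_k,z_k)\le F(\gamma,0)$; the AT construction mollifies, which preserves the bounds, so this sequence may be taken valued in $[\gamma_m,\gamma_M]$, whence $G(\gamma_k)\to G(\gamma)$ by \cref{thm:cim} and
\[
  \limsup_{l\to\infty}\ \min_{\gamma',z'}\bigl(\bar F_{k_l}(\gamma',z')+G(\gamma')\bigr)\le\limsup_{k\to\infty}\bigl(\bar F_k(\gamma_k,z_k)+G(\gamma_k)\bigr)\le F(\gamma,0)+G(\gamma).
\]
Chaining the two displays yields $F(\bar\gamma,0)+G(\bar\gamma)\le F(\gamma,0)+G(\gamma)$ for every $\gamma$; since $F(\gamma,z)=+\infty$ for $z\ne 0$, this means $\bar\gamma$ attains $\min_{\gamma,z}\bigl(F(\gamma,z)+G(\gamma)\bigr)=\min_\gamma\bigl(F(\gamma)+G(\gamma)\bigr)$, i.e.\ $(\bar\gamma,0)$ solves \eqref{eq:minFG} (with, as a by-product, $\min(\bar F_{k_l}+G)\to\min(F+G)$), which is the claim.
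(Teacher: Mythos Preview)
Your proof is correct and follows essentially the same route as the paper: a uniform energy bound via comparison with $(\gamma_m,0)$, the Ambrosio--Tortorelli compactness result (the paper cites \cite[Theorem~3.6]{ambrosio1990approximation} explicitly) to extract a subsequence converging in measure, then the $\Gamma$-liminf inequality plus \cref{thm:cim} on the minimizing side and the recovery sequence plus \cref{thm:cim} on the competitor side. The only imprecision is your remark that ``the AT construction mollifies''; in fact the paper's recovery sequence is $\gamma_k=(1-\Psi_k)\gamma+\gamma_m\Psi_k$, a convex combination, but the conclusion that $\gamma_k\in[\gamma_m,\gamma_M]$ is correct (and in any case is forced by the very definition of $\bar F_k$ being finite).
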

\begin{proof}
    By \cite[Theorem 3.6]{ambrosio1990approximation}, any sequence $\lbrace (\gamma_k, z_k) \rbrace \subset D_{k,N}(\Omega)$ such that
    \begin{equation}\label{eq:subseq}
        F_k(\gamma_k, z_k) + \int_\Omega \abs{\gamma_k}^2\ddd\Lmeas \le C < \infty
    \end{equation}
    has a subsequence $ ( \gamma_{k_l}, z_{k_l}) \to (\gamma,0) $ in measure. Since each $(\bar \gamma_k,\bar z_k)$ is a solution to \eqref{eq:minFGapp}, 
    $$
        G(\bar \gamma_k) + \bar F_k(\bar \gamma_k,\bar z_k) \le \bar G(\gamma_m) + \bar F_k(\gamma_m,0) =  C_1 < \infty.
    $$ 
    Since $\int_\Omega \abs{\bar \gamma_k}^2\ddd\Lmeas \le \int_\Omega \abs{\gamma_M}^2\ddd\Lmeas \defeq C_2 < \infty$, \eqref{eq:subseq} holds for $(\bar \gamma_k,\bar z_k)$ with $C=C_1 + C_2$
    and thus by \cite[Theorem 3.6]{ambrosio1990approximation}, $\lbrace (\bar \gamma_k,\bar z_k) \rbrace$ has a subsequence, unrelabelled, with $(\bar \gamma_{k},\bar z_{k})  \to (\gamma,0) $ in measure. 

    Similarly to \cite{ambrosio1990approximation}, next we show that 
    \begin{equation}\label{eq:ggg}
        F(\gamma,0) + G(\gamma) \ge F(\bar \gamma,0) + G(\bar \gamma),\quad \text{for all } \gamma \in \SBV(\Omega).
    \end{equation}
    Assuming $F(\gamma,0)$ is finite, $\gamma \in [\gamma_m,\gamma_M]$. Let $ (\gamma_{k},z_{k}) \to (\gamma,0)$ be the reconstruction sequence of \cref{corol:FkGammaconv} for the same $\epsilon_{k}\downto 0$ so that $\gamma_{k} \in [\gamma_m,\gamma_M]$. Then due to $\Gamma$-liminf and limsup inequalities, $\lim_{k\to\infty} \bar F_{k}(\gamma_{k},z_{k}) = F(\gamma,0)$. Since \cref{Ocond} holds and $\gamma,\gamma_{k} \in [\gamma_m,\gamma_M]$, by \cref{thm:cim},
    $
        G(\gamma) = \lim\limits_{k \to \infty } G(\gamma_{k}).
    $
    Combining these yields 
    \begin{equation}\label{eq:mine1}
        F(\gamma,0) + G(\gamma) 
        =  \lim\limits_{k \to \infty } \bar F_{k}(\gamma_{k},z_{k}) + G(\gamma_{k}).
    \end{equation}
    Since $\lbrace (\bar \gamma_{k},\bar z_{k}) \rbrace_{k\in\N}$ solves \eqref{eq:minFGapp}, we have
    \begin{equation}\label{eq:mine3}
        \liminf\limits_{k \to \infty } \bar F_{k}(\gamma_{k},z_{k}) + G(\gamma_{k}) 
        \ge \liminf\limits_{k \to \infty } \bar F_{k}(\bar \gamma_{k},\bar z_{k}) + G(\bar \gamma_{k}) .
    \end{equation}
    Clearly $\bar F_k(\bar \gamma_{k}, \bar z_k)$ is finite, meaning that $\bar \gamma_{k} \in [\gamma_m,\gamma_M]$ and $\limsup_{k\to \infty} \bar \gamma_{k}(x) \le \gamma_M$ and $\liminf_{k\to \infty} \bar \gamma_{k}(x) \ge \gamma_m$ a.e. imply that $\bar \gamma \in [\gamma_m,\gamma_M]$ so that $G(\bar \gamma_{k}) \to G(\bar \gamma)$ again by \cref{thm:cim}. In addition, by \Cref{corol:FkGammaconv}, it holds that $\liminf\limits_{k \to \infty } \bar F_{k}(\bar \gamma_{k},\bar z_{k}) \ge F(\gamma,0)$, hence
    \begin{equation}\label{eq:mine4}
        \liminf\limits_{k \to \infty } \bar F_{{k}}(\bar \gamma_{{k}}) + G(\bar \gamma_{{k}}) 
        \ge  F(\bar \gamma,0) + G(\bar \gamma).
    \end{equation}
    Combining \cref{eq:mine1,eq:mine3,eq:mine4} establishes \eqref{eq:ggg}, consequently proving the claim.
\end{proof}

\section{Numerical simulations and experimental study}
\label{sec:numerical}
In this section, we numerically evaluate the performance of $F_\lambda$ regularization in EIT with simulated and experimental measurement data. We consider seven test cases with synthetic data (Cases 1-7), all corresponding to different conductivity distributions. In Case 8, we use experimental data.

In the numerical scheme, we employ
a two-dimensional domain $\Omega \subset \R^2$, and use Galerkin finite element (FE) approximation $G_h$ of the data term $G$. We define it by 
$$
    G_h(\gamma) \defeq  \tfrac{1}{2a}\norm{W(I_h(\gamma) - \Imeas)}_{2}^2
$$
with $I_h(\gamma) =(I^1_h(\gamma),\dots,I^{P_2}_h(\gamma),)$ consisting of the electric current vectors $I_h^j(\gamma)$ of $w_h(\gamma)\defeq (u_h^j(\gamma),I_h^j(\gamma)) \in \Hs_h(\Omega)$, corresponding to multiple potential arrangements $U^j$. The space $\Hs_h(\Omega)$ here denotes a finite elements space $\Hs_h(\Omega) \subset \Hs(\Omega)$ spanned by piecewise linear basis functions $\phi_i$. In addition to $u^j$, we also discretize the conductivity $\gamma$ and the control parameter $z$ through the same basis so that 
$$
u^j(x) = \sum_{i=1}^{N_n} u_i^j\phi(x), \quad \gamma(x) = \sum_{i=1}^{N_n} \gamma_i\phi_i(x),\quad\text{and}\quad z(x) = \sum_{i=1}^{N_n} z_i\phi_i(x).
$$
To simplify the computations, instead of the approximating functions $F_k$ of \eqref{eq:Fk1app}, we use 
\begin{equation}\label{eq:Fe}
    F_\lambda(\gamma,z) 
    = \int_\Omega \left[\lambda \abs{\nabla z}^2 + z^2\abs{\nabla \gamma}^2 + \frac{\alpha^2(z-1)^2}{4\lambda}\right] + \delta_{[\epsilon_\lambda,1]}(z) + \delta_{[\gamma_m,\gamma_M]}(\gamma) \ddd\Lmeas
\end{equation}
for some $(0, 1) \ni \epsilon_\lambda \downto 0$. The function $\delta$ is the indicator function.
This functional with $\epsilon=0$ was suggested in a remark in \cite{ambrosio1990approximation} as an alternative to $F_k$. Alternatively to the term $o_\lambda \abs{\nabla \gamma}^2$ in \cite{ambrosio1992approximation}, we use $\epsilon_\lambda>0$ to ensure the existence of solutions.
The $\Gamma$-convergence of $F_\lambda$ to $F$ can be proved performing similar modifications to the proof of \cref{ssec:Gconv} as was done to the proof of \cite{ambrosio1990approximation} in \cite{ambrosio1992approximation}. In numerical practise, $\epsilon_\lambda$ is unnecessary, so we set it to zero.

Since $\nabla \phi$ is constant within each element of the FE-mesh, $F_\lambda$ admits a very simple form in the chosen basis.
Denoting $E_i \subset \Omega$ as the element i of the FE mesh, $A_i$ as the area of $E_i$, and $\phi_k$ and $z_k$, $k=1,2,3$, as the three basis functions within $E_i$, we have that $\abs{\nabla z}^2 \equiv g_{z,i}\defeq (z_1 \partial_x \phi_1 + z_2 \partial_x \phi_2 + z_3 \partial_x \phi_3 )^2 + (z_1 \partial_y \phi_1 + z_2 \partial_y \phi_2 + z_3 \partial_y \phi_3 )^2$ and since the partial derivatives of $\phi_i$ are constant within an element,
$$
\int_{E_i} \abs{\nabla z}^2 \ddd\Lmeas = A_ig_{z,i}.
$$
Further, simple computations show that
\begin{equation}\notag
    \int_{E_i} z^2\abs{\nabla \gamma}^2 \ddd\Lmeas 
    = A_i g_{\gamma,i}( z_1^2 + z_2^2 + z_3^2 + z_1z_2 + z_1z_3  + z_2z_3)/12
\end{equation}
and
\begin{equation}\notag
    \int_{E_i} (z-1)^2 \ddd\Lmeas 
    = A_i((z_1^2 + z_2^2 + z_3^2 + z_1z_2 + z_1z_3  + z_2z_3)/12 - (z_1 + z_2 + z_3)/3+ 1/2).
\end{equation}
Next we will lay out the plan to minimize $F_\lambda(\gamma) + G_h(\gamma)$.

\subsection{Solving the discretized problem}

Our task therefore is to solve the problem
\begin{equation}\label{eq:minDisc}
    \min\limits_{\gamma,z} J(\gamma,z),\quad J(\gamma,z) \defeq F_\lambda(\gamma,z) + G_h(\gamma) + \int_\Omega \delta_{[\gamma_m,\gamma_M]}(\gamma) +\delta_{[0,1]}(z) \ddd\Lmeas
\end{equation}
in the above-constructed finite element spaces.
We include the regularization parameter $a$ into $W$ by taking $\WM = \sqrt{a}\tilde{ \WM}$.

We base our iterative approach on the Relaxed Inexact Proximal Gauss-Newton (RIPGN) algorithm of \cite{ripgn}. This method and Gauss–Newton type methods in general are based on the iterative solution of convex subproblems obtained through the linearization of operators in the original problem. We will deviate from this slightly, only linearizing in \eqref{eq:minDisc} the operator $I_h$ within $G_h$, but retaining the non-convexity of $F_\lambda$. Correspondingly, we also do not perform the relaxation step of the RIPGN on the control variable $z$, only on $\gamma$. These choices are based on practical numerical experience, but make the convergence theory of \cite{ripgn} inapplicable.

\begin{algorithm}[t!]
    \caption{Relaxed inexact proximal Gauss--Newton method (RIPGN) adapted to the problem \eqref{eq:minDisc}.}
    \label{alg:gn-overrelax}
    \begin{algorithmic}[1]
        \Require Proximal parameter $\beta \ge 0$ and relaxation parameter $w > 0$.
        \State Choose an initial iterate $(\gamma^1,z^1) \in \Dom J$.
        \ForAll{$k \ge 1$}
        \State Find an approximate solution $(\tilde \gamma^k, \tilde z^k)$ to \eqref{eq:minJprox}
        \State Update $\gamma^{k+1} \defeq (1-w)\gamma^k + w\tilde \gamma^k$ and $z^{k+1} \defeq \tilde z^k$
        \EndFor
    \end{algorithmic}
\end{algorithm}

\begin{algorithm}[t!]
    \caption{Non-linear primal-dual proximal splitting for solving \eqref{eq:minJprox}. We denote the convex conjugate of $F$ by $F^*$ and $q = (q_1,q_2))$ such that $q_1$ corresponds to $\gamma$ and $q_2$ corresponds to $z$.}
    \label{alg:alg-blockcp}
    \begin{algorithmic}[1]
        \Require Convex, proper, lower semicontinuous $F: X \to \extR$, $H: Y \to \extR$, and operator $K\in C^2(X; Y)$.
        \State Choose step length parameters $t^j, s^j >0$ satisfying $s^j < 1/(t^j L^2)$ for some upper bounds $L \ge \sup_{k=1,\dots, j} \norm{ \nabla K(q^k)^T}$.
        \State Choose initial iterates $q^0$ and $y^0$.
        \ForAll{}{ $j \ge 0$ \textbf{until} a stopping criterion is satisfied}
        \State $
            q^{j+1} \defeq \prox{t^i}{H}{
            q^i - t^j ([\nabla K(q_j)] y)
            }
            $
        \State
            $
            \bar q^{j+1}
            \defeq
            2q^{j+1}-q^j
            $
        \State
            $
            y^{j+1} \defeq \prox{s^j}{F^*}{
            y^j + s^j K (\bar q^{j+1})
            }
            $
        \EndFor
    \end{algorithmic}
\end{algorithm}

Specifically, in \cref{alg:gn-overrelax} we replace $G_h$ on each iteration $k$ by
$$
    G_h^k(\gamma) 
    = \tfrac{1}{2a}\norm{\WM[I_h(\gamma^k)-\mathscr I + \nabla I_h(\gamma^k)^T(\gamma -\gamma^k)] }^2_{2}
$$ 
and with a suitable algorithm solve the proximal-penalized partially linearised problem
\begin{equation}\label{eq:minJprox}
    \min\limits_{\gamma,z} F_\lambda(\gamma,z) + G_h^k(\gamma) +  \delta_{[\gamma_m,\gamma_M]}(\gamma) +\delta_{[0,1]}(z) + \beta\norm{\gamma - \gamma^k}_2^2.
\end{equation}
Since $F_\lambda$ is still nonconvex, we use the Nonlinear Primal-Dual Proximal Splitting (NL-PDPS) of \cite{tuomov-nlpdhgm}.
This algorithm applies to problems of the general form
\begin{equation}
    \label{eq:nlpdps-problem}
    \min_q F(K(q)) +  H(q)
\end{equation}
where the operator $K$ is possibly non-linear, but $F$ and $H$ are convex but possibly nonsmooth. \Cref{alg:alg-blockcp} writes out this method with dynamic adaptation of the dual step length parameters $s^j$ and $t^j$ to the step length conditions.
To present the problem \eqref{eq:minJprox} in the form \eqref{eq:nlpdps-problem}, we write
$$
    G_h^k(\gamma) = \tfrac{1}{2}\norm{K_1^k\gamma - b^k}_2^2 = F_1(K_1^k\gamma), 
$$
where $K_1^k = \WM \nabla  I_h(\gamma^k)^T$, $b^k = K^k_1\gamma^k - \WM(I_h(\gamma^k) - \Imeas)$, and $F_1(y) = \tfrac{1}{2}\norm{y - b^k}_2^2$. Likewise, we write
$$
    F_\lambda(\gamma,z) = F_2(K_2(\gamma,z))
\quad\text{with}\quad
    K_2(\gamma,z)_i 
    =  \int_{E_i} \left[\lambda \abs{\nabla z}^2 + z^2\abs{\nabla \gamma}^2 + \frac{\alpha^2(z-1)^2}{4\lambda}\right] \ddd\Lmeas,\quad (i=1,\dots,N_e)
$$
for the components of non-linear function $K_2 : \R^{2N_n} \to \R^{N_e}$, and $F_2(y) =  \norm{y}_1$. Finally, we set
$$
    H(\gamma,z) =  \delta_{[\gamma_m,\gamma_M]}(\gamma)+\delta_{[0,1]}(z).
$$
Thus, defining $K^k(\gamma,z)=(K_1^k\gamma, K_2(\gamma,z))$ and $F(y_1,y_2) = F_1(y_1) + F_2(y_2)$ \eqref{eq:minJprox} reads
\[
    \min\limits_{\gamma,z} F(K^k(\gamma,z)) + H(\gamma,z).
\]

\subsection{Studies with synthetic data}
\label{sec:naes}

\begin{table}
    \caption{Description of the numerical and experimental test cases. Here, $\gamma_\mathrm{bg}$ is the background conductivity and $\gamma_\mathrm{ci}$ and $\gamma_\mathrm{ri}$  refer respectively to the non-smooth conductive and resistive inclusions placed on the smooth/constant background. Note that Cases 1-7 use simulated data. Inclusion $\gamma_\mathrm{ci}$ and $\gamma_\mathrm{ri}$ in Case 8 describe cross-section of the objects. }
    \label{table:Cases}
    \centering
    \footnotesize
    \begin{tabular}{ c l l l l}
         & \hspace{1cm}$\gamma_{\mathrm{bg}}$ &\hspace{1.5cm}$\gamma_{\mathrm{ci}}$ &\hspace{1.5cm}$\gamma_{\mathrm{ri}}$ &\hspace{0.75cm}Purpose of the test \\ 
        \hline
        \hline
        Case 1& Smoothly varying& Circular $\gamma = 10$ S &Square-shaped $\gamma = 10^{-4}$ S& \tabitem Test effects of $\alpha$ and $\lambda$. \\
        &  &  &  & \tabitem Comparison with $\TV$\\
        &  &  &  & \;\;\; and $L^2$. \\
        \hline
        Case 2& Smoothly varying & Stadium-shaped $\gamma = 7.5$ S & None & \tabitem Test with fixed $\alpha$ and $\lambda$.\\
        \hline
        Case 3& Smoothly varying & None & Stadium-shaped $\gamma = 10^{-4}$ S & \tabitem Test with fixed $\alpha$ and $\lambda$. \\
        \hline
        Case 4&  Smoothly varying & Triangular $\gamma = 8.5$ S & Triangular $\gamma = 10^{-3}$ S  & \tabitem Test with fixed $\alpha$ and $\lambda$. \\
        \hline
        Case 5& Constant $\gamma = 1$ S & None & Circular $\gamma = 10^{-4}$ S & \tabitem Test with fixed $\alpha$ and $\lambda$. \\
        \hline
        Case 6& Constant $\gamma = 1$ S & Circular $\gamma = 7.5$ S & None  & \tabitem Test with fixed $\alpha$ and $\lambda$.\\ 
        \hline
        Case 7& Smoothly varying & None & None & \tabitem Test with fixed $\alpha$ and $\lambda$. \\ 
        \hline
        Case 8& Water & Circular steel & Circular plastic & \tabitem Experimental tests \\ 
    \end{tabular} 
\end{table}

In Case 1, we investigate the effects of the control parameter $\lambda$ and the regularization parameter $\alpha$, and further, we compare the results with these parameters to smooth regularization $F_\nabla(\gamma)=\norm{ \abs{\nabla \gamma}}_2^2$ and TV regularized solutions. In Cases 2-7, with fixed $\lambda$ and $\alpha$, which are chosen based on results in Case 1, we will test Mumford–Shah regularization with multiple sets of measurement data generated with varying conductivities, comparing it against $\TV$ regularization. The Cases 1-7 use simulated data. %

\Cref{table:Cases} describes the selected true conductivity distribution in each test case as well as the purpose of each test. In the simulated cases, the data represents electrical measurements collected on the boundary of a disk shaped domain $\Omega$ (see \cref{fig:Disk_true}). The radius of the disk is $15$ cm and it has 16 evenly placed electrodes on the border. The length of an electrode is 1/32 of the border length. We draw the smooth background conductivity from a Gaussian distribution with mean of $1$ S and the so-called squared distance covariance matrix with correlation length of 12 cm \cite{kaipio2006statistical,ripgn}. Note that in \cref{fig:Disk_true}, we use a highly nonlinear color scale (see \Cref{table:CS} for exact values) --- this helps to illustrate the smooth variations of the conductivity in the background which vary in the range of $1$ S $\pm 0.2 $ S. With a linear color scale the background would have almost constant color, since the inclusions of low and high conductivity are $10^{-4}$ and $10$ S, respectively. %
\definecolor{legend0}{rgb}{0,0,0}
\definecolor{legend1}{rgb}{0.5,0,0}
\definecolor{legend2}{rgb}{1.0,0.7,0}
\definecolor{legend3}{rgb}{1.0,1.0,1.0}
\definecolor{legend4}{rgb}{0.0,1.0,1.0}

\begin{table}
    \caption{Description of the non-linear color scale used in \Cref{fig:Disk_true,fig:Case0,fig:Case1,fig:Case3}. }
    \label{table:CS}
    \centering
    \footnotesize
    \begin{tabular}{ r r r r r r}
        {Conductivity (S)} \vline& {$0$} & {$0.8$} &{ $1.0$} & { $1.2$} & {$10$ }\\
        {Color (R,G,B) } \vline& {$0,0,0$ }& {$0.5, 0, 0$} &{ $1, 0.7, 0$}&{ $1, 1, 1$}&{ $0, 1, 1$}
        \\
        \vline&
        \tikz \fill [legend0] (0,0) rectangle (1,0.3);&
        \tikz \fill [legend1] (0,0) rectangle (1,0.3);&
        \tikz \fill [legend2] (0,0) rectangle (1,0.3);&
        \tikz \draw [fill=legend3] (0,0) rectangle (1,0.3);&
        \tikz \fill [legend4] (0,0) rectangle (1,0.3);\\
    \end{tabular} 
\end{table}

To simulate the EIT measurements, we set the contact impedances to $\zeta_i = 10^{-5}$ $\Omega$, and use voltage patterns $U^j$, $j=1,2,\dots, 16$, where we set electrode $j$ to a known potential $(U^j)_j=1$ V and ground the others by setting $(U^j)_i = 0$ V for $i \neq j$. From the simulated current vectors $\Imeas^j$, we exclude the $j$'th current, the injection current, as the EIT measurement devices do not usually measure this current. 
Further, in these simulations we use a mesh that has 5039 nodes, and we add Gaussian noise with $10^{-4}\abs{(\mathscr I^j)_i}$ std to each simulated measurement $(\mathscr I^j)_i$. Note that with this setup, $\mathscr I= \left((\mathscr I^1)_2,(\mathscr I^1)_3,\dots, (\mathscr I^{16})_{15}\right)$, i.e., $P_1 = P_2 = 16$ while $\M=P_2(P_1-1) = 240$. 

For the inversion, we use sparser mesh with 2917 nodes. We start each reconstruction iteration from the best homogenous estimate \cite{jarvenpaa1996} $\gamma^1=\gamma_{\text{hmg}}$ of the conductivity and for the control variable we set $z^1 = \boldsymbol{1}$, where $\boldsymbol{1} \in \R^{N_n}$ is a vector of ones. The latter only has effect on the initial guess of $z$ at the first outer iteration. We use the following parameter configuration for the RIPGN and the NL-PDPS:
\begin{itemize}
    \item For the primal step length parameter $t^j$ of NL-PDPS we set $t^j=0.01$ for all $j \ge 0$.
    \item For the dual step length parameter $s^i$ of NL-PDPS we set $s^i=1/(2t^j(\norm{K_1}^2 + (1.2\norm{\nabla K_2(q^i)^T}_F)^2))$, where $\norm{\cdot}_F$ is the Frobenius norm. We only update $s^i$ between hundred iterations. 
    \item In NL-PDPS, we use initial iterates $q_1^1 = \gamma^k$, $q_2^1 = \tfrac{3}{4} \boldsymbol{1} + \tfrac{1}{4}z^k$, and $y=0$; we assume that $\tilde \gamma^k$ is close to $\gamma^k$ and the sharp edges in $\tilde \gamma^k$ slightly resemble those of $\gamma^k$. 
    \item For RIPGN, we set the proximal parameter to $\beta = 0.01$ and the relaxation parameter $w=0.1$. We use these to reduce the length of the steps, i.e. $\norm{\gamma_{k+1}-\gamma_k}_2$, to avoid using line search.
\end{itemize}
For further explanations of these choices, we refer to \cite{ripgn}. 

We do not assume to know neither the maximum nor the minimum conductivities precisely, and thus we set the maximum conductivity to an arbitrary large number $\gamma_M = 10^{10}$ S, and the minimum conductivity to $\gamma_m = 10^{-5} \gamma_{\text{hmg}}$ S. However, we do assume that we know the distribution of the noise approximately, meaning that we compute the weighting matrix from the measurements by setting $\tilde{\WM}_{i,j} = 200/\abs{\mathscr I_i}\delta_{i,j}$, where $\mathscr I_i$ is the $i$'th component of the measurement vector $\mathscr I$, and $\delta_{i,j}$ is the Kronecker delta. 

For a detailed explanation on the finite element approximation of $(u,I)$ and how the $\TV$ and $F_\nabla$ regularized solutions are solved using RIPGN, we refer to \cite[Section 4]{ripgn}.
\begin{figure}[!tbp]
    \centering
    \begin{minipage}{0.650\textwidth}  
        \centering
        \subfloatrecoi{Case 1}{0.42\textwidth}{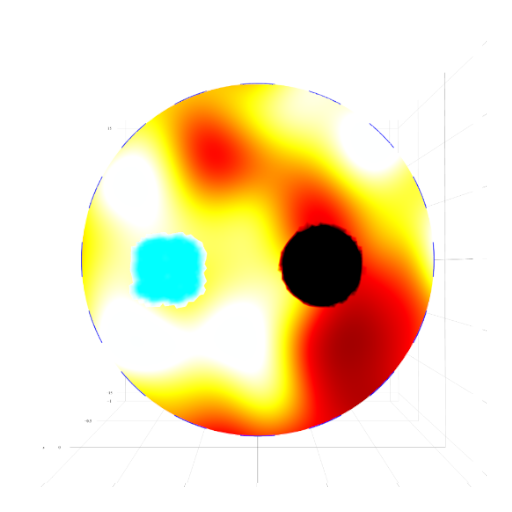}  
    \end{minipage}
    \subfloatcolorbar{-0.5cm}{-2.5cm}{0.70cm}{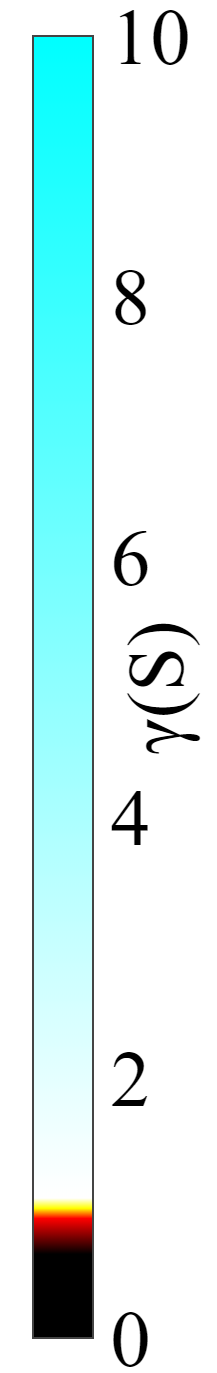}%
    \caption{Conductivities used to simulate the measurement data in Cases 1.}
    \label{fig:Disk_true}
\end{figure}
\subsubsection{Results: Synthetic data}\label{sssec:ressynth}
\begin{figure*}[!t]%
    \center
    \rotatebox{90}{\hspace{1.4cm}\scriptsize{$G_h(\gamma_k) + F_\lambda(\gamma_k)$}}
    \includegraphics[height=0.16\paperheight, trim=0 20 0 50,clip]{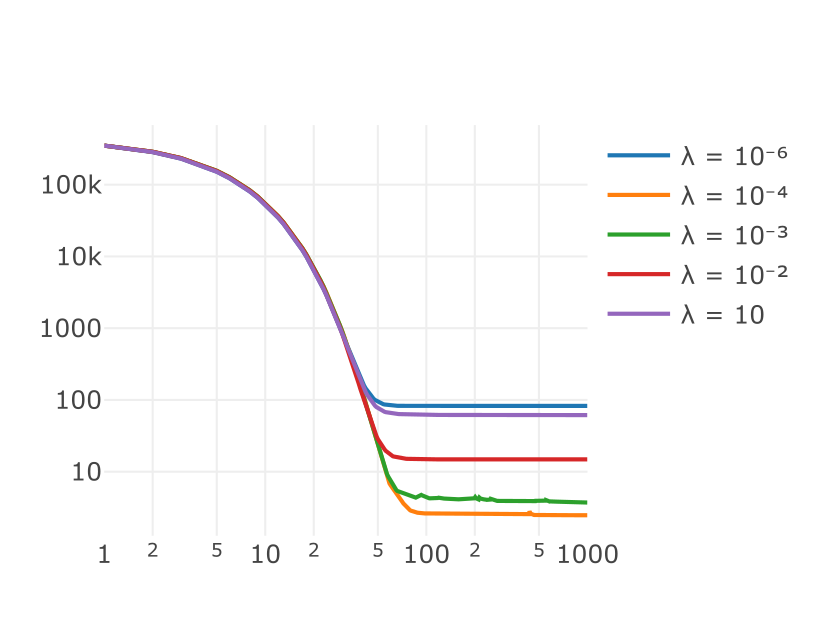}\\
    \scriptsize{\hspace{-0.45cm}$k$}
    \caption{Case 1. Convergence of the objective function $G_h(\gamma) + F_\lambda(\gamma)$ with varying controls $\lambda$.}
    \label{fig:Case0conv}%
\end{figure*}%
\captionsetup[subfloat]{position=top,labelformat=empty}
\begin{figure*}[!t]%
    \centering%
    \subfloatcolorbar{0.0cm}{-1.9cm}{0.044\textwidth}{"Figures/Case0/CBzs".png}%
    \begin{minipage}{0.90\textwidth}%
    \centering%
        \rotatebox{90}{\hspace{-3.2cm} $z$ \hspace{1.7cm} $\gamma$}
        \subcstageb{$\lambda = 10^{-6}$}{sfig:c2s1}{0.143\linewidth}{"Figures/Case0/Disk16_Case0_eps1".png}{"Figures/Case0/Disk16_Case0_eps1z".png}{80}{80}{80}{80}%
        \subcstageb{$\lambda = 10^{-4}$}{sfig:c2s2}{0.143\linewidth}{"Figures/Case0/Disk16_Case0_eps2".png}{"Figures/Case0/Disk16_Case0_eps2z".png}{80}{80}{80}{80}%
        \subcstageb{$\lambda = 10^{-3}$}{sfig:c2s3}{0.143\linewidth}{"Figures/Case0/Disk16_Case0_eps3".png}{"Figures/Case0/Disk16_Case0_eps3z".png}{80}{80}{80}{80}%
        \subcstageb{$\lambda = 10^{-2}$}{sfig:c2s4}{0.143\linewidth}{"Figures/Case0/Disk16_Case0_eps4".png}{"Figures/Case0/Disk16_Case0_eps4z".png}{80}{80}{80}{80}%
        \subcstageb{$\lambda = 10$}{sfig:c2s5}{0.143\linewidth}{"Figures/Case0/Disk16_Case0_eps5".png}{"Figures/Case0/Disk16_Case0_eps5z".png}{80}{80}{80}{80}%
        \subcstageb{$F_\nabla,$ $ a = 1$}{sfig:c2s6}{0.143\linewidth}{"Figures/Case1/Disk16_Case1_L2_6".png}{"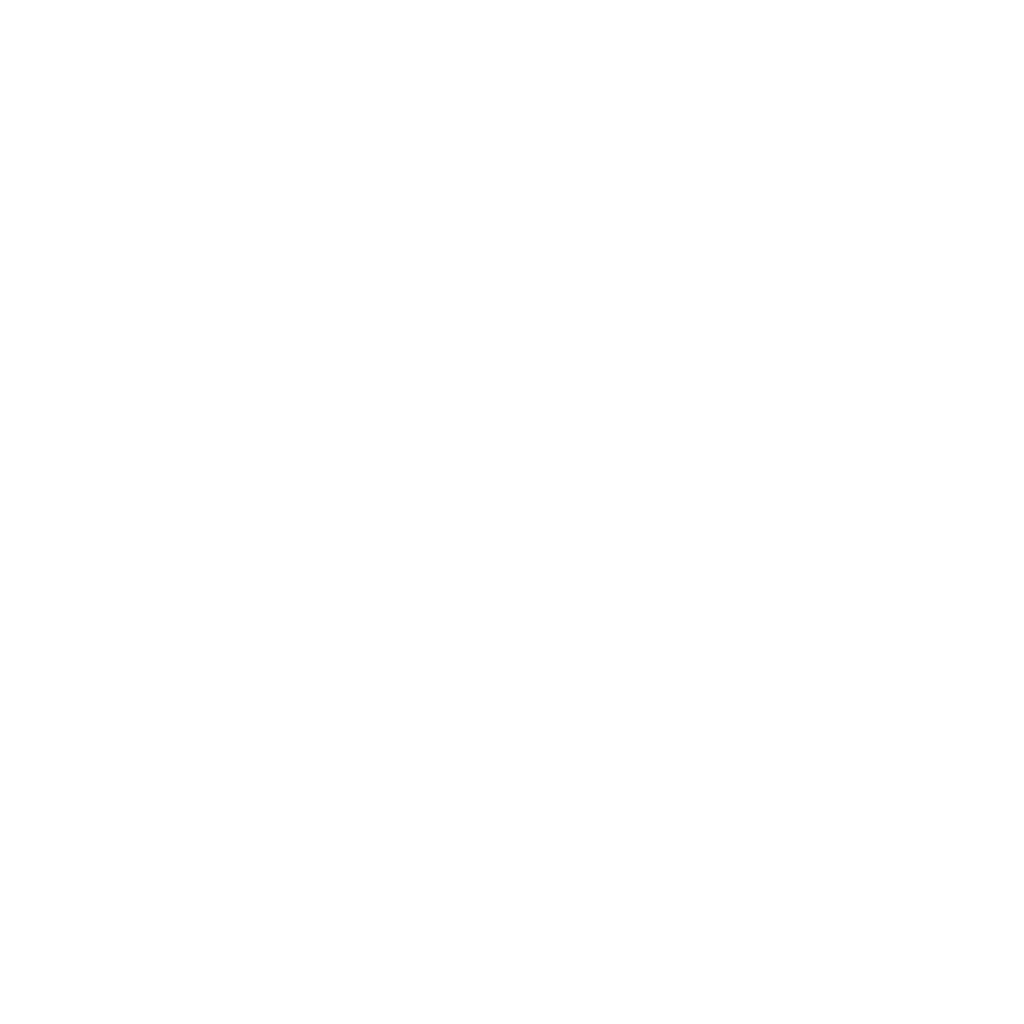"}{130}{130}{130}{130}%
    \end{minipage}%
    \subfloatcolorbar{0.0cm}{-1.9cm}{0.044\textwidth}{"Figures/Case0/CBs".png}%
    \caption{Case 1. Effects of the control parameter $\lambda$. Top row, first 5 columns: $F_\lambda$ regularized EIT reconstructions with control parameter values $\lambda = 10^{-6}, 10^{-4}, 10^{-3}, 10^{-2},10$. The regularization parameters are $\alpha=10^{-2}$ and $a= 1$. The last column shows $F_\nabla$ regularized solution with $a = 1$ for comparison. Bottom row: the respective reconstructions of the variable $z$. The color scale on the left corresponds to values of $z$ and the right one corresponds to the values of $\gamma$.}
    \label{fig:Case0}%
\end{figure*}%
\captionsetup[subfloat]{position=bottom}

\Cref{fig:Case0conv} shows convergence of the objective function in Case 1 with the tested controls $\lambda$. All reconstructions converge properly; the values of the objective functionals drop to less that 1/1000 of the initial value. Depending on the case and on the chosen parameters, the algorithm took from around 100 to 1000 iterations to stagnate\footnote{Ryzen 5950X with 32 GB of 3600 Mhz (16-19-19-39) DDR4 RAM, an iteration took 13.58 seconds to compute on average.}. Very large or small $\lambda$ yield worse fit for the data, since they do not allow the sharp edges to form properly in $\gamma$. Next, we will inspect this more closely.

\cref{fig:Case0} shows the M-S regularized reconstructions with six values of the control parameter $\lambda$ and fixed $\alpha = 10^{-2}$ in Case 1. We fix $a=1$ for the $F_\lambda$ regularizer throughout all the tests. For comparison, the last column shows a solution computed with $F_\nabla$ regularization.

Clearly, \cref{fig:Case0} shows that the solutions tend towards the $F_\nabla$ regularized solution if $\lambda$ is either very large or very small. The former behavior is obvious, since increasing $\lambda$ increases the weight of the $F_\nabla$ term of $z$ in \eqref{eq:Fe}, meaning that less spatial variation is expected in $z$ and thus $z \approx a = 1$ everywhere and no clear edges are formed in $\gamma$. The latter behavior, on the other hand, is caused by the discretization of $z$; recall that for a reconstruction sequence, $z \to 1$ in measure as $\lambda\downto 0$, and thus the area in which $z < 1$ tends to zero\footnote{For functional \eqref{eq:Fk1app}, this is equivalent to $k\to \infty$ and $z\to 0$ in measure, and the area where $z > 0$ tends to zero.}. 
However, since $z$ is discretized, the smallest area in which it is possible to set $z < 1$ is fixed, and thus, since this $z$ is a solution to \eqref{eq:minDisc}, increasing $1/\lambda^2$ over a certain threshold has to, instead, increase pointwise value of $z$ everywhere. This clearly means that the penalty of $\abs{\nabla \gamma}^2$ will tend to $1$ everywhere reverting the reconstruction back to the $F_\nabla$ regularized solution with $a =1$. This observation suggests that $\lambda$ should be chosen according to the discretization (i.e. the triangles sizes) of $z$. Based this test, we fix $\lambda = 10^{-3}$ for the following tests. 
\captionsetup[subfloat]{position=top,labelformat=empty}
\begin{figure*}[!t]
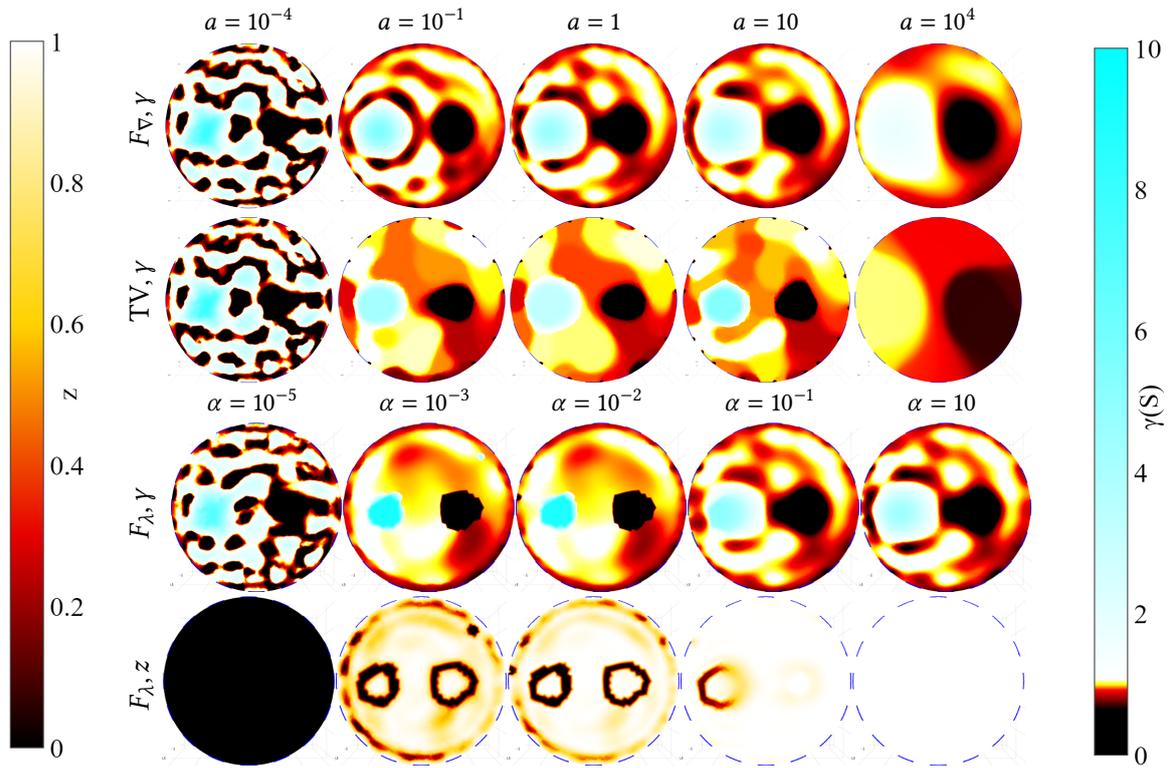
%
    \centering%
    \subfloatcolorbar{0.0cm}{-4.8cm}{0.099\textwidth}{"Figures/Case0/CBz".png}%
    \begin{minipage}{0.8\textwidth}%
    \centering%
        \rotatebox{90}{\hspace{-4.2cm}$F_\lambda,z$ \hspace{1.4cm} $F_\lambda,\gamma$ \hspace{2cm}  TV$,\gamma$ \hspace{1.3cm} $F_\nabla,\gamma$}
        \subcstagee{$a= 10^{-4}$}{$\alpha= 10^{-5}$}{"Figures/Case1/Disk16_Case1_L2_2".png}{"Figures/Case1/Disk16_TV_2".png}{"Figures/Case1/Disk16_Case1_1".png}{"Figures/Case1/Disk16_Case1_1z".png}%
        \subcstagee{$a= 10^{-1}$}{$\alpha= 10^{-3}$}{"Figures/Case1/Disk16_Case1_L2_5".png}{"Figures/Case1/Disk16_TV_5".png}{"Figures/Case1/Disk16_Case1_2".png}{"Figures/Case1/Disk16_Case1_2z".png}%
        \subcstagee{$a= 1$}{$\alpha= 10^{-2}$}{"Figures/Case1/Disk16_Case1_L2_6".png}{"Figures/Case1/Disk16_TV_6".png}{"Figures/Case1/Disk16_Case1_3".png}{"Figures/Case1/Disk16_Case1_3z".png}%
        \subcstagee{$a= 10$}{$\alpha= 10^{-1}$}{"Figures/Case1/Disk16_Case1_L2_7".png}{"Figures/Case1/Disk16_TV_7".png}{"Figures/Case1/Disk16_Case1_4".png}{"Figures/Case1/Disk16_Case1_4z".png}%
        \subcstagee{$a= 10^{4}$}{$\alpha= 10$}{"Figures/Case1/Disk16_Case1_L2_10".png}{"Figures/Case1/Disk16_TV_10".png}{"Figures/Case1/Disk16_Case1_5".png}{"Figures/Case1/Disk16_Case1_5z".png}%
    \end{minipage}%
    \subfloatcolorbar{0.0cm}{-4.7cm}{0.099\textwidth}{"Figures/Case0/CB".png}%
    \caption{Case 1. Effects of the regularisation parameter $\alpha$. First row: $F_\nabla$ regularised solution. Second row: $TV$ regularised solution. Third and forth row: $F_{\lambda}$ regularisation based solution of $\gamma$ and $z$. Each column corresponds to a different regularisation parameter values and the control parameter is $\lambda = 10^{-3}$. Note that the regularisation parameter in TV and $F_\nabla$ is the weight of the gradient term, while on the other, in $F_\lambda$ solution it is jump set regularization parameter.}
    \label{fig:Case1}%
\end{figure*}%
\captionsetup[subfloat]{position=bottom}

\Cref{fig:Case1} shows comparison between $F_\lambda$, TV, and $F_\nabla$ regularizations with varying regularization parameters $\alpha$ and $a$. By \Cref{fig:Case1}, as the regularization parameter $\alpha$ increases, $z$ tends to 1, and the solution becomes smooth, eventually approaching the $F_\nabla$ solution with the same value of $a=1$. With smaller regularization parameter values $\alpha = 10^{-4}$ and $\alpha = 10^{-3}$, $z$ becomes more spatially varying and in those areas where $z$ is small, edges are formed in $\gamma$. Setting $\alpha$ too small ($10^{-5}$) leads to a highly unstable solution. 

When compared to the $F_\nabla$ reconstructions, $F_\lambda$ reconstructions with $\alpha = 10^{-3}$ and $\alpha = 10^{-2}$ have sharper edges in locations where $z$ is small. Indeed, these edges are clearly even sharper than those in the $\TV$ regularized solution. Further, in locations where $z$ is closer to one, $F_\lambda$ regularized reconstructions have smooth variations, similar to those in $F_\nabla$ reconstructions. These variations are not present in the $\TV$ reconstructions. Hence, in this case, where the sharp-edged inclusions are placed on a smoothly varying background, $F_\lambda$ provides more suitable regularization than $\TV$ and $F_\nabla$; the relative error of the best $F_\lambda$ reconstruction, $\alpha = 10^{-2}$, is RE$=21.00$ \%. For TV\footnote{For a TV reconstruction with $a=10^{-3}$, which is not shown in \Cref{fig:Case1}, RE=$51.34$ \%.}, these values are $a = 10^{-1}$ and RE=$34.25$ \%. Based on this test, we fix $\alpha = 10^{-2}$ for $F_\lambda$ regularization and $a = 10^{-1}$ for TV.

In \cref{fig:Case1}, the reconstructions with parameters $\alpha = 10^{-3}$ to $\alpha = 10$ display artifacts on the border. These artifacts are most probably caused by the modelling errors due to the differences between the simulation and the inverse mesh. Similar artifacts are also visible in the $F_\nabla$ and $\TV$ regularized solutions, although they are more pronounced in the $F_\lambda$ solutions. These errors could be mitigated, for example, by the so-called approximation error method, but this would require Bayesian formulation of the reconstruction problem, and is out of the scope of this paper \cite{kaipio2006statistical,nissinen2009}.
\begin{table}
    \caption{Description of the numerical and experimental test cases. Here, $\gamma_\mathrm{bg}$ is the background conductivity and $\gamma_\mathrm{ci}$ and $\gamma_\mathrm{ri}$  refer respectively to the non-smooth conductive and resistive inclusions placed on the smooth/constant background. Note that Cases 1-7 use simulated data. Inclusion $\gamma_\mathrm{ci}$ and $\gamma_\mathrm{ri}$ in Case 8 describe cross-section of the objects. }
    \label{table:RE}
    \centering
    \footnotesize
    \begin{tabular}{ r r r r r r r r}
         & Case 1 & Case 2 & Case 3 & Case 4 & Case 5 & Case 6 & Case 7  \\
        \hline
        \hline
        RE ($F_\nabla$) \vline& 21.00 & 27.49 & 19.41 &  37.65  & 16.38 & 15.60 & 17.09\\
        RE ($\TV$) \vline& 34.25 & 38.83 & 19.20 & 41.72 & 17.76 & 28.55 & 16.82
    \end{tabular} 
\end{table}
\captionsetup[subfloat]{position=top,labelformat=empty}
\begin{figure*}[!t]
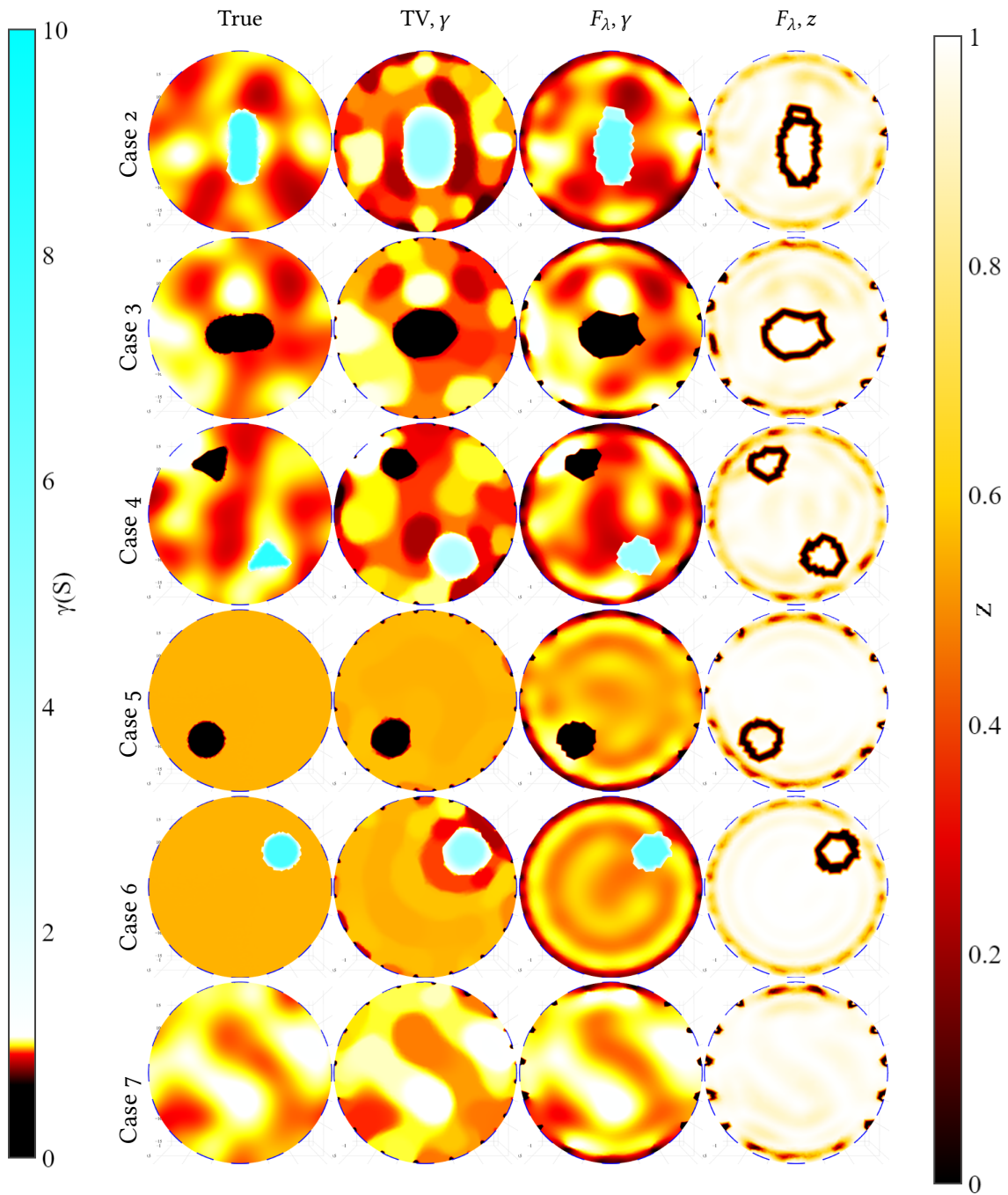
%
\centering%
\subfloatcolorbar{0.0cm}{-8.9cm}{0.082\textwidth}{"Figures/Case0/CBb".png}%
\begin{minipage}{0.8\textwidth}%
    \centering%
        \rotatebox{90}{\hspace{-16.6cm} Case 7 \hspace{1.7cm} Case 6 \hspace{1.7cm} Case 5 \hspace{1.7cm} Case 4 \hspace{1.7cm} Case 3 \hspace{1.7cm} Case 2}
        \subcstagef{True}{"Figures/Case3/Disk16_Case4_1t".png}{"Figures/Case3/Disk16_Case4_2t".png}{"Figures/Case3/Disk16_Case4_3t".png}{"Figures/Case3/Disk16_Case4_4t".png}{"Figures/Case3/Disk16_Case4_5t".png}{"Figures/Case3/Disk16_Case4_6t".png}%
        \subcstagef{$\TV,\gamma$}{"Figures/Case3/Disk16_Case4_1_TV".png}{"Figures/Case3/Disk16_Case4_2_TV".png}{"Figures/Case3/Disk16_Case4_3_TV".png}{"Figures/Case3/Disk16_Case4_4_TV".png}{"Figures/Case3/Disk16_Case4_5_TV".png}{"Figures/Case3/Disk16_Case4_6_TV".png}%
        \subcstagef{$F_\lambda,\gamma$}{"Figures/Case3/Disk16_Case4_1".png}{"Figures/Case3/Disk16_Case4_2".png}{"Figures/Case3/Disk16_Case4_3".png}{"Figures/Case3/Disk16_Case4_4".png}{"Figures/Case3/Disk16_Case4_5".png}{"Figures/Case3/Disk16_Case4_6".png}%
        \subcstagef{$F_\lambda,z$}{"Figures/Case3/Disk16_Case4_1z".png}{"Figures/Case3/Disk16_Case4_2z".png}{"Figures/Case3/Disk16_Case4_3z".png}{"Figures/Case3/Disk16_Case4_4z".png}{"Figures/Case3/Disk16_Case4_5z".png}{"Figures/Case3/Disk16_Case4_6z".png}%
    \end{minipage}%
    \subfloatcolorbar{0.0cm}{-8.8cm}{0.075\textwidth}{"Figures/Case0/CBzb".png}%
    \caption{Cases 2-7. Testing $\TV$ and $F_\lambda$ with multiple sets of simulated data. The regularization parameter for $\TV$ is $a=10^{-1}$ and the control parameter and the boundary regularization parameter of $F_\lambda$ are $\lambda = 10^{-3}$ and $\alpha = 10^{-2}$. First column: True conductivity. Second column: $\TV$ reconstruction. Third column: $F_\lambda$ reconstruction of the conductivity. Last column: $F_\lambda$ reconstruction of the control variable.}%
    \label{fig:Case3}%
\end{figure*}%
\captionsetup[subfloat]{position=bottom}

\Cref{fig:Case3} shows the results of Cases 2-7. When the true conductivity has non-homogenous background and contains either a sharped-edged conductive inclusion, resistive inclusion or both (Cases 2-4), the $F_\nabla$ reconstructions have similar features. Further, when the true conductivity has only sharp edges inclusions (Cases 5-6) or only smooth inclusions (Cases 7), the $F_\nabla$ reconstructions also reflect this. 

In the $\TV$ reconstructions, the smooth inclusions are staircased, as expected. \Cref{table:RE} reveals, however, that the staircasing has insignificant impact on RE in Cases 3 and 7, resulting slightly better results with $\TV$ regularization. In these cases, with the chosen parameters, $\TV$ regularization better incorporates the \emph{a priori} information that the conductivity contrast should be small. Nevertheless, the differences in REs of $F_\nabla$ and TV are small. The same applies to Case 5, albeit $F_\nabla$ results in slightly smaller RE than TV. More significantly, in Cases 2, 4, and 6 where the conductivity contrasts are high, $F_\nabla$ regularization clearly outperforms TV. Note that there is some inhomogeneity in the background of Cases 6 and 7, where the true background is constant. Again some errors are present due to the modelling errors.

\begin{remark}The behavior observed in \cref{fig:Case0} implies a mesh refinement scheme through the control variable $\lambda$. The idea is that, if the element sizes within the mesh are fixed, as $\lambda \downto 0$, the cost of setting $z=0$ within an element increases, meaning that in the reconstruction of $z$, the elements where $z=0$ will eventually disappear. Now comparing reconstructions of $z$ with two significantly different $\lambda$ should tell us where the element sizes are too large to set $z=0$, meaning that we need to subdivide these elements. In principle, this can be repeated until numerical precision becomes an issue.\\ We tested this scheme in EIT, but due to relatively low spatial resolution of EIT, this had no noticeable impact on the reconstruction quality. We presume, however, that other imaging modalities would benefit from this scheme.\end{remark}%

\subsection{Experimental studies}\label{ssec:exps}
We also evaluate $F_\lambda$ regularization with experimental data (Case 8). The measurement setup consists of a cylindrical tank that is filled with tap water. The height of the water is $7$ cm and the inner and the outer diameters of the tank are $28.3$ cm and $31.3$ cm, respectively. The tank also has a cylindrical steel rod and a cylindrical plastic rod. The base areas of these cylinders are $7.21$ cm$^2$ and $29.13$ cm$^2$, respectively. Otherwise, this tank matches the geometry used in the numerical case; the 16 electrodes are placed evenly on the border and the length of an electrode is around $1/32$ of the border length (see \Cref{fig:Disk_meas}).

The measurements are taken using an EIT device manufactured by Rocsole Ltd. This device sequentially excites each electrode to a predetermined potential, grounds the others, and measures the currents caused by the potential difference. We used 56kHz excitation frequency. The device samples the currents with $1$ Mhz frequency, and automatically computes the current amplitudes from these samples using FFT. The current amplitudes are read along with the excitation voltages from an ASCII encoded text file.

To evaluate our numerical scheme in practice, we use exactly the same numerical setup as in Cases 2-7 to solve the conductivity and exactly the same parameters for $F_\lambda$. For comparison, we pick TV with $a = 10^{-5}$, as this regularization parameter yielded reconstructions with most accurately sized inclusions. We also show the solution for $a = 10^{-1}$, which yielded the best results in Case 1 and was subsequently used as a comparison in Cases 2-7.

To analyze the reconstructions, we compute the areas of the conductive and resistive inclusions. First we use the half width at half maximum (HWHM) to define the conductive inclusion and then removing the areas associated with the conductive inclusion, we use HWHM again to define the resistive inclusion.
\subsubsection{Results: Experimental data}\label{ssec:resexp}

\begin{table}
    \caption{The true base areas of the conductive and resistive inclusions and the base areas computed from the reconstructions. The conductive inclusion for TV with $a= 10^{-1}$ is defined by $70$ \% of the maximum value since it was more in line with the visual shape. The value computed using HWHM was $608.10$ cm$^2$; almost the area of the whole domain without the resistive inclusion. }
    \label{table:Area}
    \centering
    \footnotesize
    \begin{tabular}{ r c c c c}
         & True & $F_\nabla$ & TV ($a = 10^{-5}$) & TV ($a = 10^{-1}$)   \\
        \hline
        \hline
        Area of the conductive inclusion (cm$^2$) \vline& \AreaRealCon & \AreaMSCon & \AreaTVCon & \AreaTVbCon \\
        Area of the resistive inclusion (cm$^2$) \vline& \AreaRealRes & \AreaMSRes & \AreaTVRes & \AreaTVbRes
    \end{tabular} 
\end{table}

\Cref{fig:Disk_meas} shows an $F_{\lambda}$ and two $\TV$ regularized ($a = 10^{-5}$ and $a = 10^{-1}$) solutions computed from the measurement data (Case 8). The background conductivity in these reconstructions is slightly inhomogeneous, although the true background was constant because the tank was filled with properly mixed saline that was at the same temperature as the tank.

The maximum conductivity in the location of the steel bar is about five times larger in the M-S reconstruction than in the TV reconstruction with $a=10^{-5}$ and around seven times larger than in the TV reconstruction with $a=10^{-1}$.

\Cref{table:Area} shows the true base areas of the inclusions as well as the base areas computed from the $F_\nabla$ and the TV reconstructions. The errors in the area estimates for the resistive inclusion are within 14 \% of the real value for all reconstructions: $3.36$ \%, $7.66$ \%, and $13.56$ \% for $F_\nabla$, TV ($a=10^{-5}$), and TV ($a=10^{-1}$), respectively. In $F_\nabla$ reconstruction, the error in the area of the conductivity inclusion is also in a similar range (5.13 \%). For both TV reconstructions, however, area of the conductive inclusion is significantly larger than the real one; in TV reconstruction with $a=10^{-5}$, the inclusion it is around 3 times larger than the real one while with $a=10^{-1}$ it is almost eight times larger. The errors in these base area estimates were 195.15 \% and 653.82 \%, respectively.
\begin{figure}[!tbp]
    \centering
    \begin{minipage}{0.9\textwidth}  
        \centering
        \subfloatrecoib{Photo of the measurement setup}{0.22\textwidth}{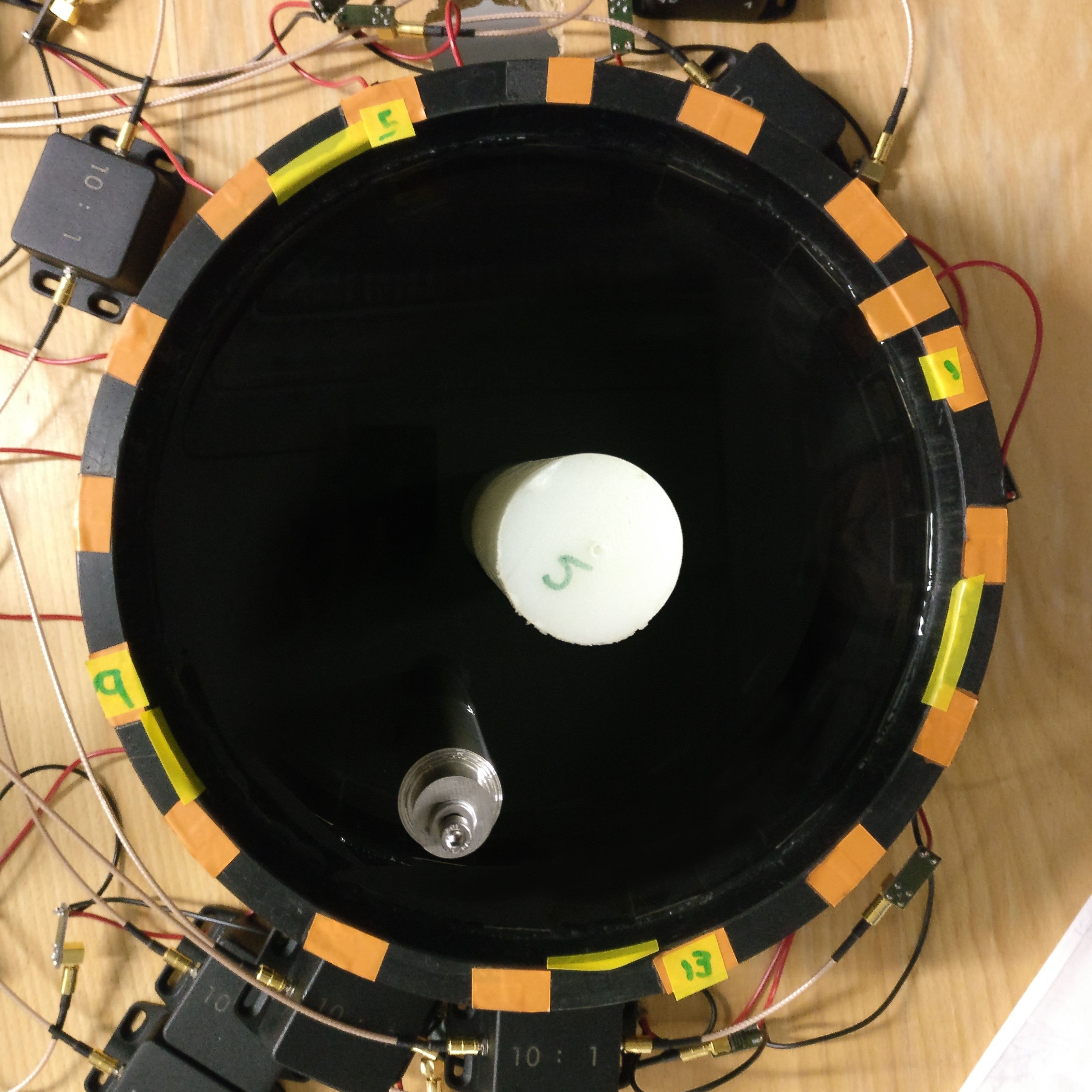}
        \subfloatrecoib{\quad\quad\quad$F_\lambda$ solution\\ \phantom{aaa}($\alpha = 10^{-2}$, $\lambda = 10^{-3}$)}{0.22\textwidth}{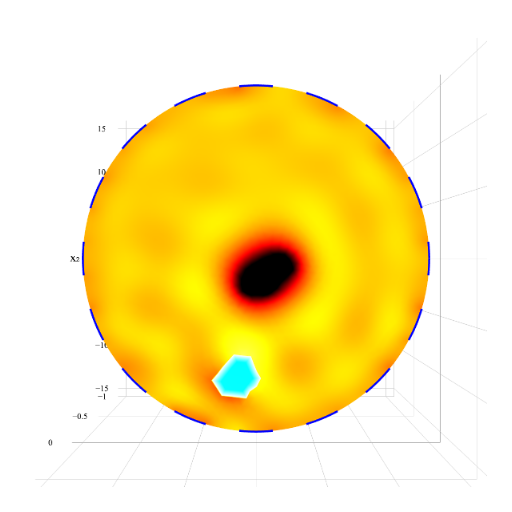}
        \subfloatrecoib{$\TV$ solution ($a = 10^{-5}$)}{0.22\textwidth}{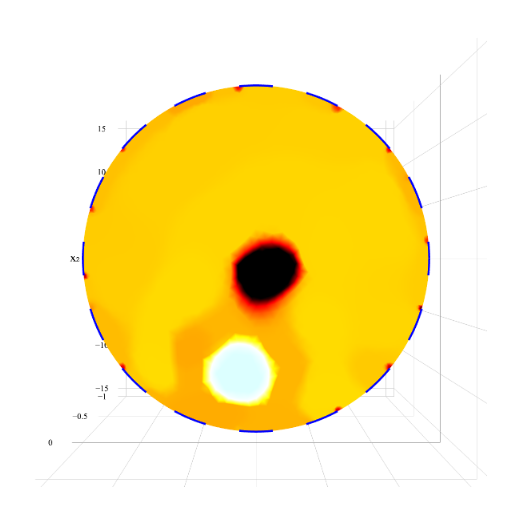}
        \subfloatrecoib{$\TV$ solution ($a = 10^{-1}$)}{0.22\textwidth}{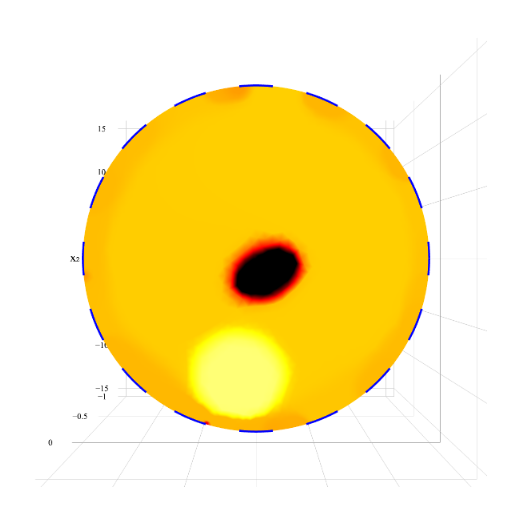}\\     
    \end{minipage}
    \subfloatcolorbar{-0.3cm}{-2.12cm}{1.1cm}{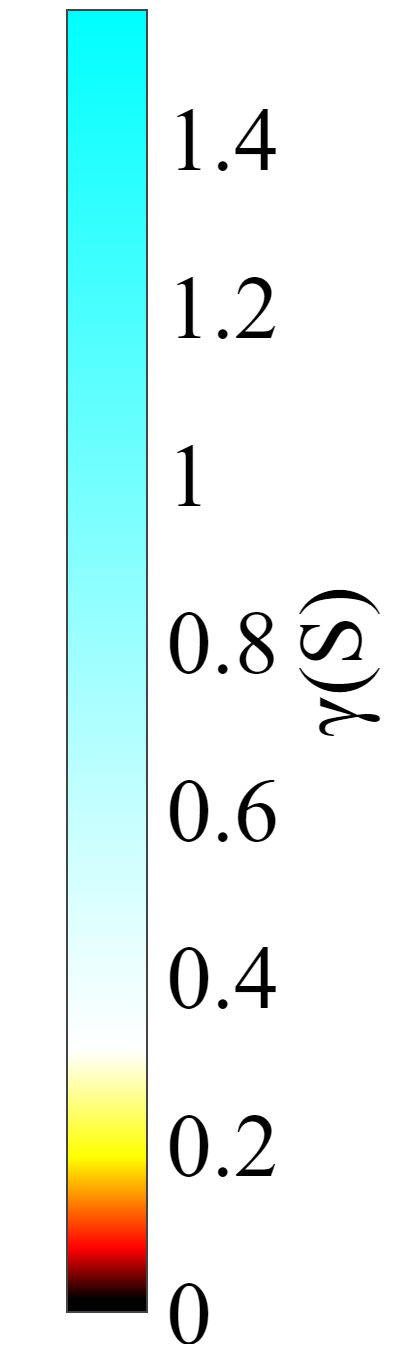}%
    \caption{Case 8. A test with measurement data. Photo of the measurement setup (Left) and reconstructions with $F_\lambda$ and $\TV$ regularization.}
    \label{fig:Disk_meas}
\end{figure}
\subsection{Discussion}
Based on these numerical tests, we conclude that $F_{\lambda}$ is feasible for EIT in cases where the conductivity is expected to contain both smooth and non-smooth inclusions. Especially high contrast inclusions with sharp edges are reconstructed accurately. The $F_{\lambda}$ regularizer seems suitable also when it is not known beforehand which type of these inclusions the target conductivity has; in such cases, even if the true background conductivity is homogenous, some smooth variations maybe still be present in the reconstructions. Note also that in all test cases, the same value of parameter $a$ was used in $F_\lambda$ regularization. Our numerical tests showed that the estimates were fairly robust with respect to the choice of this parameter; for example, setting $a=10$ or $a=10^{-1}$ did not improve the image quality. This was an expected result, because also the $L_2$ regularized solutions tolerate about two orders of magnitude variation in the regularization parameter without significant change in the reconstruction quality (see \cref{fig:Case1}).

\section{Conclusions}

The M-S functional is a popular starting point for image segmentation: finding objects within an image. Also in EIT, the object to be imaged can consist of sharp edged inclusions on a smoothly varying background, or vice versa. Thus far, however, only a few approaches to reconstructing sharp and smooth features in EIT simultaneously have been proposed, or thoroughly studied.

In this paper, we investigated the approximation of the M-S functional in EIT. We showed that, with a small modification, the functional originally proposed by Ambrosio and Tortorelli is applicable to the approximation of the M-S functional in combination with the EIT data fidelity based on the complete electrode model.
Through numerical and experimental studies we showed that the M-S regularization and the approximating functional offer a viable alternative to conventional TV and smooth gradient regularization if the target conductivity is expected to contain both smooth and non-smooth inclusions. Further, the approach was shown to be feasible also when it is not known beforehand whether the target has smooth or non-smooth inclusions.

\appendix 

\section{The proofs of the \texorpdfstring{$\Gamma$}{Gamma}-liminf and \texorpdfstring{$\Gamma$}{Gamma}-limsup inequalities}\label{sec:Gc}

%
In this section, we proved proofs for the $\Gamma$-liminf and $\Gamma$-limsup inequalities for $\bar F_k$. The next lemma shows the $\Gamma$-liminf inequality.

\begin{lemma}\label{lemma:liminf}
    Suppose that $\alpha > 0$, $\epsilon_k \in [0,1]$, and \cref{Ocond} holds. Let $\lbrace (\gamma_k,z_k) \rbrace_{k\in\N} \subset  \mathscr{B}(\Omega) \times \mathscr{B}(\Omega)$ so that $(\gamma_k,z_k)\to (\gamma,z) \in \mathscr{B}(\Omega) \times \mathscr{B}(\Omega)$ in measure. Then the $\Gamma$-liminf inequality \eqref{eq:Gamma1} holds.
\end{lemma}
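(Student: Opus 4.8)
The plan is to \emph{reduce} the statement to the $\Gamma$-$\liminf$ inequality already established by Ambrosio and Tortorelli in \cite{ambrosio1990approximation} for the unmodified functionals $F_k$, and then to \emph{upgrade} the resulting lower bound using the pointwise bounds built into the domain $\bar D_{k,N}(\Omega)$. Write $F^{AT}$ for the Ambrosio--Tortorelli limit functional on $\mathscr{B}(\Omega) \times \mathscr{B}(\Omega)$, namely $F^{AT}(\gamma,z) = \int_\Omega \abs{\nabla \gamma}^2 \ddd\Lmeas + \alpha \mathscr{H}^{N-1}(S_\gamma)$ when $\gamma \in \GSBV(\Omega)$ and $z = 0$, and $F^{AT}(\gamma,z) = +\infty$ otherwise. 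The first observation is that $\bar F_k \ge F_k$ pointwise on $\mathscr{B}(\Omega) \times \mathscr{B}(\Omega)$: on $\bar D_{k,N}(\Omega) \cap \{z \le 1-\epsilon_k\}$ the two functionals coincide, and elsewhere $\bar F_k = +\infty$ (the extra constraints $\gamma \in [\gamma_m,\gamma_M]$ and $z \le 1-\epsilon_k \le 1$ only shrink the effective domain). Hence $\liminf_{k} \bar F_k(\gamma_k,z_k) \ge \liminf_{k} F_k(\gamma_k,z_k)$, and since $(\gamma_k,z_k) \to (\gamma,z)$ in measure, the $\Gamma$-$\liminf$ inequality of \cite{ambrosio1990approximation} bounds this last quantity below by $F^{AT}(\gamma,z)$.

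Now set $L \defeq \liminf_{k} \bar F_k(\gamma_k,z_k)$. If $L = +\infty$ the claim is trivial, so assume $L < \infty$. Then $F^{AT}(\gamma,z) \le L < \infty$, which by definition of $F^{AT}$ forces $z = 0$ a.e., $\gamma \in \GSBV(\Omega)$, and $\int_\Omega \abs{\nabla \gamma}^2 \ddd\Lmeas + \alpha \mathscr{H}^{N-1}(S_\gamma) = F^{AT}(\gamma,0) \le L$. (Alternatively, $z = 0$ can be seen directly: a liminf-realising subsequence has $\bar F_{k_j}(\gamma_{k_j},z_{k_j}) \le C$, hence $\norm{z_{k_j}}_{L^2}^2 \le 4C/(\alpha^2 k_j^2) \to 0$, so $z_{k_j} \to 0$ in measure.) It then remains only to verify $\gamma \in [\gamma_m,\gamma_M]$.

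For this, pick a subsequence $(k_j)$ with $\bar F_{k_j}(\gamma_{k_j},z_{k_j}) \to L$; for $j$ large this value is finite, so $(\gamma_{k_j},z_{k_j}) \in \bar D_{k_j,N}(\Omega)$ and in particular $\gamma_m \le \gamma_{k_j} \le \gamma_M$ a.e. Since $\gamma_{k_j} \to \gamma$ in measure, a further subsequence converges a.e., so the bounds pass to the limit and $\gamma \in [\gamma_m,\gamma_M]$. Combining $\gamma \in \GSBV(\Omega)$ with $\gamma \in [\gamma_m,\gamma_M]$ and invoking the identity $\GSBV(\Omega) \cap [\gamma_m,\gamma_M] = \SBV(\Omega) \cap [\gamma_m,\gamma_M]$ recorded earlier gives $\gamma \in \SBV(\Omega)$. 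Therefore, by the definition \eqref{eq:defF}, $F(\gamma,z) = F(\gamma,0) = \int_\Omega \abs{\nabla \gamma}^2 \ddd\Lmeas + \alpha \mathscr{H}^{N-1}(S_\gamma) = F^{AT}(\gamma,0) \le L = \liminf_{k} \bar F_k(\gamma_k,z_k)$, which is \eqref{eq:Gamma1}.

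I expect no serious obstacle: the $\Gamma$-$\liminf$ inequality is inherited essentially verbatim from \cite{ambrosio1990approximation}. The one point that genuinely requires the modification of the functional to be tracked is that the target $F$ is \emph{larger} than $F^{AT}$ --- it is $+\infty$ outside $[\gamma_m,\gamma_M]$ --- so domination by $F^{AT}$ alone does not close the argument; one must use the a priori bound $\gamma_k \in [\gamma_m,\gamma_M]$ encoded in $\bar D_{k,N}(\Omega)$, which is available precisely because $\bar F_k(\gamma_k,z_k)$ is finite along a liminf-realising subsequence, and then reconcile $F^{AT}(\gamma,0)$ with $F(\gamma,0)$ through $\GSBV \cap [\gamma_m,\gamma_M] = \SBV \cap [\gamma_m,\gamma_M]$.
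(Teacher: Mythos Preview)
Your proof is correct and follows essentially the same route as the paper: both reduce to the Ambrosio--Tortorelli $\Gamma$-$\liminf$ inequality via the pointwise inequality $\bar F_k \ge F_k$, and both extract the constraint $\gamma \in [\gamma_m,\gamma_M]$ from a liminf-realising subsequence before identifying $F(\gamma,0)$ with the Ambrosio--Tortorelli limit. You are slightly more explicit than the paper in distinguishing $F^{AT}$ from $F$ and in verifying $z=0$, but the argument is the same.
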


\begin{proof}
    Suppose that $(\gamma_k,z_k) \to (\gamma,z) \in \mathscr{B}(\Omega) \times \mathscr{B}(\Omega)$ in measure. We may also assume that $\liminf_{k\to\infty}\bar F_k(\gamma_k,z_k) = m < \infty$ as otherwise the inequality holds trivially. Also $m \ge 0$, since $\bar F_k(\gamma,z) \ge 0$ for any $(\gamma,z)$. Since $0 \le m < \infty$, we can find a bounded subsequence $\lbrace \bar F_i(\gamma_i,z_i) \rbrace_{i\in\N}$ such that $\lim_{i\to\infty}\bar F_i(\gamma_i,z_i) = m$ and $\gamma_i \to \gamma$ in measure. The boundedness of $\lbrace \bar F_i(\gamma_i,z_i) \rbrace_{i\in\N}$ implies that $\gamma_i \in [\gamma_m,\gamma_M]$ (almost everywhere) which ensures that $\gamma \in [\gamma_m,\gamma_M]$. Since $\gamma \in [\gamma_m,\gamma_M]$, the $\Gamma$-liminf inequality holds for $F_k$ and $F$ by \cite[Theorem 1.1]{ambrosio1990approximation}. Further, we also have $\bar F_k(\gamma,z) \ge F_k(\gamma,z)$ for any $(\gamma,z) \in \mathscr{B}(\Omega) \times \mathscr{B}(\Omega)$ and $0 \le \epsilon_k \le 1$. Hence
    \begin{equation*}
        \liminf_{k \to \infty} \bar F_k(\gamma_k,z_k) \ge \liminf_{k \to \infty} F_k(\gamma_k,z_k) \ge F(\gamma,0).
        \qedhere
    \end{equation*}
\end{proof}

Now it remains to prove the existence of a reconstruction sequence for the $\Gamma$-limsup inequality \eqref{eq:Gamma2}. 
Ambrosio and Tortorelli show in \cite[Proposition 5.1]{ambrosio1990approximation} that for any $\gamma \in SBV(\Omega)\cap L^\infty(\Omega)$ and $z=0$ there exist $(y_k, z_k) \in (H^1(\Omega) \times H^1(\Omega) ) \cap ([-\infty,\infty]\times [0,1])$ with $(y_k, z_k) \to (\gamma,0)$ satisfying
\begin{equation} \label{eq:cond51}
    \limsup_{k\to\infty}F_k(\gamma_k,z_k) 
    \le \int_\Omega \abs{\nabla \gamma}^2 \ddd\Lmeas + \limsup_{\rho \downto 0} \frac{\Lmeasof(\lbrace x \in \Omega \mid \dist(x,S_\gamma) 
    < \rho \rbrace)}{2\rho},
\end{equation}
Then, under \cref{Ocond}, they use this estimate to prove $\Gamma$-limsup inequality in \cite[Proposition 5.2-5.3]{ambrosio1990approximation}.
The restrictions $\gamma_k \in [\gamma_m,\gamma_M]$ and $z_k \in [0,1 - \epsilon_k]$ do not hold for the reconstruction sequences of \cite{ambrosio1990approximation} and therefore, to adapt the work of \cite{ambrosio1990approximation} to our modified $\bar F_k$ we need to adapt the reconstruction sequences and proof of \cite[Proposition 5.1]{ambrosio1990approximation}. We do this in the next lemma. We can then use \cite[Proposition 5.3]{ambrosio1990approximation} on this adapted sequence since they do not depend on the explicit form of the reconstruction sequence.

\begin{lemma}\label{lemma:Felimsup}
    Suppose $\Omega \subset \R^{N}$ is an open bounded domain and that $\alpha > 0$. Given any $\gamma \in SBV(\Omega)$ and $\epsilon_k \downto 0$, there exists a sequence $\lbrace(\gamma_k,z_k) \rbrace_{k\in\N} \subset \bar D_{k,N}(\Omega)$ with $(\gamma_k,z_k) \to (\gamma,0)$ in measure and
    \begin{equation}\label{eq:Gamma21}
        \limsup\limits_{k \to +\infty} \bar F_k( \gamma_k, z_k) 
        \le \int_\Omega \abs{\nabla \gamma}^2 \ddd\Lmeas + \limsup_{\rho \downto 0} \frac{\Lmeasof(\lbrace x \in \Omega \mid \dist(x,S_\gamma) < \rho \rbrace)}{2\rho}.
    \end{equation}
\end{lemma}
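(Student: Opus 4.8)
The plan is to take the recovery sequence $(\gamma_k,z_k)$ constructed by Ambrosio and Tortorelli in the proof of \cite[Proposition~5.1]{ambrosio1990approximation} — which has $\gamma_k\in H^1(\Omega)$, $z_k\in H^1(\Omega)$ with $0\le z_k\le 1$, $(\gamma_k,z_k)\to(\gamma,0)$ in measure, and $\limsup_{k\to\infty}F_k(\gamma_k,z_k)$ bounded by the right-hand side of \eqref{eq:cond51} (equivalently of \eqref{eq:Gamma21}) — and to force it into $\bar D_{k,N}(\Omega)$ by two one-Lipschitz truncations, checking that neither spoils the energy estimate. If the right-hand side of \eqref{eq:Gamma21} is $+\infty$ there is nothing to prove, so I assume it is finite; then $\nabla\gamma\in L^2(\Omega)$ and $\Lmeasof(\{x\mid\dist(x,S_\gamma)<\rho\})\le C\rho$ for all small $\rho>0$. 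Since the conclusion requires convergence in measure to $\gamma$ of functions in $[\gamma_m,\gamma_M]$, necessarily $\gamma\in[\gamma_m,\gamma_M]$ a.e.\ (this is the only case in which the lemma is needed), so with $T(r)\defeq\max\{\gamma_m,\min\{\gamma_M,r\}\}$ we have $T\circ\gamma=\gamma$. Set $\hat\gamma_k\defeq T\circ\gamma_k$ and $\hat z_k\defeq\min\{z_k,1-\epsilon_k\}$. Because $T$ and $r\mapsto\min\{r,1-\epsilon_k\}$ are $1$-Lipschitz, $\abs{\nabla\hat\gamma_k}\le\abs{\nabla\gamma_k}$ and $\abs{\nabla\hat z_k}\le\abs{\nabla z_k}$ a.e., $(\hat\gamma_k,\hat z_k)\to(\gamma,0)$ in measure (using $T\circ\gamma=\gamma$ and $0\le\hat z_k\le z_k$), and $\hat\gamma_k\in H^1(\Omega)\cap[\gamma_m,\gamma_M]$, $\hat z_k\in H^1(\Omega)\cap[0,1-\epsilon_k]$; the verification that $\phi\circ\hat z_k\in H^1(\Omega)$ and $\Psi(n\wedge\hat\gamma_k\vee-n,\hat z_k)\in H^1(\Omega)$ for all $n$ — hence $(\hat\gamma_k,\hat z_k)\in\bar D_{k,N}(\Omega)$ with $\hat z_k\le 1-\epsilon_k$ — is routine, exactly as in the proof of \cref{thm:FGaexists}, using that $\phi$ is $C^1$ on $[0,1]$ with $\phi(0)=0$ and that $\hat\gamma_k$ is bounded.

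Next I compare $\bar F_k(\hat\gamma_k,\hat z_k)$ with $F_k(\gamma_k,z_k)$ term by term on the partition of $\Omega$ into $A_k\defeq\{z_k\ge 1-\epsilon_k\}$ and its complement. On $\Omega\setminus A_k$ one has $\hat z_k=z_k$, so $(1-\hat z_k^2)^{2k}=(1-z_k^2)^{2k}$ and $\nabla\hat z_k=\nabla z_k$ a.e.\ there, and together with $\abs{\nabla\hat\gamma_k}\le\abs{\nabla\gamma_k}$ and $\tfrac14(\alpha k\hat z_k)^2\le\tfrac14(\alpha k z_k)^2$ (since $0\le\hat z_k\le z_k$) the integrand of $\bar F_k(\hat\gamma_k,\hat z_k)$ is pointwise dominated by that of $F_k(\gamma_k,z_k)$. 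On $A_k$ the function $\hat z_k\equiv 1-\epsilon_k$ is constant, so $\nabla\hat z_k=0$ a.e., again $\tfrac14(\alpha k\hat z_k)^2\le\tfrac14(\alpha k z_k)^2$, and the only term that can exceed its counterpart is $\abs{\nabla\hat\gamma_k}^2(1-(1-\epsilon_k)^2)^{2k}$. Writing $\eta_k\defeq(1-(1-\epsilon_k)^2)^{2k}=(\epsilon_k(2-\epsilon_k))^{2k}$ and using $\abs{\nabla\hat\gamma_k}\le\abs{\nabla\gamma_k}$, this yields
\begin{equation*}
    \bar F_k(\hat\gamma_k,\hat z_k)\;\le\;F_k(\gamma_k,z_k)+\eta_k\int_{A_k}\abs{\nabla\gamma_k}^2\ddd\Lmeas,
\end{equation*}
so it suffices to show $\eta_k\int_{A_k}\abs{\nabla\gamma_k}^2\ddd\Lmeas\to 0$, as then $\limsup_k\bar F_k(\hat\gamma_k,\hat z_k)\le\limsup_k F_k(\gamma_k,z_k)$ is bounded by the right-hand side of \eqref{eq:Gamma21}.

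For this last step I use the quantitative features of the construction in \cite[Proposition~5.1]{ambrosio1990approximation}: $z_k$ is a nonincreasing function of $\dist(\cdot,S_\gamma)$ supported in the layer $T_k\defeq\{\dist(\cdot,S_\gamma)<\rho_k\}$ with $\rho_k=1/k^2$, and $\gamma_k$ interpolates the bounded function $\gamma$ across $T_k$ so that $\int_{T_k}\abs{\nabla\gamma_k}^2\ddd\Lmeas\le C\rho_k^{-1}$ for large $k$, with $C$ depending only on $\gamma_M-\gamma_m$. Since $A_k\subset T_k$ (because $z_k$ vanishes off $T_k$ and $1-\epsilon_k>0$), we get $\Lmeasof(A_k)\le\Lmeasof(T_k)\le C'\rho_k$ from the Minkowski estimate above, and $A_k\subset T_k$ gives $\int_{A_k}\abs{\nabla\gamma_k}^2\ddd\Lmeas\le C\rho_k^{-1}$; hence $\eta_k\int_{A_k}\abs{\nabla\gamma_k}^2\ddd\Lmeas\le C''k^2\eta_k$. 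Since $\epsilon_k\downto 0$ forces $\epsilon_k(2-\epsilon_k)\to 0$, eventually $\eta_k\le 4^{-k}$, so $k^2\eta_k\to 0$, which finishes the proof. The main obstacle is precisely this last paragraph: one must extract from the (fairly intricate) construction of \cite[Proposition~5.1]{ambrosio1990approximation} the quantitative information that the transition layer has width $\rho_k\sim k^{-2}$ and that $\abs{\nabla\gamma_k}\lesssim\rho_k^{-1}$ on it, since the bare statement of that proposition records only the limsup inequality \eqref{eq:cond51}. Everything downstream — passing from \eqref{eq:Gamma21} to the full $\Gamma$-$\limsup$ inequality \eqref{eq:Gamma2} under \cref{Ocond} — then follows verbatim from \cite[Propositions~5.2--5.3]{ambrosio1990approximation}, whose arguments do not refer to the explicit form of the recovery sequence.
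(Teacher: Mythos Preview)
Your approach is correct in outline and genuinely different from the paper's. Whereas the paper rebuilds the recovery sequence from scratch with the constraints $\gamma_k\in[\gamma_m,\gamma_M]$ and $z_k\le 1-\epsilon_k$ baked in (taking $\gamma_k=(1-\Psi_k)\gamma+\gamma_m\Psi_k$ and capping $z_k$ at $1-\epsilon_k$ near $S_\gamma$, then re-deriving every term of the Ambrosio--Tortorelli estimate), you post-process the original AT sequence by two $1$-Lipschitz truncations and reduce everything to the single error term $\eta_k\int_{A_k}|\nabla\gamma_k|^2$. This is tidier and more modular: the pointwise comparison on $\Omega\setminus A_k$ and the estimate $\bar F_k(\hat\gamma_k,\hat z_k)\le F_k(\gamma_k,z_k)+\eta_k\int_{A_k}|\nabla\gamma_k|^2$ are both correct, and the super-polynomial decay of $\eta_k=(\epsilon_k(2-\epsilon_k))^{2k}$ kills any polynomial growth of the integral. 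The paper's approach buys explicit control at each step but at the cost of redoing most of \cite[Proposition~5.1]{ambrosio1990approximation}; your approach reuses that proposition nearly as a black box.

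Two small inaccuracies in your quantitative claims about the AT construction should be fixed, though they do not break the argument. First, $z_k$ does \emph{not} vanish off the transition layer: it equals $\eta_k=\tfrac{1}{k}\sqrt{\int_0^1(1-s^2)^k\,ds}>0$ there. The inclusion $A_k\subset(S_\gamma)_{a_k+b_k}$ still holds, but the reason is that $\eta_k<1-\epsilon_k$ for large $k$, not that $z_k$ vanishes. Second, there are two distinct scales: the $\gamma$-layer $(S_\gamma)_{b_k}$ where $\Psi_k\ne 0$, with $b_k$ satisfying $k^2 b_k\to 0$ (so $b_k=o(k^{-2})$, not $\sim k^{-2}$), and the wider $z$-layer $(S_\gamma)_{a_k+b_k}$ with $a_k\sim k^{-1/4}$. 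Your set $A_k$ may extend into the wider layer, but on $A_k\setminus(S_\gamma)_{b_k}$ one has $\gamma_k=\gamma$ and hence $\eta_k\int|\nabla\gamma|^2\to 0$ trivially; the polynomial blow-up $\int_{(S_\gamma)_{b_k}}|\nabla\gamma_k|^2\lesssim 1/b_k$ occurs only on the inner layer. Since $b_k$ is a free parameter in the AT construction subject only to $k^2 b_k\to 0$, you should state that you \emph{choose} it polynomially (e.g.\ $b_k=k^{-3}$), which then makes $\eta_k/b_k\to 0$ immediate. With these clarifications your proof is complete.
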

\begin{proof}
    For simplicity of notation, we assume that $\alpha = 1$.
    Moreover, we may assume that
    \begin{equation}\label{eq:recoas}
        \gamma \in [\gamma_m,\gamma_M],\quad\abs{\nabla \gamma} \in L^2(\Omega),
        \quad\text{and}\quad L 
        \defeq \limsup\limits_{\rho \downto 0} \frac{\Lmeasof(\lbrace x \in \Omega \mid \text{dist}(x,S_\gamma) 
        < \rho \rbrace)}{2 \rho} 
        < \infty,
    \end{equation}
    since otherwise \eqref{eq:Gamma21} holds trivially. If $\gamma \in H^1(\Omega)$, we may simply choose $\gamma_k \equiv \gamma$ and $z_k \equiv 0$ for all $k \in \N$. More generally, for $S_\gamma$ the jumpset of $\gamma$, define the neighborhood
    $$
        (S_\gamma)_{b_k} \defeq \left\lbrace x \in \Omega \mid \text{dist}(x,S_\gamma) < b_k \right\rbrace.
    $$
    For the reconstruction sequence of $z$, we simply restrict $z_k$ chosen by Ambrosio and Tortorelli \cite[Identity 5.5.]{ambrosio1990approximation} above by $1-\epsilon_k$, taking
    \begin{equation}\label{eq:zk}
        z_k = 1-\epsilon_k \text{ on } (S_\gamma)_{b_k},\quad\eta_k\text{ on } \Omega\backslash(S_\gamma)_{a_k+b_k},\quad\text{and}\quad \hat z_k\text{ on }(S_\gamma)_{a_k+b_k}\backslash(S_\gamma)_{b_k},
    \end{equation}
    where $\hat z_k \in [0,1-\epsilon_k]$ will be defined later. We also confirm that $z_k\in H^{1}(\Omega)$. Observe that the definition of $z_k$ extends to $\Omega \cup (S_\gamma)_{a_k+b_k}$. We utilize this property later. During the course of the proof, we will choose the constants $a_k, b_k,\eta_k > 0$ such that $\lim_{k\to \infty}  a_k = 0$,
    \begin{equation}\label{eq:condbk0} 
        \lim_{k\to \infty}  k^2b_k = 0,\quad\text{and}\quad \eta_k \defeq \tfrac{1}{k} \sqrt{\int_0^1(1-s^2)^k ds}.
    \end{equation}
    With these choices \cref{eq:recoas} gives 
    \begin{equation}\label{eq:condbk}
        \limsup_{k\to \infty} k^2\Lmeasof(S_\gamma)_{b_k}= 2k^2b_k \Lmeasof(S_\gamma)_{b_k}/(2b_k)=0 \cdot 2L = 0.
    \end{equation} 
    Later we take a specific choice for $a_k$ which reveals the reasoning for the selection of $\eta_k$.

    For the reconstruction sequence of $\gamma$, we take
    \begin{equation}\label{eq:greco}
        \gamma_k \defeq (1-\Psi_k)\gamma + \gamma_m \Psi_k,
    \end{equation}
    for $\Psi_k \in H^1(\Omega)$ satisfying
    $$
        0 \le \Psi_k \le 1,\quad \Psi_k = 1 \text{ on } (S_\gamma)_{b_k/2},\quad\text{and}\quad \Psi_k = 0\text{ on } \Omega\backslash(S_\gamma)_{b_k}.
    $$
    This sequence differs from the one in \cite[page 1025]{ambrosio1990approximation} by the term $\gamma_m \Psi_k$ and by the fact that $\Psi_k$ is not necessarily in $ C^\infty_0(\R^N)$.

    Now, since $ \gamma_k \in [\gamma_m,\gamma_M]$ and $\Lmeasof(S_\gamma)_{b_k/2} \to 0$ due to \eqref{eq:condbk}, we  have $ \gamma_k =  \gamma $ on $\Omega\backslash(S_\gamma)_{b_k}$, $ \gamma_k \to \gamma $ in measure. Further, since $\gamma \in H^1(\Omega\backslash S_\gamma) \cap L^{\infty}(\Omega\backslash S_\gamma)$ and $\Psi_k \in H^1(\Omega) \cap L^\infty(\Omega)$, have $\gamma_k\in H^{1}(\Omega)$ \cite[Theorem 1.49]{KinunenMaI}. By the same theorem, $\gamma \Psi_k$ satisfies Leibniz rule. 

    We follow the Ambrosio's and Tortorelli's proof, however, the bound $z_k\le 1-\epsilon_k$ introduces additional steps. With $(\gamma_k,z_k)$ given by \eqref{eq:greco} and \eqref{eq:zk}, extending $z_k$ to $\Omega \cup (S_\gamma)_{a_k+b_k}$, and estimating $\int_\Omega \frac{1}{4}(\alpha k z)^2 \ddd\Lmeas \le \int_{\Omega \cup (S_\gamma)_{a_k+b_k}} \frac{1}{4}(\alpha k z)^2 \ddd\Lmeas$, etc., we expand
    \begin{equation}\label{eq:expFk}
        \begin{aligned}[t]
            \bar F_k(\gamma_k,z_k) \le& \int_{\Omega\backslash (S_\gamma)_{a_k+b_k}} \abs{\nabla \gamma}^2(1- \eta_k^2)^{2k}  \ddd\Lmeas + \int_{ ((S_\gamma)_{a_k+b_k} \backslash (S_\gamma)_{b_k}) \cap \Omega} \abs{\nabla \gamma}^2(1- z_k^2)^{2k}  \ddd\Lmeas\\
            &+ \int_{ ((S_\gamma)_{b_k}\backslash (S_\gamma)_{b_k/2}) \cap \Omega} \abs{\nabla  \gamma_k}^2(1- (1 - \epsilon_k)^2)^{2k}  \ddd\Lmeas\\
            &+ \tfrac{1}{4} k^2\eta_k^2\Lmeasof(\Omega\backslash(S_\gamma)_{a_k+b_k}) + \tfrac{1}{4}(1-\epsilon_k)^2 k^2\Lmeasof((S_\gamma)_{b_k}) + A_k( z_k),
        \end{aligned}
    \end{equation}
    where the cost of transitioning from $\eta_k$ to $1-\epsilon_k$ in the neighborhood of $S_\gamma$ is presented by the term 
    $$
        A_k(z) \defeq  \int_{ (S_\gamma)_{a_k+b_k}\backslash(S_\gamma)_{b_k}}  \abs{\nabla z}^2(1- z^2)^{2k} + \frac{1}{4}( k z)^2 \ddd\Lmeas.
    $$ 
    
    By closely following the proof of \cite[Proposition 5.1]{ambrosio1990approximation}, since $\eta_k \to 0$ and $(1- \eta_k^2)^{2k} \to 1$ and since \eqref{eq:recoas} implies that $\Omega \cap \bar{S}_\gamma$ is negligible, i.e. it has a zero (Lebesgue) measure, the first term of the RHS converges to $\int_{\Omega} \abs{\nabla \gamma_k}^2  \ddd\Lmeas$. Further, using \eqref{eq:recoas}, \eqref{eq:condbk}, \eqref{eq:condbk0} (i.e. $b_k\to 0$), $a_k\to 0$, we get
    $$
        0 
        \le \limsup\limits_{k\to \infty} \Lmeasof((S_\gamma)_{a_k+b_k}\backslash (S_\gamma)_{b_k})
        \le  \limsup\limits_{k\to \infty} (a_k+b_k)\frac{\Lmeasof((S_\gamma)_{a_k+b_k})}{a_k+b_k} 
        =0\cdot 2L=0.
    $$
    Consequently, $\Lmeasof((S_\gamma)_{a_k+b_k} )\to 0$. The facts $\abs{\nabla \gamma} \in L^2(\Omega)$ and $\Lmeasof((S_\gamma)_{a_k+b_k}\backslash(S_\gamma)_{b_k} )\to 0$ imply that (see \cref{lemma:nunuk} and \cite[Corollary 16.9]{jost2006postmodern})
    $$
        0
        \le\int_{ (S_\gamma)_{a_k+b_k} \backslash (S_\gamma)_{b_k}} \abs{\nabla \gamma}^2(1- z_k^2)^{2k}  \ddd\Lmeas 
        \le \int_{ (S_\gamma)_{a_k+b_k} \backslash (S_\gamma)_{b_k}} \abs{\nabla \gamma}^2 \ddd\Lmeas \to 0.
    $$
    The condition \eqref{eq:condbk} shows that the fifth term of \eqref{eq:expFk} also vanishes. The fourth term may be estimated above by $k^2\eta_k^2\Lmeasof(\Omega)$, which also tends to zero by the definition of $\eta_k$ in \eqref{eq:condbk0}. Thus we only have the third and the last term to estimate.  

    We next show that $\limsup_{k\to \infty}A_k( z_k) \le L$. For simplicity we write 
    $$
      H_k \defeq (S_\gamma)_{a_k+b_k}\backslash(S_\gamma)_{b_k},\;\tau(x) \defeq \dist(x,S_\gamma),
    \;
    \text{and} 
    \;
        \mathscr{H}(t) \defeq \mathscr{H}^{N-1}(\lbrace y \in \Omega\mid \tau(y) = t\rbrace). 
    $$
    We now take our specific choice of $\hat z_k$ (see \cite[1028]{ambrosio1990approximation}), defined through the single variable function
    \begin{equation}\label{eq:theta}
        \hat z_k(t) \defeq \theta_k(a_k + b_k - t),\quad t\in[b_k,a_k+b_k],
    \end{equation}
    that we parametrize through the distance $t=\tau(y)$ from the jumpset $S_\gamma$. The function $\theta_k$, in turn, we define as the solution of the differential equation
    \begin{equation}\label{eq:gradtheta}
        \nabla \theta_k = \frac{k\theta_k}{2(1-\theta_k^2)^k},\quad\theta_k(0) =\eta_k. 
    \end{equation}
    Observe that $A_k$ only depends on $z_k$ via $\hat z_k$. By using the co-area formula, the fact that $\abs{\nabla \tau} = 1$ a.e. \cite[Theorem 3.2.12 and Lemma 3.2.34]{federer2014geometric}, also \cite[pages 1026-1028]{ambrosio1990approximation}, we can expand $A_k( z_k)$ as
    \begin{equation}\label{eq:Akt}
        \begin{aligned}[t]
            A_k( z_k) 
            &= \int_{b^k}^{a_k + b_k} \int_{\lbrace y \mid \tau(y) = t\rbrace}\left(\left[\abs{\nabla z_k(y)}^2(1-  z_k^2(y))^{2k} + \tfrac{1}{4}k^2 z_k^2(y)\right] d\mathscr{H}^{N-1}(y) \right) dt \\ 
            &=  \int_{b^k}^{a_k + b_k} \left[\abs{\nabla \hat z_k(t)}^2(1- \hat z_k^2(t))^{2k} + \tfrac{1}{4}k^2\hat z_k^2(t)\right] \left( \int_{\lbrace y \mid \tau(y) = t\rbrace}  d\mathscr{H}^{N-1}(y) \right) dt\\
            &= \int_{b^k}^{a_k + b_k} \left[\abs{\nabla \hat z_k(t)}^2(1- \hat z_k^2(t))^{2k} + \tfrac{1}{4}k^2\hat z_k^2(t)\right] \mathscr{H}(t) dt.
        \end{aligned}
    \end{equation}
    In the following, since $\eta_k\to 0$, we may assume that $k$ is large enough so that $\eta_k \le 1-\epsilon_k$. By \eqref{eq:zk} and \eqref{eq:theta}, we have that $\hat z_k(a_k + b_k) = \theta_k(0) = \eta_k$ and $\hat z(b_k) = \theta_k(a_k) = 1-\epsilon_k$. Now dividing \eqref{eq:gradtheta} by its own right-hand side and separating the differential equation gives an implicit formula,%
    $$
    \begin{aligned}[t]
        \int_0^{t} 1 du = \tfrac{2}{k} \int_{\eta_k}^{\theta_k(t)}\tfrac{1}{s} (1-s^2)^k ds.
    \end{aligned}
    $$
    \noindent Further, using $\theta_k(a_k) = 1-\epsilon_k$ gives
    $$
    \begin{aligned}[t]
        a_k &= \int_0^{a_k} 1 du = \tfrac{2}{k} \int_{\eta_k}^{1-\epsilon_k}\tfrac{1}{s} (1-s^2)^k ds \le  \tfrac{2}{k\eta_k}  \int_{\eta_k}^{1-\epsilon_k} (1-s^2)^k ds\\ 
        &\le \tfrac{2}{k\eta_k}  \int_{0}^1 (1-s^2)^k ds = 2  \sqrt{\int_{0}^1 (1-s^2)^k ds},
    \end{aligned}
    $$
    where on the last equality we used the definition of $\eta_k$. By change of variables and Hölder's inequality it is easy to see that $a_k \to 0$. Also note that as $\hat z_k$ is decreasing, $\hat z_k \in [\eta_k,1-\epsilon_k]$. 

    Using \eqref{eq:gradtheta} gives
    $
        (1-\hat z_k^2)^k \nabla \hat z_k = - k \hat z/2,
    $
    and plugging this into \eqref{eq:Akt} gives
    \begin{equation}\label{eq:Akzk}
        \begin{aligned}[t]
            A_k( z_k) &= \int_{b^k}^{a_k + b_k} \left[\abs{\nabla \hat z_k(t)}^2(1- \hat z_k^2(t))^{2k} + \tfrac{1}{4}k^2\hat z_k^2(t)\right] \mathscr{H}(t) dt 
            = \tfrac{k^2}{2} \int_{b^k}^{a_k + b_k} \hat z_k^2(t) \mathscr{H}(t) dt.
        \end{aligned}
    \end{equation}
    Similarly to \cite{ambrosio1990approximation}, in the following, we will denote terms that vanish as $k \to \infty$ by $o(k)$ and define
    \begin{equation}\label{eq:deffA}
        \A(t) \defeq \Lmeasof(\lbrace x\in \Omega \mid \dist(x,S_\gamma) < t\rbrace).
    \end{equation}
    By \cite[Identity 2.6]{ambrosio1990approximation}, $\A \in W_\text{loc}^{1,1}((0,\infty))$ and $\nabla \A = \mathscr H$ almost everywhere (i.e. $\A(s) = \int_0^s \mathscr{H}(t)dt$), so that integration by parts gives
    $$
        A_k( z_k) = \tfrac{k^2}{2}\left( \hat z_k^2(t)\A(t)\Big|_{b_k}^{a_k+b_k} -2 \int_{b_k}^{a_k+b_k} \hat z_k(t) (\nabla\hat  z_k(t))\A(t) dt \right)
    $$
    Recalling that $\hat z_k(b_k) = 1-\epsilon_k$ and $\hat z_k(a_k+b_k)= \eta_k$,
    $$
    \begin{aligned}[t]
        A_k( z_k) &=  \tfrac{k^2}{2}\left( \eta_k^2\A(a_k+b_k) - (1-\epsilon_k)^2\A(b_k) -2 \int_{b_k}^{a_k+b_k} \hat z_k (\nabla\hat  z_k(t))\A(t) dt \right) \\
        &\le -k^2\int_{b_k}^{a_k+b_k} \hat z_k(t) (\nabla\hat  z_k(t))\A(t) dt + o(k),
    \end{aligned}
    $$
    where the term $\eta_k^2\A(a_k+b_k)$ is $o(k)$ due to the definition of $\eta_k$ and $- (1-\epsilon_k)^2\A(b_k) \le 0$ (and $o(k)$).

    Further, \eqref{eq:recoas} ensures the existence of $\omega_k \to 0$ such that $\A(t) \le 2t(L+\omega_k)$ for all $t\in[0, a_k + b_k]$.
    Indeed, take $\omega_k \defeq \beta_k -L$ for $\beta_k \defeq \sup_{s \in [0,a_k+b_k]} \A(s)/(2s)$. Since $t\in [0,a_k+b_k]$, $\A(t)/(2t) \le \beta_k$, moreover, since $a_k+b_k \to 0$, by \eqref{eq:recoas} $\lim_{k\to\infty} \beta_k=\limsup_{k\to\infty} \A(a_k+b_k)/(2(a_k+b_k))=L$. Thus $\omega_k \to 0$. Now $\A(t)/(2t) - \beta_k \le 0$ if and only if $A(t) \le 2t(L+\omega_k)$. Since $\A(t) \le 2t(L+\omega_k)$ for all $t\in[0, a_k + b_k]$ and since $\nabla\hat z_k(t) \le 0$ for a.e.~$t\in[b_k,a_k+b_k]$,
    $$
        A_k( z_k) \le -2(L+\omega_k)k^2\int_{b_k}^{a_k+b_k} \hat z_k(t) (\nabla\hat  z_k(t))t dt + o(k),
    $$
    and integration by parts gives
    $$
    \begin{aligned}[t]
        A_k( z_k) &\le (L+\omega_k)k^2 \left(-\eta_k^2(a_k+b_k) + (1-\epsilon_k)^2b_k + \int_{b_k}^{a_k+b_k} \hat z_k^2(t) dt \right) + o(k) 
        \\
        &= (L+\omega_k)k^2\int_{b_k}^{a_k+b_k} \hat z_k^2(t) dt + o(k),
    \end{aligned}
    $$
    where the first term is clearly below zero and the second term is $o(k)$ due to the definition of $b_k$. Finally plugging in $k\hat z_k = -2(\nabla \hat z_k)(1-\hat z_k^2)^k$ from \eqref{eq:gradtheta} gives
    $$
        \begin{aligned}[t]
            A_k( z_k) \le & (L+\omega_k)k\int_{b_k}^{a_k+b_k}  2 \hat z_k(t)(-\nabla \hat z_k(t))(1-\hat z_k^2(t))^k dt + o(k) \\
            &=  \frac{(L+\omega_k)k}{k+1} (1-\hat z_k^2(t))^{k+1}\Big|_{b_k}^{a_k+b_k} + o(k)
            \\
            &= \frac{(L+\omega_k)k}{k+1}\left(  (1- \eta_k^2)^{k+1} -(\epsilon_k(2-\epsilon_k))^{k+1}   \right) + o(k).
        \end{aligned}
    $$
    Since $\omega_k\to 0$, $k/(k+1)\to 1$, $(1-\eta_k^2)^{k+1} \to 1$, and $(\epsilon_k(2-\epsilon_k))^{k+1} \to 0$, we have confirmed that $\limsup_{k\to \infty} A( z_k) \le L$. 
    Since $\norm{z_k}_2$ and $\norm{\nabla z_k}_2$ are bounded, $ z_k \in H^1(\Omega)$. Further, by the same reasoning as we used in \Cref{thm:FGaexists}, also $\phi \circ z_k \in H^1(\Omega)$.

    Next we show that also the third term in \eqref{eq:expFk} vanishes. Denote $K_k \defeq (S_\gamma)_{b_k}\backslash (S_\gamma)_{b_k/2}$. By construction $z_k=1-\epsilon_k$ on $(S_\gamma)_{b_k}$. Thus using \eqref{eq:greco}, Leibniz rule, and Young's inequality gives
    \begin{equation}\label{eq:gammakapp}
        \begin{aligned}[t]
            \int_{ K_k \cap \Omega} \abs{\nabla  \gamma_k}^2(1-z_k^2)^{2k}  \ddd\Lmeas 
            & = \int_{ K_k\cap \Omega} \abs{\nabla  \gamma_k}^2(2\epsilon_k -\epsilon_k^2 )^{2k}  \ddd\Lmeas \\ 
            & \le \int_{ K_k\cap \Omega} 2(\abs{\nabla \gamma}^2 (1-\Psi_k)^2 + (\gamma_m-\gamma)^2\abs{\nabla  \Psi_k}^2 )(2\epsilon_k -\epsilon_k^2 )^{2k}   \ddd\Lmeas,\\
            & \le \int_{ K_k\cap \Omega} 2(\abs{\nabla \gamma}^2 + (\gamma_M-\gamma_m)^2\abs{\nabla  \Psi_k}^2 )(2\epsilon_k -\epsilon_k^2 )^{2k}   \ddd\Lmeas.
        \end{aligned}
    \end{equation}
    Since by \eqref{eq:recoas} $\abs{\nabla\gamma} \in L^2(K_k\cap \Omega)$, and since $\Lmeasof((S_\gamma)_{b_k})\to 0$, by \cite[Corollary 16.9]{jost2006postmodern}, $\int_{K_k\cap \Omega} 2\abs{\nabla  \gamma}^2(2\epsilon_k -\epsilon_k^2 )^{2k}  \ddd\Lmeas \to 0$, regardless of $\epsilon_k \in [0,1)$. To simplify the notations we take $c \defeq 2(\gamma_M-\gamma_m)^2$ and $\ee \defeq 2\epsilon_k -\epsilon_k^2$.

    Similarly to the construction of $\hat z_k$, we choose $\Psi_k$ to be only a function of $\tau$ on $K_k$, $\Psi_k(y) = \hat \Psi_k(\tau(y))$ for some $\hat \Psi_k$. Again using the co-area formula and $\abs{\nabla \tau(x)} = 1$ a.e., we again write
    \begin{equation}\label{eq:PsiH}
        \begin{aligned}[t]
            \int_{ K_k\cap \Omega} \abs{\nabla \Psi_k(x)}^2 \ee^{2k}  \ddd\Lmeas  \le \int_{ K_k} \abs{\nabla \Psi_k(x)}^2 \ee^{2k}  \ddd\Lmeas =   \ee^{2k}\int_{b^k/2}^{b_k} \abs{\nabla \hat \Psi_k(t)}^2 \mathscr{H}(t) dt
    \end{aligned}
    \end{equation}
    where $\hat \Psi_k$ is a locally Lipschitz function only depending on $t$. Before writing explicit formula for $\Psi_k$, let us examine the sequence $b_k$. Note that $b_k$ is an arbitrary sequence that satisfies \eqref{eq:condbk0}. It is easy to see that this is satisfied by $b_k = 1/k^{2+\delta}$ with $\delta > 0$, as soon as $k > k_0$ with large enough $k_0$ so that $(S_\gamma)_{b_k} \subset \Omega$. We will fix $b_k \defeq 1/k^{2+\delta}$ but we let $\delta > 0$ to be arbitrary. We also assume that $k \ge k_0$. 

    Now recall that $\Psi_k = 1$ on $ (S_\gamma)_{b_k/2}$, $\Psi_k = 0$ on $\Omega\backslash(S_\gamma)_{b_k}$, $ \Psi_k \in H^1(\Omega)$, and further, $\abs{\nabla \Psi_k}^2$ has to be locally Lipschitz due to the usage of the co-area formula. These conditions imply that $\nabla \hat \Psi_k \vert_{\partial (S_\gamma)_{b_k/2}} = \nabla \hat \Psi_k \vert_{\partial (S_\gamma)_{b_k}} = 0$ a.e. and thus a simple choice that satisfies them is a piecewise polynomial function
    \begin{equation}\label{eq:hPsi}
        \hat \Psi_k(t) \defeq 
        \begin{cases} 
         1,& t \in [0,b_k/2)\\ 
        (4(b_k - t)^2(4t- b_k ))/b_k^3,& t\in [b_k/2,b_k], \\ 
         0,& \text{otherwise.}
        \end{cases}
    \end{equation}
    Again, since $\abs{\nabla \tau(x)} = 1$ a.e., $\abs{\nabla \hat \Psi_k(t)} = (24(-b_k^2+3b_kt-2t^2))/b_k^3$
     a.e. when $t\in [b_k/2,b_k]$. This polynomial has maximum at $\tfrac{3}{4}b_k$, thus $\norm{\abs{\nabla \hat \Psi_k}^2}_{L^\infty([b_k/2,b_k])} = 9/b_k^2$. 
     Then
    \begin{equation}\label{eq:Phisup}
        \begin{aligned}[t]
            c \ee^{2k}\int_{b^k/2}^{b_k} \abs{\nabla \hat \Psi_k(t)}^2 \mathscr{H}(t) dt \le \frac{9c \ee^{2k}}{b_k^2}\int_{b^k/2}^{b_k} \mathscr{H}(t) dt \le \frac{9c \ee^{2k}}{b_k^2}\int_{0}^{b_k} \mathscr{H}(t) dt = \frac{9c \ee^{2k}}{b_k^2} \A(b_k). 
        \end{aligned}
    \end{equation}
    since $\nabla \mathcal A = \mathscr H$ a.e., and since by \eqref{eq:recoas} $\A(0)=0$. Also by \eqref{eq:recoas}, $\limsup_{k\to\infty}\A(b_k)/b_k = L$, meaning that need to only consider the term $\ee^{2k}/b_k$. 
    Now let $n \in \N$ so that $n \ge \delta$. Then $1/b^k =k^{2+\delta} \le k^{2+n}$. Since $\epsilon_k < 1$, also $\ee < 1$, an application of L'H\^{o}spital rule shows $\ee^{2k}/k^{-2-n} \to 0$ as $k\to\infty$, i.e.,
    $$
        \limsup_{k\to\infty}\ee^{2k}c\int_{ K_k} \abs{\nabla \Psi_k}^2  \ddd\Lmeas \le \limsup_{k\to\infty} 9c{k^{2+n}\ee^{2k}}\A(b_k)/{b_k}  = 9c (0 \cdot L)=0 .
    $$
    Since all of the additional terms in $\bar F_k( \gamma_k,  z_k)$ vanish, we have shown that
    $$
        \limsup_{k\to\infty} \bar F_k( \gamma_k,  z_k) = \limsup_{k\to\infty} F_k(\gamma_k, z_k) \le F(\gamma,0),
    $$
    as $\epsilon_k \downto 0$. This finishes the proof.
\end{proof}
\noindent \Cref{lemma:Felimsup} yields a reconstruction sequence for any $\gamma \in \SBV(\Omega)$ for which the Minkowski upper limit
$$
    L(S_{\gamma}) \defeq \limsup_{\rho \downto 0} \frac{\Lmeasof(\lbrace x \in \Omega \mid \dist(x,S_\gamma) < \rho \rbrace)}{2\rho}
$$ 
satisfies $L(S_\gamma)\le \mathscr{H}^{N-1}(S_\gamma)$ or $F(\gamma) = \infty$.
Next we extend this to $\SBV(\Omega)$ with arguments similar to \cite[Propositions 5.2-3]{ambrosio1990approximation}.

\begin{lemma}\label{corol:GSBV}
    Suppose that $\alpha > 0$, $\Omega \subset \R^N$ is open and bounded, and that \cref{Ocond} holds. Then given any $\gamma \in SBV(\Omega)$ there exists a reconstruction sequence $\lbrace(\gamma_k,z_k) \rbrace_{k \in \N} \subset \bar D_k(\Omega)$ with $\epsilon_k \downto 0$, such that \eqref{eq:Gamma2} holds.
\end{lemma}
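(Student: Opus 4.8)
The plan is to derive \cref{corol:GSBV} from \cref{lemma:Felimsup} by an approximation-and-diagonalisation argument, following the scheme of \cite[Propositions 5.2--5.3]{ambrosio1990approximation}. First I would dispose of the trivial case: if $F(\gamma,0)=\infty$, then \eqref{eq:Gamma2} holds for any sequence and it suffices to exhibit \emph{some} $\lbrace(\gamma_k,z_k)\rbrace\subset\bar D_{k,N}(\Omega)$ converging to $(\gamma,0)$ in measure, which for $\gamma\in[\gamma_m,\gamma_M]$ is provided by mollifications truncated to $[\gamma_m,\gamma_M]$ together with $z_k\equiv c/k$. So assume $F(\gamma,0)<\infty$, i.e.\ $\gamma\in\SBV(\Omega)\cap[\gamma_m,\gamma_M]$, $\abs{\nabla\gamma}\in L^2(\Omega)$ and $\mathscr{H}^{N-1}(S_\gamma)<\infty$. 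The gap left by \cref{lemma:Felimsup} is that the bound \eqref{eq:Gamma21} only controls $\limsup_k\bar F_k(\gamma_k,z_k)$ by $\int_\Omega\abs{\nabla\gamma}^2\ddd\Lmeas+L(S_\gamma)$ with $L(S_\gamma)$ the upper Minkowski content, which for a general jump set may strictly exceed $\mathscr{H}^{N-1}(S_\gamma)$, so a direct application does not yet yield \eqref{eq:Gamma2}.

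To close this gap I would invoke the $\SBV$-density result behind \cite[Proposition 5.2]{ambrosio1990approximation} (see also \cite{ambrosio2000functions}): under \cref{Ocond} — which lets one reflect $\gamma$ across $\partial\Omega$ and therefore also approximate the part of $S_\gamma$ meeting the boundary — there exist $\gamma^j\in\SBV(\Omega)$ whose jump sets $S_{\gamma^j}$ are polyhedral (contained, up to $\mathscr{H}^{N-1}$-null sets, in finitely many $(N-1)$-simplices), with $\gamma^j\to\gamma$ in $L^1(\Omega)$, hence in measure, and with $\int_\Omega\abs{\nabla\gamma^j}^2\ddd\Lmeas\to\int_\Omega\abs{\nabla\gamma}^2\ddd\Lmeas$ and $\mathscr{H}^{N-1}(S_{\gamma^j})\to\mathscr{H}^{N-1}(S_\gamma)$, so that $\limsup_j F(\gamma^j,0)\le F(\gamma,0)$. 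Replacing each $\gamma^j$ by its truncation to $[\gamma_m,\gamma_M]$ keeps it in $\SBV(\Omega)\cap[\gamma_m,\gamma_M]$, leaves $S_{\gamma^j}$ polyhedral up to null sets, and only decreases $\abs{\nabla\gamma^j}$ and $\mathscr{H}^{N-1}(S_{\gamma^j})$, so all the conclusions above persist; and for a polyhedral set one has $L(S_{\gamma^j})=\mathscr{H}^{N-1}(S_{\gamma^j})$. Then \cref{lemma:Felimsup}, applied to each $\gamma^j$ with the same $\epsilon_k\downto0$, produces $\lbrace(\gamma^j_k,z^j_k)\rbrace_{k\in\N}\subset\bar D_{k,N}(\Omega)$ with $(\gamma^j_k,z^j_k)\to(\gamma^j,0)$ in measure as $k\to\infty$ and $\limsup_k\bar F_k(\gamma^j_k,z^j_k)\le\int_\Omega\abs{\nabla\gamma^j}^2\ddd\Lmeas+L(S_{\gamma^j})=F(\gamma^j,0)$.

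Finally I would diagonalise. Since $\Omega$ has finite Lebesgue measure, convergence in measure on $\mathscr{B}(\Omega)\times\mathscr{B}(\Omega)$ is induced by a metric $d$; by the standard diagonal lemma for $\Gamma$-limsup inequalities (see \cite{braides2002gamma}), applied to the doubly indexed quantities $\bar F_k(\gamma^j_k,z^j_k)$ with the outer estimate $\limsup_j\limsup_k\bar F_k(\gamma^j_k,z^j_k)\le\limsup_j F(\gamma^j,0)\le F(\gamma,0)$, there is a nondecreasing $k\mapsto j(k)$ with $j(k)\to\infty$ such that, setting $(\gamma_k,z_k)\defeq(\gamma^{j(k)}_k,z^{j(k)}_k)\in\bar D_{k,N}(\Omega)$, one has $d\bigl((\gamma_k,z_k),(\gamma,0)\bigr)\le d\bigl((\gamma^{j(k)}_k,z^{j(k)}_k),(\gamma^{j(k)},0)\bigr)+d\bigl((\gamma^{j(k)},0),(\gamma,0)\bigr)\to0$ and $\limsup_k\bar F_k(\gamma_k,z_k)\le F(\gamma,0)$. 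As the index $k$ of $\bar F_k$ is never altered, the requirement $\epsilon_k\downto0$ is untouched, and $(\gamma_k,z_k)$ is the reconstruction sequence verifying \eqref{eq:Gamma2}.

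I expect the main obstacle to be the $\SBV$-approximation step: constructing $\gamma^j$ with polyhedral jump sets, convergent in measure, with convergent $F$-values, and compatible with the constraint $\gamma\in[\gamma_m,\gamma_M]$ — in particular handling correctly the portion of $S_\gamma$ touching $\partial\Omega$, which is exactly where \cref{Ocond} is used. This is essentially \cite[Propositions 5.2--5.3]{ambrosio1990approximation} together with the elementary observation that truncation to $[\gamma_m,\gamma_M]$ can only decrease the quantities involved; the polyhedral Minkowski-content identity and the diagonalisation are routine.
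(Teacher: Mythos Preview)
Your overall strategy --- reduce to $F(\gamma,0)<\infty$, approximate $\gamma$ by functions whose jump sets have Minkowski content controlled by their Hausdorff measure, apply \cref{lemma:Felimsup} to each approximant, and diagonalise using metrisability of convergence in measure --- is correct and coincides structurally with the paper's proof. The difference is in the approximation step. The paper follows \cite[Proposition~5.3]{ambrosio1990approximation} verbatim: the approximants $v_\ell$ are taken to be \emph{minimisers} of the penalised Mumford--Shah problem $\min_{v}\int_{\Omega'}\abs{\nabla v}^2+\mathscr{H}^{N-1}(S_v)+\ell\int_{\Omega'}\abs{v-\gamma}^2$ on the reflected domain $\Omega'=\Omega\cup U$, for which the De~Giorgi--Carriero--Leaci regularity theory yields $L(S_{v_\ell})-\mathscr{H}^{N-1}(S_{v_\ell})\to 0$; the constraint $v_\ell\in[\gamma_m,\gamma_M]$ then comes for free from $\gamma\in[\gamma_m,\gamma_M]$ via a maximum-principle argument (truncating a minimiser only lowers the functional). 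You instead invoke a polyhedral $\SBV$-approximation and truncate afterwards. This is also a valid route, but two remarks: (i) polyhedral density is not quite what \cite[Proposition~5.2]{ambrosio1990approximation} provides --- it is closer to the later Cortesani--Toader theorem --- so your citation is slightly off; (ii) truncation need not keep $S_{\gamma^j}$ literally polyhedral, but this does not matter since what you actually use is $L(S_{\gamma^j_{\mathrm{trunc}}})\le L(S_{\gamma^j})=\mathscr{H}^{N-1}(S_{\gamma^j})$, and it is the pre-truncation $F(\gamma^j,0)$ that converges to $F(\gamma,0)$. The paper's route has the merit of reusing exactly the tools already imported from \cite{ambrosio1990approximation}; yours is arguably more transparent but imports an additional density result.
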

\begin{proof}
    We can focus on the case $\gamma \in \SBV(\Omega) \cap [\gamma_m,\gamma_M]$ as otherwise $F(\gamma) = \infty$. 
    We define the class of functions
    \begin{equation*}
        \begin{aligned}
            \mathscr F(\Omega)\defeq
            &\left\lbrace \gamma \in \GSBV(\Omega) \mid 
             F(\gamma,0) = \infty
            \text{ or }
            \text{there exists } \lbrace v_\ell \rbrace_{\ell\in\N} \subset SBV(\Omega)\cap L^\infty(\Omega),\right.\\
            & \lim_{\ell\to\infty}L(S_{v_\ell}) - \mathscr H^{N-1}(S_{v_\ell}) = 0 \text{, } \tilde F(v_\ell,0) \to \tilde F(\gamma,0),\\
            &\left. \text{and } v_\ell \to \gamma\text{ in measure.}
            \right\rbrace
        \end{aligned}
    \end{equation*}
    where $\tilde F$ is the Mumford-Shah functional without the constraints of $\gamma$. Since \cref{Ocond} holds, \cite[Proposition 5.3]{ambrosio1990approximation} gives $\mathscr F(\Omega) \cap [\gamma_m,\gamma_M] = \SBV(\Omega) \cap [\gamma_m,\gamma_M]$. Indeed, the sequences $\lbrace v_\ell \rbrace_{\ell\in N} \subset \SBV(\Omega) \cap L^\infty(\Omega)$ with $v_\ell \to \gamma \in \mathscr{F}(\Omega)$ that satisfy the conditions of $\mathscr{F}(\Omega)$ are solutions to
    \begin{equation}\label{eq:seqmin}
        \min_{v \in SBV(\Omega')} \int_{\Omega'} \abs{\nabla v}^2 dx + \mathscr{H}^{N-1}(S_v) + \ell\int_{\Omega'}\abs{v-\gamma}^2dx,
    \end{equation}
    where $\Omega' = \Omega \cup U$ for $U$ the neighbourhood from \cref{Ocond} and $\gamma$ is extended to $\Omega'$ be reflecting with $\phi$.
    Since $\gamma \in [\gamma_m,\gamma_M]$, also $v_\ell \in [\gamma_m,\gamma_M]$, meaning that $\tilde F(\gamma) = F(\gamma)$ and $\tilde F(v_\ell) = F(v_\ell)$.

    Thus, for all $\gamma \in \SBV(\Omega) \cap [\gamma_m,\gamma_M]$ we can find a sequence $\lbrace v_\ell\rbrace_{\ell\in \N} \subset \SBV(\Omega) \cap [\gamma_m,\gamma_M]$ such that
    \begin{equation}
        \label{eq:diagonal-base}
        v_\ell \to \gamma \text{ in measure},
        \quad\lim_{\ell\to\infty}L(S_{v_\ell}) - \mathscr H^{N-1}(S_{v_\ell}) = 0,
        \quad\text{and}\quad F(v_\ell,0) \to F(\gamma,0).
    \end{equation}
    For each $\ell \in \N$, \cref{lemma:Felimsup} now gives a sequence $\lbrace(\tilde  v_{\ell,j},\tilde z_{\ell,j}) \rbrace_{j\in\N} \subset \bar D_{k,N}(\Omega)$ such that
    \begin{equation}
        \label{eq:diagonal-start}
        (\tilde v_{\ell,j},\tilde z_{\ell,j}) \to (v_\ell,0)
        \quad\text{and}\quad
        \limsup_{j \to +\infty} \bar F_j(\tilde v_{\ell,j},\tilde z_{\ell,j})
        \le \int_\Omega \abs{\nabla v_\ell}^2 \ddd\Lmeas + L(S_{v_\ell}).
    \end{equation}
    Since convergence in measure is metrizable on bounded domains $\Omega$, a diagonal argument now establishes a diagonal sequence $\{(\gamma_k, z_k)\}_{k \in \N}$, obtained for some $\{(\ell_k, j_k)\}_{k \in \N}$ as $(\gamma_k, z_k) \defeq (\tilde v_{\ell_k,j_k},\tilde z_{\ell_k,j_k})$, satisfying the $\Gamma$-limsup inequality. Indeed, by the metrizability,
    $
        (\tilde v_{\ell_k,j_k},\tilde z_{\ell_k,j_k}) \to (\gamma,0)
    $
    while using \eqref{eq:diagonal-start} and the definition of $\mathscr F(\Omega)$ and finishing with \eqref{eq:diagonal-base} yields
    \[
        \begin{aligned}
        \limsup_{k \to +\infty} \bar F_j(\tilde v_{\ell_k,j_k},\tilde z_{\ell_k,j_k})
        &
        \le \limsup_{\ell \to +\infty}\left(  \int_\Omega \abs{\nabla v_\ell}^2 \ddd\Lmeas + L(S_{v_\ell})\right)
        \\
        &
        = \limsup_{\ell \to +\infty}\left(  \int_\Omega \abs{\nabla v_\ell}^2 \ddd\Lmeas + {\mathscr H}^{N-1}(S_{v_{\ell}})\right).
        \\
        &
        =  \limsup_{\ell \to +\infty} F(v_\ell,0)
        \\
        &
        = F(\gamma,0).
        \mbox{\qedhere}
        \end{aligned}
    \]
\end{proof}

\bibliographystyle{jnsao}
\bibliography{abbrevs,mumfordshaheit}

\end{document}